\def\.{\partial_t}
\def\:{\partial_{tt}}
\def\d{\partial}
\def\D{\Delta}
\def\g{\nabla}
\def\hf{\frac{1}{2}}
\def\a{\alpha}
\def\e{\varepsilon}
\def\eps{\varepsilon}
\def\f{\varphi}
\def\l{\lambda}
\def\la{\lambda}
\def\s{\sigma}
\def\t{\tau}
\def\O{\Omega}
\def\ga{\gamma}
\def\Ac{\mathfrak{A}}
\def\cA{\mathcal{A}}
\def\cB{\mathcal{B}}
\def\sB{\mathscr{B}}
\def\sD{\mathscr{D}}
\def\sK{\mathscr{K}}
\def\cL{\mathcal{L}}
\def\cE{\mathcal{E}}
\def\H20{\mathcal{H}_{2,0}}
\def\R{\mathbb{R}}
\def\cR{\mathcal{R}}
\def\cN{\mathcal{N}}
\def\cH{\mathcal{H}}
\def\cW{\mathcal{W}}
\def\cO{\mathcal{O}}
\def\N{\mathbb{N}}
\def\<{\langle}
\def\>{\rangle}
\def\8{\infty}
\newtheorem{lemma}{Lemma}[section]
\newtheorem{theorem}[lemma]{Theorem}
\newtheorem{remark}[lemma]{Remark}
\newtheorem{proposition}[lemma]{Proposition}
\newtheorem{corollary}[lemma]{Corollary}
\newtheorem{definition}[lemma]{Definition}
\newtheorem{assumption}[lemma]{Assumption}
\newcommand{\wrt}{{with respect to }}
\newenvironment{declaration}[1]{\trivlist
\item[\hskip \labelsep{\bf #1 }]\ignorespaces}{\endtrivlist}
\newenvironment{proofof}[1]{\begin{declaration}{#1}}{\hfill
$\square$\end{declaration}}
\newenvironment{proof}{\begin{proofof}{Proof.}}{\end{proofof}}
\begin{document}

\title{Long-time dynamics of Kirchhoff wave models \\ with strong nonlinear  damping}
\author{Igor
Chueshov\thanks{e-mail:
chueshov@univer.kharkov.ua} 
\\ \\
Department of Mechanics and Mathematics, \\ Kharkov
National University, \\ Kharkov, 61077,  Ukraine
 }
 \maketitle

\begin{abstract}
We study well-posedness and long-time dynamics of a class of
 quasilinear wave equations with 
a strong damping. We accept the Kirchhoff  hypotheses and assume that
the stiffness and damping coefficients are $C^1$ functions of the 
$L_2$-norm of the gradient of the displacement.  
We first prove the existence and uniqueness of weak solutions
and study their properties for a rather wide class of nonlinearities
which covers the case of possible degeneration (or even negativity)
of the  stiffness coefficient 
 and the case  of a supercritical source term.
Our main results deal with global attractors.
In the case of  strictly positive stiffness factors we prove that in 
the natural energy  space endowed with a partially strong topology
there exists  a global attractor whose 
fractal dimension is finite.
In the non-supercritical case the partially strong topology becomes
strong and  a finite dimensional 
attractor exists in the strong topology of the energy space.
Moreover, in this case we also establish  the existence of a fractal exponential 
 attractor and give conditions that guarantee the existence 
of a finite number of determining functionals.
Our arguments   involve a recently
developed   method based on ``compensated'' compactness and
quasi-stability estimates. 
\smallskip
\par\noindent 
{\bf  AMS 2010 subject classification:}
{\it Primary 37L30; Secondary 37L15, 35B40, 35B41.}
\smallskip\par\noindent
{\bf Keywords:} Nonlinear Kirchhoff wave  model; state-dependent nonlocal
 damping;  supercritical source; well-posedness; global attractor.
\end{abstract}
\section{Introduction}

In a bounded smooth domain $\O\subset\R^d$ we consider the following 
Kirchhoff wave model with a strong nonlinear damping: 
\begin{equation}\label{main_eq}
    \left\{ \begin{array}{l}
        \:u -\s(\|\g u\|^2)\D\.u - \phi(\|\g u\|^2)\D u+ f(u)=h(x),~~ x\in\O,\; t>0,\\ [2mm]
u|_{\d\O}=0,~~ u(0)=u_0, \quad \.u(0)=u_1.  \\
    \end{array} \right.
\end{equation}
Here $\D$ is the Laplace operator, $\s$ and $\phi$ are scalar
functions  specified later, $f(u)$ is a  given source term,
$h$ is a given function in $L^2(\O)$ and $\|\cdot\|$ is the norm in $L^2(\O)$.
\par 
This kind of wave
 models goes back to G. Kirchhoff ($d=1$, $\phi(s)=\f_0+\f_1 s$,
$\s(s)\equiv 0$, $f(u)\equiv 0$) and  has been studied by many authors
under different types of hypotheses. We refer to
\cite{bernstein40,Lions77,pochozhaev} and  to
 the literature
cited in the  survey ~\cite{MFM-survey2002},
see also \cite{CCFS98, Fan-Zh04,ghisi06,Hashimoto-Yam-jde07, MatIke96,Nakao-09,Nakao-Z-07,
Ono97a,Ono97b,Yang-Z07,ZY-W-jde2010,ZY-L-jmaa2010,
Yamazaki-jde05} and the references therein.
\par 
Our main goal in this paper is to study well-posedness and 
long-time dynamics of the problem  (\ref{main_eq}) under 
 the following set of  hypotheses:
\begin{assumption}\label{A1}
{\rm
\begin{enumerate}
  \item[{\bf (i)}]  The damping  ($\s$) and  the stiffness ($\phi$)
factors  are $C^1$  functions on the semi-axis $\R_+= [0,+\infty)$.
Moreover,  $\s(s)>0$ for all $s\in\R_+$  and there exist
$c_i\ge 0$ and $\eta_0\ge 0$  such that  
\begin{equation}\label{phi-si-coer}
\int_0^s \left[\phi(\xi)+\eta_0 \s(\xi)\right]d\xi \to+\infty~~
\mbox{as}~~
s\to+\infty 
\end{equation}
and
\begin{equation}\label{phi-s-si}
s\phi(s)+c_1\int_0^s \s(\xi)d\xi \ge -c_2~~\mbox{for}~~s\in\R_+.
\end{equation}
   \item[{\bf (ii)}] $f(u)$ is a $C^1$ function 
such that $f(0)=0$ (without loss of generality), 
\begin{equation}\label{f-coercive}
\mu_f:=\liminf_{|s|\to\infty}\left\{ s^{-1}f(s)\right\}>-\infty,
\end{equation}
and the following  properties hold:
(a) 
if $d=1$, then $f$ is arbitrary; (b) if $d=2$ then
\begin{equation*}%\label{f-crit0}
|f'(u)|\le C\left(1+|u|^{p-1}\right) ~~\mbox{for some}~~p\ge 1; 
\end{equation*}
(c) if $d\ge 3$ then either 
\begin{equation}\label{f-crit}
|f'(u)|\le C\left(1+|u|^{p-1}\right)\quad\mbox{with some}~~ 
1\le p\le p_*\equiv \frac{d+2}{d-2},
\end{equation}
or else
\begin{equation}\label{f-supercrit}
 c_0|u|^{p-1}  -c_1 \le f'(u)\le c_2\left(1+|u|^{p-1}\right) 
\quad\mbox{with some}~~p_*< p< p_{**}\equiv  \frac{d+4}{(d-4)_+},
\end{equation}
where $c_i$ are positive constants and $s_+=(s+|s|)/2$.
\end{enumerate}
}
\end{assumption}
\begin{remark}\label{re:relax}
{\rm {\bf (1)} 
The coercive behavior in (\ref{phi-si-coer})  and (\ref{phi-s-si}) holds
with $\eta_0=c_1=0$
if  we assume that $ \liminf_{s\to+\infty}\left\{ s \phi(s)\right\} >0$,
for instance. The standard  example 
is $\phi(s)=\phi_0+\phi_1s^\alpha$ 
with  $\phi_0\in\R$, $\phi_1>0$
and $\alpha\ge 1$. However we can also take    $\phi(s)$ 
with finite support, or even  $\phi(s)\equiv const\le0$.
In this case we need additional hypotheses 
concerning behavior  of $\s(s)$ as $s\to+\infty$.
We note that the physically justified situation 
(see, e.g., the survey \cite{MFM-survey2002}) corresponds to
the case when the stiffness coefficient $\phi(s)$ is 
positive almost everywhere. However we include 
into the consideration the case of possibly negative $\phi$
because  the argument we use 
to prove well-posedness involves  positivity properties
of $\phi$ in a rather mild form (see, e.g., (\ref{phi-si-coer}) and
(\ref{phi-s-si})). 
\par {\bf (2)} 
We note that in the case when $d\le 2$ or $d\ge 3$ and (\ref{f-crit})
holds with $p<p_*$ the Nemytski operator $u\mapsto f(u)$
is a locally Lipschitz mapping from the Sobolev space $H^1_0(\O)$
into $H^{-1+\delta}(\O)$ for some $\delta>0$.
If  $d\ge 3$ and (\ref{f-crit})
holds with $p=p_*$ this fact is valid with $\delta=0$.
These properties of the source nonlinearity $f(u)$ are of importance
in the study of wave dynamics with the strong damping
(see, e.g., \cite{CaCh02,ChDl06,PataZelik06,YaSu09} and the references 
therein). Below we refer to this situation as to 
non-supercritical (subcritical when $\delta>0$ and critical
for the case $\delta=0$).
To deal with the supercritical case (the inequality in 
(\ref{f-crit})  holds with $p>p_*$)
we borrow some ideas from \cite{KaZe09} and we need 
a lower bound for $f(u)$ of the same order as its upper bound 
(see the requirement in (\ref{f-supercrit})).
The second critical exponent $p_{**}$ arises  in the dimension 
$d\ge 5$ from 
the requirement $H^2(\O)\subset L_{p+1}(\O)$ which we need 
to estimate the source term in some negative Sobolev space,
see also Remark~\ref{re-mult} below. 
\par {\bf (3)} 
We also note that in the case (\ref{f-supercrit}) 
the condition in (\ref{f-coercive}) holds automatically
(with $\mu_f=+\infty$). 
This condition  can be relax depending 
on the properties of $\phi$. For instance, 
in the case when $\phi(s)=\phi_0+\phi_1s^\alpha$ 
with  $\phi_1>0$ instead of (\ref{f-coercive}) we can assume that
\[
f(s)s\ge -c_1 |s|^l-c_2\quad \mbox{for some $l\le 
 \min\{2\a+2-\eps, 2d/(d-2)_+\}$}
\]
with arbitrary small $\eps>0$.
Therefore for this choice of $\phi$ we need no coercivity assumptions
concerning $f$ in the non-supercritical case provided $p<2\a+1$.
However we do not pursue these possible generalizations and 
prefer to keep hypotheses concerning $\phi$ and $\s$ as general as possible.  
}
\end{remark}
\par
Well-posedness issues for Kirchhoff type models like 
(\ref{main_eq}) were studied intensively last years.
The main attention was paid the case when the strong damping 
term $-\s\D u_t$ is absent and the source term $f(u)$
 is either absent or subcritical. We refer to  \cite{ghisi06,Ono97a,Yamazaki-jde05}
and also to the survey \cite{MFM-survey2002}.
In these papers the authors have studied 
sets of initial data for which solutions exist and are unique.
The papers \cite{ghisi06,Ono97a} consider also the case
of a degenerate stiffness coefficient ($\phi(s)\sim s^\alpha$ near zero).
We also mention  the paper \cite{MatIke96} 
which deals with global existence (for a restricted class of initial data)
  in the case of   a strictly positive stiffness factor
of the form $\phi(s)=\phi_0+\phi_1 s^\a$ with
the nonlinear damping  $|u_t|^q u_t$ and 
the source term $f(u)=-|u|^p u$  for some range of exponents 
$q$ and $p$, see also the recent paper
\cite{taniguchi-jmaa2010} which is concentrated on the {\em local}
existence issue    for the same type of   damping 
and source terms but for 
 a wider range of the exponents $p$ and $q$.
 \par 
Introducing of the strong (Kelvin-Voigt) damping term $-\s\D u_t$
provides an additional a priori estimate and simplifies the 
issue. There are several well-posedness results  available 
in the literature for this case (see 
\cite{CCFS98, Med-Milla-M-1990,Nakao-Z-07,Ono97b,Yang-Z07,ZY-L-jmaa2010,ZY-W-jde2010}).
However all these publications assume that the damping coefficient 
$\s(s)\equiv \s_0>0$ is a constant and deal with a subcritical 
or absent source term. Moreover, all of them 
(except \cite{Ono97b}) assume that  stiffness  factor 
is non-degenerate  (i.e., $\phi(s)\ge \phi_0>0$). However
\cite{Ono97b} assumes small initial energy, i.e., deals with local (in phase
space) dynamics.
Recently  the existence and uniqueness of weak (energy) solutions 
of (\ref{main_eq}) was reported (without detailed proofs)
in \cite{KaZe09} for the case of supercritical source
satisfying (\ref{f-supercrit}). However the authors in \cite{KaZe09}
assume (in addition to our hypotheses) that  $d=3$, the damping
is linear (i.e.,\ $\s(s)=const>0$) and the stiffness factor $\phi$
is a uniformly positive $C^1$ function  
satisfying the inequality $\int_0^s\phi(\xi)d\xi\le s\phi(s)$
for all $s\ge 0$. As for {\em nonlinear  strong} damping to the 
best of our knowledge  there is only one publication   \cite{lazo08}. 
This paper deals with nonlinear  damping of the  form
$\s(\|A^\alpha u\|^2) A^\alpha u_t$ with $0<\alpha\le 1$. 
The main result of   \cite{lazo08} 
states only the existence of weak solutions 
for uniformly positive $\phi$ and $\s$ in the case when $f(u)\equiv 0$.
\par 
The main achievement of our well-posedness result is that
(a) we do not assume any kind of non-degeneracy conditions
concerning $\phi$ (this function may be  zero or even negative);
(b) we consider a nonlinear state-dependent strong damping and
 do not assume uniform positivity of the
damping factor $\s$; 
(c) we cover the cases of critical and supercritical
source terms $f$.

\smallskip 
\par 
Our second result deals with a global attractor 
for the dynamical system generated by (\ref{main_eq}).
There are many papers on stabilization 
to zero equilibrium  for Kirchhoff type models (see, e.g., 
\cite{APS-NA-RWA2009,CCFS98,MatIke96,MFM-survey2002}
and the references therein) and only
a few recent results devoted to (non-trivial) attractors
for systems like (\ref{main_eq}). We refer to \cite{Nakao-09} 
for  studies of local attractors in the case
of viscous damping
and to \cite{Fan-Zh04,Nakao-Z-07,Yang-Z07,ZY-W-jde2010,ZY-L-jmaa2010}
in the case of a strong {\em linear} damping (possibly perturbed by nonlinear
viscous terms).
 All these papers assume  
  subcriticality  of the  force $f(u)$ and deal with 
a uniformly positive 
stiffness coefficient of the form $\phi(s)=\phi_0+\phi_1 s^\a$ with $\phi_0>0$. 
In the long time dynamics context we can point only
the paper \cite{APS-NA-RWA2009}  which contains a result 
(see Theorem 4.4\cite{APS-NA-RWA2009}) on stabilization to zero
 in the case when $\phi(s)\equiv\s(s)= a+b s^\gamma$  with $a>0$
and possibly supercritical  source with the property
$f(u)u+a\mu u^2\ge 0$, where $\mu>0$  is small enough.
In this case the global attractor $\Ac=\{ 0\}$ is trivial. 
However this paper does not discuss well-posedness issues 
and assumes the existence of sufficiently  smooth solutions 
 as a starting point of
the whole considerations.
\par 
 Our main novelty is that
 we consider long-time dynamics for much more  general 
stiffness and damping coefficients
and cover the supercritical case.
Namely, under some additional non-degeneracy assumptions 
we prove the existence of a finite dimensional
 global attractor which uniformly attracts
trajectories in  a partially strong sense (see Definition~\ref{de:ss-top}). 
In the non-supercritical case this result can be improved:
we establish the convergence property with respect to strong topology of
the phase (energy) space. Moreover, in this case we prove the existence
of a fractal exponential attractor and give conditions for the existence
of finite sets of determining functionals.
To establish these results we rely on recently developed approach (see \cite{ChuLas_JDDE_2004} and also  \cite{ChuLas} and   
\cite[Chapters 7,8]{cl-book}) which  involves stabilizability estimates,
the notion of a quasi-stable system and also the idea 
of "short" trajectories
due to \cite{malek-ne,malek}.  In the supercritical case
to prove that the attractor has a finite dimension 
we also use a recent observation made in \cite{KaZe09} concerning
stabilizability estimate in the extended space.
In the non-supercritical case
we first prove that the corresponding system is quasi-stable 
in the sense of the  definition given in \cite[Section 7.9]{cl-book}
and then  apply the general theorems
on properties of quasi-stable  systems from this source.
\par 
We also note that
long-time dynamics of second order equations with nonlinear damping 
was studied by many authors. We refer to 
\cite{LasieckaBarbuRammaha,ChuKol, GattiPata_1D, Kolbasin,
 PataZelik_2D, PataZelik_3D} for the 
case of a damping   with
a displacement-dependent coefficient   and
to  \cite{ChuLas_JDDE_2004,ChuLas,cl-book} 
and to the references therein
for a velocity-dependent damping.
Models with different types of strong (linear) damping
in wave equations were considered in 
\cite{CaCh02,ChDl06,KaZe09,PataZelik06,YaSu09},
see also the literature quoted in these references. 
\par  
The paper is organized as follows.
 In Section~\ref{sect2} we introduce some notations and
prove Theorem ~\ref{pr:wp1} which 
provides us with well-posedness of our  model and 
contains some additional properties of solutions.
In Section~\ref{Sect3} we study long-time dynamics of the 
evolution semigroup $S(t)$ generated by (\ref{main_eq}).
We first establish some continuity properties of $S(t)$
(see Proposition~\ref{pr:gener}) and its   dissipativity
(Proposition~\ref{pr:dis}). These results do not require 
any non-degeneracy hypotheses concerning the stiffness coefficient $\phi$.
Then in the case of strictly positive $\phi$ we prove 
asymptotic compactness of $S(t)$ (see Theorem~\ref{th:ak}
and Corollary~\ref{co:ak}).
Our main results in Section~\ref{Sect3} state the existence of
 global
attractors and describe their properties 
in both the general case (Theorems~\ref{th1:attractor} and \ref{th:dim})
and the non-supercritical case (Theorems~\ref{th:attractor}
and \ref{th:exp-det}).

\section{Well-posedness}\label{sect2}
 We first describe some notations.
\par
Let  $H^\s(\O)$ be the  $L_2$-based Sobolev space
of the order $\s$ with the norm denoted by $\|\cdot\|_\s$
and
$H^\s_0(\O)$ is the completion of $C^\infty_0(\O)$ in $H^\s(\O)$
for $\s>0$.
Below we also
 denote by  $\|\cdot\|$ and $(\cdot,\cdot)$  the norm and the inner product
in $L_2(\O)$.
\par
In the space $H=L_2(\O)$ we introduce the operator
$\cA=-\Delta_D$ with the domain  
$$ \sD ( \cA)=\left\{ u\in H^2(\O) \, : \;
 u=0 ~~{\rm on}~~\O\right\} \equiv H^2(\O)\cap H^1_0(\O),
$$
where
$\Delta_D$ is the Laplace operator in $\O$ with the Dirichlet boundary conditions.
 The operator  $\cA$ is a  linear 
self-adjoint positive operator densely
defined on $H=L_2(\O)$. 
The resolvent of $\cA$ is compact in $H$.
Below we  denote by $\{e_k\}$ the orthonormal basis in $H$ consisting of
eigenfunctions  of the operator $\cA$:
\[
\cA e_k=\l_k e_k, 
\quad 0<\l_1\le \l_2\le\cdots, \quad \lim_{k\to\infty}\l_k =\8.
\]
We also
denote
$\cH=[H_0^1(\O)\cap L_{p+1}(\O)]\times L_2(\O)$. In the 
non-supercritical case
 (when $d\le 2$ or $d\ge 3$ and  $p\le p_*=(d-2)(d+2)^{-1}$)
we have that $H_0^1(\O)\subset L_{p+1}(\O)$\footnote{
To unify the presentation we suppose
that $p\ge 1$ is arbitrary in all appearances  in the case $d=1$.
} 
 and thus the space $\cH$
coincides 
with $H_0^1(\O)\times L_2(\O)$.
We define the norm in $\cH$  by the relation
\begin{equation}\label{cH-norm}
\|(u_0; u_1)\|_\cH^2=\|\g u_0\|^2+\a\|u_0\|_{L_{p+1}(\O)}^2+
\|u_1\|^2,
\end{equation}
where $\alpha=1$ in the case when  $d\ge 3$ and $p>p_*$ 
and $\alpha=0$ in other cases. 
\par
\begin{definition}\label{de:weak}
{\rm
A function $u(t)$ is  said to be a weak solution 
to (\ref{main_eq}) on an interval $[0,T]$ if
\begin{equation}\label{u-ut-d}
u \in L_\8(0,T; H^1_0(\O)\cap L_{p+1}(\O) ),~~~
\.u \in L_\8(0,T; L_2(\O))\cap L_2(0,T;H^1_0(\O)) 
\end{equation}
  and (\ref{main_eq}) is satisfied in the sense of distributions.
}
\end{definition} 
\par
Our main result in this section is Theorem~\ref{pr:wp1}
 on well-posedness of problem (\ref{main_eq}). 
This theorem also contains some 
auxiliary  properties of solutions which we 
need for the results on the asymptotic dynamics.
\begin{theorem}[Well-posedness]\label{pr:wp1}
Let Assumption~\ref{A1} be in force and
 $(u_0; u_1) \in \cH$.
Then  for every $T>0$ problem (\ref{main_eq}) has a unique weak solution
$u(t)$ on $[0,T]$. This solution possesses the following properties:
\begin{enumerate}
  \item[{\bf 1.}] The function $t\mapsto (u(t);u_t(t))$ is (strongly) continuous 
in $\cH=[H_0^1\cap L_{p+1}](\O)\times L_2(\O)$
and 
\begin{equation}\label{u-tt}
u_{tt}\in L_2(0,T; H^{-1}(\O))+ L_{\infty}(0,T; L_{1+1/p}(\O)).
\end{equation}
Moreover, there exists a constant $C_{R,T}>0$ such that
\begin{equation}\label{basic-bnd}
\| u_{t}(t)\|^2+ \| \g u (t)\|^2 
+ c_0\|u(t)\|_{L_{p+1}(\O)}^2 + \int_0^{t}\|\g u_{t}(\t)\|^2 d\t
\le C_{R,T}
\end{equation}
for every $t\in [0,T]$ and  initial data $\|(u_0; u_1)\|_\cH\le R$,
where $c_0=1$ in the case when (\ref{f-supercrit}) holds and 
$c_0=0$ in other cases.
We also have the following additional regularity: 
\[
u_t\in L_\infty(a,T; H_0^1(\O)),\quad 
u_{tt}\in L_\infty (a,T; H^{-1}(\O))\cap L_{2}(a,T; L_{2}(\O))
\]
for every $0<a< T$ and there exist $\beta>0$ and $c_{R,T}>0$ such that
\begin{equation}\label{smoth-prop}
\| u_{tt}(t)\|_{-1}^2+ \| \g u_{t}(t)\|^2 
+\! \int_t^{t+1}\!\!\left[\|u_{tt}(\t)\|^2+
c_0\int_\O|u(x,\t)|^{p-1}|u_t(x,\t)|^2dx\right]  d\t
\le \frac{c_{R,T}}{t^\beta}
\end{equation}
for every $t\in (0,T]$, where as above
 $\|(u_0; u_1)\|_\cH\le R$ and
$c_0>0$ in the supercritical case only.
  \item[{\bf 2.}]
 The following energy identity
\begin{equation}\label{8.1.4}
\cE(u(t), u_t(t))+\int_s^t \s(\|\g u(\tau)\|^2) \|\g u_t(\tau)\|^2 d\tau=
\cE(u(s), u_t(s))
\end{equation}
holds for every $t>s\ge 0$, where the energy $\cE$ is defined by the relation
\begin{equation*}%\label{8.1.4aa}
\cE(u_0, u_1)=\frac12\left[ \|u_1\|^2 + 
\Phi\left(\|\g u_0\|^2\right)\right]+\int_\O  F(u_0)dx-\int_\O h u_0 dx,
~~ (u_0;u_1)\in \cH,
\end{equation*}
with 
\[
\Phi(s)=\int_0^s\phi(\xi) d\xi~~and ~~ F(s)=\int_0^s f(\xi) d\xi.
\]
  \item[{\bf 3.}]
If $u^1(t)$ and $u^2(t)$ are two weak solutions
such that $\|(u^i(0); u^i_t(0))\|_\cH\le R$, $i=1,2$, then
there exists $b_{R,T}>0$ such that the difference $z(t)=u^1(t)-u^2(t)$ satisfies the relation
\begin{equation}
\label{dif-bnd}
\|z_t(t)\|^2_{-1}+ \|\g z(t)\|^2 +\int_0^t\|z_t(\t)\|^2 d\t \leq b_{R,T}\left(
 \|z_t(0)\|^2_{-1}+ \|\g z(0)\|^2\right) 
\end{equation}
for all $t\in [0,T]$,
and, if (\ref{f-supercrit}) holds, we also have that 
\begin{equation}
\label{dif-bnd+}
\int_0^T\left[
\int_\O|z|^{p+1}dx +\int_\O (|u^1|^{p-1}+|u^2|^{p-1}) |z|^2 dx \right] d\t \leq b_{R,T}\left(
 \|z_t(0)\|^2_{-1}+ \|\g z(0)\|^2\right).
\end{equation}
  \item[{\bf 4.}] If we assume in addition that $u_0\in (H^2\cap H^1_0)(\O)$,
then 
$u\in C_w(0,T;(H^2\cap H^1_0)(\O))$, where $C_w(0,T; X)$ stands for 
the space of weakly continuous functions with values in $X$,
and  under the condition  $\|(u_0; u_1)\|_\cH\le R$
we have that 
\begin{equation}
\label{sm-bnd}
\|u_t(t)\|^2+ \|\D u(t)\|^2 \leq C_R(T)\left( 1+
 \|\D u_0\|^2\right) \quad\mbox{for every}~~t\in [0,T]. 
\end{equation}
\end{enumerate}
\end{theorem}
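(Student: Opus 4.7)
The plan is to proceed via a Faedo--Galerkin approximation scheme, using the eigenbasis $\{e_k\}$ of $\cA$. Let $u_N(t)=\sum_{k=1}^N g_k^N(t)e_k$ solve the projected ODE system; local existence follows from the $C^1$ regularity of $\s,\phi,f$. I would first obtain the basic energy estimate by testing against $\.u_N$, which yields the identity $\dt \cE(u_N,\.u_N)+\s(\|\g u_N\|^2)\|\g \.u_N\|^2=0$. Coercivity of $\cE$ in $\cH$ is the delicate point: combining \eqref{phi-si-coer}--\eqref{phi-s-si} with \eqref{f-coercive} (or the lower bound built into \eqref{f-supercrit}) and absorbing $\eta_0\int_0^t\s\|\g \.u_N\|^2$ on the right via Gronwall, one controls the full $\cH$-norm of $(u_N;\.u_N)$ together with $\int_0^T\s(\|\g u_N\|^2)\|\g \.u_N\|^2\,d\t$, yielding \eqref{basic-bnd}. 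The bound \eqref{u-tt} for $u_{tt}$ comes from reading off the equation: $\D\.u$ lies in $L_2(0,T;H^{-1})$, $\phi(\|\g u\|^2)\D u$ in $L_\infty(0,T;H^{-1})$, and in the supercritical case $f(u)$ lies in $L_\infty(0,T;L_{1+1/p})$ thanks to the growth bound in \eqref{f-supercrit} and the embedding $H^1_0\subset L_{p+1}$ for $p<p_{**}$.

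Next I would establish the smoothing estimate \eqref{smoth-prop}. Differentiate the Galerkin equation in $t$ and test against $\.u_N$ (or better $t^\beta \.u_N$) to get, after absorbing terms involving $\s'$, $\phi'$, $f'$ using the growth bounds, a differential inequality of the type $\dt(t^\beta E_1)+ t^\beta\s\|\D \.u_N\|^2\le C_R t^{\beta-1}E_0$, where $E_1=\|\.u_N\|^2_{-1}$-like quantity plus $\|\g \.u_N\|^2$ and $E_0$ is already bounded by \eqref{basic-bnd}. In the supercritical case the term $\int f'(u)\.u^2 dx$ is handled using the lower bound in \eqref{f-supercrit} to absorb it into a good dissipation contribution $\int |u|^{p-1}|\.u|^2 dx$ appearing on the left (this is precisely the Kalantarov--Zelik observation \cite{KaZe09}). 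Passing $N\to\infty$ uses Aubin--Lions on $\.u\in L_2(0,T;H^1_0)$, giving strong $L_2$-convergence of $\.u_N$; combined with weak-$*$ convergence of $u_N$ in $L_\infty(0,T;H^1_0\cap L_{p+1})$ and dominated convergence for $f(u_N)$ (supercritical case uses the additional bound on $\int_0^T\!\!\int|u_N|^{p-1}|\.u_N|^2 dx\,d\t$ from the smoothing estimate, which forces pointwise a.e.\ convergence of $u_N$ and strong $L_{p+1}$-convergence locally), one identifies the nonlinear limits. Strong continuity of $t\mapsto (u;\.u)$ in $\cH$ then follows from the energy identity \eqref{8.1.4} together with weak continuity.

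The main obstacle is uniqueness together with the Lipschitz-type estimates \eqref{dif-bnd}--\eqref{dif-bnd+}. Writing the equation for $z=u^1-u^2$,
\[
\:z-\s(\|\g u^1\|^2)\D\.z-\phi(\|\g u^1\|^2)\D z=G_1+G_2-[f(u^1)-f(u^2)],
\]
where $G_i$ collect the differences coming from the $\s$ and $\phi$ nonlinearities, I would test against $\cA^{-1}\.z$ (i.e., in the $H^{-1}$ pairing) to obtain an identity in the quantity $\|\.z\|^2_{-1}+\phi(\|\g u^1\|^2)\|z\|^2$ (or rather $\|\g z\|^2$ after absorbing the possibly nonpositive $\phi$ using \eqref{phi-s-si} and coercivity already built into the a priori bounds), with the dissipative term $\int_0^t\s(\|\g u^1\|^2)\|\.z\|^2\,d\t$. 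The cross terms $G_1,G_2$ are Lipschitz of $u^i$ in $H^1_0$-norm so they are harmless via Gronwall. The really hard term is $\<f(u^1)-f(u^2),\cA^{-1}\.z\>$ in the supercritical regime: one splits via the mean-value theorem $f(u^1)-f(u^2)=\bigl(\int_0^1 f'(\lambda u^1+(1-\lambda)u^2)d\lambda\bigr)z$ and uses the lower bound in \eqref{f-supercrit} to dominate this by $c\int(|u^1|^{p-1}+|u^2|^{p-1})|z|^2$, producing exactly the quantity appearing in \eqref{dif-bnd+}; the range of exponents $p<p_{**}$ and the smoothing \eqref{smoth-prop} make the corresponding integrals finite through the $H^2$-embedding. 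Gronwall then yields \eqref{dif-bnd} and, after integration, \eqref{dif-bnd+}. Finally, for statement 4, I would repeat the Galerkin scheme with the additional multiplier $\D\.u_N$, using the $H^2$-regularity of $u_0$; the estimate \eqref{sm-bnd} follows from a standard energy argument for the second order equation in $\cA^{1/2}$, and the weak continuity is obtained from the uniform bounds combined with the already established strong continuity in $\cH$.
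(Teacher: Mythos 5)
Your overall strategy (Galerkin, energy plus coercivity bookkeeping, a time-differentiated equation with a $t$-weight for the smoothing estimate, and a difference estimate in a weaker norm via $\cA^{-1}z_t$ with the Kalantarov--Zelik use of the lower bound in \eqref{f-supercrit}) is the same as the paper's, but the central step --- uniqueness and the Lipschitz bounds \eqref{dif-bnd}--\eqref{dif-bnd+} --- has a genuine gap as you describe it. Testing the $z$-equation against $\cA^{-1}\.z$ alone cannot work: against this multiplier the stiffness term produces $\tfrac12\phi_{12}(t)(z,z_t)$, an $L_2$-level quantity, so no control of $\|\g z\|^2$ (which appears in \eqref{dif-bnd}) is generated; and the supercritical term $(f(u^1)-f(u^2),\cA^{-1}z_t)$ must be split by Young's inequality into $\eps\int(1+|u^1|^{p-1}+|u^2|^{p-1})|z|^2\,dx$ plus a term in $\|\cA^{-1}z_t\|_{L_{p+1}}^2$ (this is where H\"older and $H^{2-\eta}\subset L_{p+1}$, i.e.\ $p<p_{**}$, enter --- not the smoothing estimate \eqref{smoth-prop}, which you should not invoke here since \eqref{dif-bnd} is needed for arbitrary weak solutions and is itself used to build solutions for general $\cH$-data). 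The $\eps$-term enters with the wrong sign, and the lower bound in \eqref{f-supercrit} is of no help in this pairing; it cannot ``dominate'' that term, and \eqref{dif-bnd+} does not come from this multiplier at all. The paper's proof couples \emph{two} multipliers in the functional $\Psi$ of \eqref{psi-t-w}: the multiplier $z$ (Lemma~\ref{le:z-mult} together with Lemma~\ref{le:f-coerc}, where $f'\ge c_0|u|^{p-1}-c_1$ is used) puts $\|\g z\|^2$, $\int|z|^{p+1}$ and $\int(|u^1|^{p-1}+|u^2|^{p-1})|z|^2$ on the good side --- these absorb the $\eps$-term and yield \eqref{dif-bnd+} --- while the multiplier $\cA^{-1}z_t$ supplies the dissipation $\s_{12}(t)\|z_t\|^2$ that absorbs the $\|z_t\|^2$ produced by the $z$-multiplier. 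Without the $z$-multiplier your scheme has neither of these mechanisms, so the step fails.

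There are secondary gaps of the same flavor. (i) For the basic bound \eqref{basic-bnd}: since $\phi$ may be negative, coercivity is obtained in the paper by adding the multiplier $u$, i.e.\ the quantity $(u,u_t)+\tfrac12\Sigma(\|\g u\|^2)$, to the energy (the functional $\cW^{\eta,\nu}$) and then using \eqref{phi-si-coer}--\eqref{phi-s-si}; your proposal to ``absorb $\eta_0\int_0^t\s\|\g \.u\|^2$'' does not work because the dissipation integral does not control $\Sigma(\|\g u(t)\|^2)$ pointwise in time. (ii) The smoothing estimate \eqref{smoth-prop} likewise needs the second multiplier $\cA^{-1}v_t$ for $v=u_t$: testing the differentiated equation against $v$ alone leaves $\|v_t\|^2$ on the wrong side and produces neither $\|u_{tt}\|_{-1}^2$ nor $\int\|u_{tt}\|^2$. (iii) The energy \emph{identity} \eqref{8.1.4} and strong $\cH$-continuity at $t=0$ do not follow simply from the Galerkin identity plus weak continuity (the limit gives only an inequality); the paper proves the identity first for $t\ge s>0$ using the extra regularity, then at $s=0$ via Fatou and weak lower semicontinuity, and gets $L_{p+1}$-continuity from uniform convexity plus dominated convergence. (iv) In part 4 the multiplier $\D\.u$ is not admissible since only $u_1\in L_2(\O)$ is assumed; the paper uses $-\D u$ combined with the energy, cf.\ \eqref{20a}.
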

\begin{proof} 
Let  $\Sigma(s)=\int_0^s\s(\xi) d\xi$.
For every $\eta>0$
we  introduce the following functional on $\cH$:
\begin{equation}\label{e+eta}
\cE_+^\eta(u_0, u_1)= \|u_1\|^2 + 
\left[\Phi\left(\|\g u_0\|^2\right)+
\eta \Sigma\left(\|\g u_0\|^2\right)
-a(\eta)\right]
+  \a \|u_0\|^{p+1}_{L_{p+1}(\O)}
+\|u_0\|^2
\end{equation}
with
$a(\eta)=\inf_{s\in\R_+}\{\Phi(s)+\eta\Sigma(s)\}$, where
 $\alpha=1$ in the case when (\ref{f-supercrit}) holds and
$\alpha=0$ in other cases.
By (\ref{phi-si-coer}) this functional is finite 
for every $\eta\ge \eta_0$. 
\par
Let $\nu\in\R_+$ and 
\begin{equation}\label{w+eta}
\cW^{\eta,\nu}(u_0, u_1)= \cE(u_0,u_1)
+\eta \left[ (u_0,u_1)+\hf \Sigma\left(\|\g u_0\|^2\right)\right]
+\nu \|u_0\|^2.
\end{equation}
One can see that
for every $\eta\ge \eta_0$ we can choose $\nu=\nu(\eta,\mu_f)\ge 0$,
 positive constants  $a_i$ and 
a monotone positive function $M(s)$ such that  
\begin{equation}\label{e+w-eta}
a_0 \cE_+^\eta(u_0, u_1)-a_1 \le \cW^{\eta,\nu}(u_0, u_1)\le
 a_2 \cE_+^\eta(u_0, u_1) +M(\|\g u_0\|^2),
~~\forall\,  (u_0; u_1)\in\cH.
\end{equation}
\par 
To prove the existence of solutions, we use the standard Galerkin method.
We start with the case when  $u_0\in (H^2\cap H^1_0)(\O)$ and
 assume that $\|(u_0;u_1)\|_\cH\le R$ for some $R>0$.
We seek for approximate solutions  of the form
$$
u^N(t)=\sum\limits_{k=1}^N g_k(t)e_k,\quad N=1,2,\ldots,
$$
that satisfy the finite-dimensional projections of (\ref{main_eq}). 
Moreover, we assume that 
\[
\|(u^N(0);u^N_t(0))\|_\cH\le C_R~~~\mbox{and}~~~ 
\|u^N(0)-u_0\|_2\to0~~\mbox{as}~~ N\to\infty.
\]
Such solutions exist (at least locally), and after multiplication 
of the corresponding projection of
 (\ref{main_eq}) by $u_t^N(t)$ we get that $u^N(t)$ satisfies the energy relation in \eqref{8.1.4}.
Similarly, one can see  from (\ref{phi-s-si}) and (\ref{f-coercive}) that 
\begin{align*}
 \frac{d}{dt}\left[ (u^N,u^N_t)+\frac12
\Sigma(\|\g u^N\|^2)\right]& = \| u^N_t\|^2-  \phi(\|\g u^N\|^2)\|\g u^N\|^2-
(f(u^N), u^N)+(h,u^N)\\
&\le  \| u^N_t\|^2+ C_1\Sigma(\|\g u^N\|^2)+C_2\|u^N\|^2 +C_3.
\end{align*}
One can see from (\ref{phi-si-coer})  that for every $\eta>\eta_0$
there exist $c_i>0$ such that
\[
\Sigma(s)\le c_1\left[\Phi(s)+\eta \Sigma(s)-a(\eta)\right] +c_2,
 ~~~ s\in\R_+.
\]
Thus  using (\ref{e+w-eta}) we have  that 
 the function $\cW^{\eta,\nu}_N(t)\equiv\cW^{\eta,\nu} (u^N(t),u^N_t(t))$
satisfies the inequality
\[
\frac{d}{dt}\cW^{\eta,\nu}_N(t) \le \eta\left(
 \| u^N_t\|^2+ C_1\Sigma(\|\g u^N\|^2)+C_2
\|u^N\|^2 +C_3
\right) \le c_1 \cW^{\eta,\nu}_N(t) +c_2
\]
for $\eta>\eta_0$ with $\nu$ depending on $\eta$ and $f$. 
Therefore, using Gronwall's type argument and also relation (\ref{e+w-eta})
we obtain
\[
\cE^\eta_+(u^N(t);u^N_t(t)) \le C_{R,T}~~\mbox{ for all}~~ t\in [0,T],~~ N=1,2,3\ldots,
\]
for every $\eta>\eta_0$. By the coercivity requirement 
in (\ref{phi-si-coer}) we conclude that
\begin{equation}\label{1st-apri}
\|(u^N(t);u^N_t(t))\|_\cH \le C_{R,T}~~\mbox{ for all}~~ t\in [0,T],~~ N=1,2,3\ldots.
\end{equation}
Since $\s(s)>0$, this implies that $\s(\|\g u^N(t)\|^2)>\s_{R,T}$ for all
$t\in [0,T]$. Therefore 
the energy relation \eqref{8.1.4} for $u^N$ yields that
\begin{equation}\label{2st-apri}
\int_0^T\|\g u_t^N(t)\|^2dt \le C(R,T), \quad N=1,2,\ldots,
~~\mbox{
for any $T>0$.}
\end{equation}
Now we use the multiplier $-\D u$ 
(below we omit the superscript $N$ for shortness).
We obviously have that
\begin{align}\label{20a}
\lefteqn{ \frac{d}{dt}\left[-(u_t,\D u)+\frac12 \s(\|\g u\|^2)\|\D u\|^2 \right] 
+
\phi(\|\g u\|^2) \|\D u\|^2+(f'(u), |\g u|^2)
} \nonumber 
\\ & &\le 
\|\nabla  u_t\|^2+    \s'(\|\g u\|^2) (\g u,\g u_t)\|\D u\|^2 
 +\|h\| \|\D u\|.  
\end{align}
In the case when $d\ge 3$ and (\ref{f-supercrit}) holds, we have
\[
(f'(u), |\g u|^2)\ge c_0\int_\O |u|^{p-1}|\g u|^2 dx -c_1\|\g u\|^2,
~~~ c_0, c_1>0.
\]
In other (non-supercritical) cases, due to the embedding $H^1(\O)\subset L_{p+1}(\O)$, from (\ref{1st-apri})
we have the relation 
$|(f'(u), |\g u|^2)|\le  c_{R,T}\|\Delta  u\|^2$.
This implies that
\begin{align}\label{ut-Du}
\frac{d}{dt}\left[-(u_t,\D u)+
\frac12 \s(\|\g u\|^2)\|\D u\|^2 \right] 
\le 
\|\nabla u_t\|^2 +c_{R,T}(1+ \|\g u_t\|)\cdot \|\D u\|^2~+C_{R,T}.  
\end{align}
for every $t\in [0,T]$. Let 
\[
\Psi(t)=\cE(u(t),u_t(t))+\eta \left[-(u_t,\D u)+\frac12 \s(\|\g u\|^2)\|\D u\|^2 \right]
\]
with $\eta>0$. We note that there exists $\eta_*=\eta(R,T)>0$ such that
\begin{equation}\label{low-bnd}
\Psi(t)\ge \alpha_{R,T,\eta}  \left[ \|u_t\|^2+\|\D u\|^2 \right]-C_{R,T},
~~~ t\in [0,T],
\end{equation}
for every $0<\eta<\eta_*$.
Therefore using the energy relation (\ref{8.1.4}) for the approximate 
solutions  and also (\ref{ut-Du}) one can choose $\eta>0$ such that
\[
\frac{d}{dt}\Psi (t)\le c_0[\Psi(t)+c_1] (1+\| \g u_t\|^2),
 ~~~ t\in [0,T],
\]
with appropriate $c_i>0$.
By (\ref{2st-apri}) and (\ref{low-bnd})
this implies the estimate
\[
 \|u^N_t(t)\|^2+\|\D u^N(t)\|^2\le 
C_R(T) \left[1+\|\D u^N(0)\|^2 \right],~~~ t\in [0,T].
\]
\par
The above a priori estimates show that
$(u_N;\.u_N)$ is ${}^*$-weakly compact in
\[
\cW_T\equiv L_\infty(0,T; H^2(\O))\cap L_{p+1}(\O))\times \left[L_\infty  (0,T; L_2(\O))\cap 
L_2(0,T; H^1_0(\O))\right]
\quad \mbox{for every}~~ T>0.
\]
Moreover,  using the equation for $u^N(t)$ we can  show
 in the standard way that
\begin{equation}\label{3rd-apri}
\int_0^T \|\:u^N(t)\|_{-m}^2 dt \le C_T(R), \quad N=1,2,\ldots,
\end{equation}
for some $m\ge \max\{1,d/2\}$. Thus 
the Aubin-Dubinsky theorem (see \cite[Corollary 4]{Simon})
yields that $(u_N;\.u_N)$ is also  compact in
\[
C(0,T; H^{2-\e}(\O)) \times [C(0,T; H^{-\e}(\O))\cap L_2(0,T;H^{1-\e}(\O))]
~~\mbox{
for every $\e>0$.}
\]
 Thus there exists  an element  $(u;u_t)$ in $\cW_T$ such that
(along a subsequence) the following convergence holds:
\[
\max_{[0,T]}\| u^N(t)-u(t) \|_{2-\eps}^2+
\int_0^T\| u^N_t(t)-u_t(t) \|_{1-\eps}^2
dt\to 0~~\mbox{as}~~ N\to \infty.
\]
Moreover, by the Lions Lemma (see Lemma 1.3 in \cite[Chap.1]{Lio69})
we have that 
\[
f(u^N(x,t))\to f(u(x,t))~~\mbox{weakly in}~~ L_{1+1/p}([0,T]\times \O).
\]
This allows us to make a limit transition in nonlinear terms and prove the
existence of a weak solution under the additional condition
$ u_0\in (H^2\cap H^1_0)(\O)$.
One can see that this solution possesses the properties (\ref{u-tt}),
(\ref{basic-bnd}), (\ref{sm-bnd})
and satisfies the corresponding energy inequality.
\medskip\par  
Now we prove that  (\ref{dif-bnd}) (and also (\ref{dif-bnd+}) 
in the supercritical case) hold for every couple   $u^1(t)$ 
and $u^2(t)$ of weak solutions. 
For this we use the same idea as \cite{KaZe09}
and start with the following preparatory lemma which we also use
in the further considerations.
\begin{lemma}\label{le:z-mult}
Let  $u^1(t)$ 
and $u^2(t)$ be two weak solutions  to (\ref{main_eq})
with different initial data $(u^i_0;u_1^i)$ from $\cH$ such that
\begin{equation}\label{weak-bnd}
\| u_t^i(t)\|^2+ \|\g u^i(t) \|^2\le R^2~~\mbox{for all}~~t\in [0,T]
~~\mbox{and for some}~~R>0.
\end{equation}
Then for  $z(t)=u^1(t)-u^2(t)$ we have the relation
\begin{align}\label{array-z-z-eq}
\frac{d}{dt}\left[ (z,z_t)+\frac14 \s_{12}(t)
\cdot \|\g z\|^2\right]+ \frac12 \phi_{12}(t)\cdot \|\g z\|^2 + (f(u^1)-f(u^2),z)\qquad {}\qquad{}
\nonumber
\\   +\, \widetilde\phi_{12}(t) |(\g (u^1+u^2),\g z)|^2
\le  \; \|z_t\|^2+  C_R\left(\|\g u^1_t\|+\|\g u^2_t\|\right) \|\g z\|^2
\end{align}
for all $t\in [0,T]$,
where   $\s_{12}(t)= \s_{1}(t) +\s_{2}(t)$   and 
 $\phi_{12}(t)= \phi_{1}(t) +\phi_{2}(t)$
with  $\s_{i}(t)=\s(\|\g u^i(t)\|^2)$ and   
$\phi_{i}(t)=\phi(\|\g u^i(t)\|^2)$.  We also use the following notation
\begin{equation}\label{phi-tild}
\widetilde\phi_{12}(t)=\hf \int_0^1 \phi'(\la \|\g u^1(t)\|^2
+(1-\la) \|\g u^2(t)\|^2 )d\la.
\end{equation}
\end{lemma}
\begin{remark}\label{re:z-mult}
{\rm It follows directly from Definition~\ref{de:weak} that 
(\ref{u-tt}) holds for every weak solution. This and also (\ref{u-ut-d})
allows us to show that   $(z,z_t)+ \s_{12}(t)  \|\g z\|^2/4$
is absolutely continuous  with respect to $t$ and thus
the relation in (\ref{array-z-z-eq}) has a meaning for every couple
of weak solutions.  
}
\end{remark}
\begin{proof}
One can see that  $z(t)=u^1(t)-u^2(t)$ solves the equation
\begin{equation}\label{abs-dif}
    z_{tt} -\frac12\s_{12}(t)\D z_t -  \frac12\phi_{12}(t)\D z
  +G(u^1,u^2; t)=0,
\end{equation}
where 
\[
G(u^1,u^2; t)=  -\frac12 \left\{[ \s_{1}(t) -\s_{2}(t)] 
\D(u^1_t+ u^2_t) +
[  \phi_{1}(t) -\phi_{2}(t)] \D (u^1+u^2)\right\} + f(u^1)-f(u^2).
\]
Since $G\in L_2(0,T;H^{-1}(\O))+L_\infty(0,T; L_{1+1/p}(\O))$ and
$z\in L_\infty(0,T; (H^1_0\cap L_{p+1})(\O))$
for any couple  $u^1$ and $u^2$ of weak solutions, we can
multiply  equation (\ref{abs-dif})  by $z$ in $L_2(\O)$.
Therefore using  the relation
\[
|\s'_{12}(t)|
\le  C_R\left(\|\g u^1_t\|+\|\g u^2_t\|\right) 
\]
and also the observation made in Remark~\ref{re:z-mult} we conclude  that 
\begin{eqnarray*}
\frac{d}{dt}\left[ (z,z_t)+\frac14 \s_{12}(t)
\cdot \|\g z\|^2\right]+ \frac12 \phi_{12}(t)\cdot \|\g z\|^2 +
 (G(u^1,u^2,t),z)
\nonumber
\\  
\le \, \|z_t\|^2+  C_R\left(\|\g u^1_t\|+\|\g u^2_t\|\right) 
\cdot \|\g z\|^2.
\end{eqnarray*}
One can see that 
$\phi_{1}(t) -\phi_{2}(t)=
 2(\g (u^1+u^2),\g z)\cdot \widetilde{\phi}_{12}(t)$,
where $\widetilde{\phi}_{12}$ is given by (\ref{phi-tild}),
and 
\[
|[ \s_{1}(t) -\s_{2}(t)] (\g(u^1_t+ u^2_t),\g z)|
\le C_R\left(\|\g u^1_t\|+\|\g u^2_t\|\right) 
\cdot \|\g z\|^2.
\]
Thus using the structure of the term $G(u^1,  u^2;t)$ we obtain (\ref{array-z-z-eq}).
\end{proof}
\begin{lemma}\label{le:f-coerc}
Assume that $f(u)$ satisfies Assumption~\ref{A1}
and the additional requirement\footnote{This requirement holds automatically
in the supercritical case, see (\ref{f-supercrit}).}  saying that  $f'(u)\ge -c$
for some $c\ge 0$. Then for $z=u^1-u^2$ we have that
\begin{equation}\label{p-est1}
\int_\O (f(u^1)-f(u^2))  (u^1-u^2) dx\ge -c_0\|z\|^2 +c_1
\int_\O(|u^1|^{p-1}+|u^2|^{p-1}) |z|^2 dx
\end{equation}
and
\begin{equation}\label{p-est2}
\int_\O (f(u^1)-f(u^2))  (u^1-u^2) dx\ge -c_0\|z\|^2 +c_1
\int_\O|z|^{p+1} dx,
\end{equation}
where $c_0\ge 0$ and $c_1>0$ in the case when (\ref{f-supercrit}) holds and
$c_1=0$ in other cases.
\end{lemma}
\begin{proof}
It is sufficient to consider the case when (\ref{f-supercrit}) holds.
\par
The relation
in (\ref{p-est1}) follows from the obvious inequality 
\[
\int_0^1|(1-\la)u^1+\la u^2|^r d\la \ge c_r \left(|u^1|^r
+|u^2|^r\right),~~ r\ge 0,~~ u^i\in\R,
\]
which can be obtained by the direct calculation of the integral.
As for (\ref{p-est2}) we use the obvious representation 
\[
\int_{u_1}^{u_2}|\xi|^r d\xi   =\frac1{r+1}\left( |u^1|^r u^1-
|u^2|^r u^2\right),~~ r\ge 0,~~ u^i\in\R, ~~ u^1<u^2,
\]
and the argument given in \cite[Remark 3.2.9]{cl-book}.
\end{proof}

Now we return to the proof of relations (\ref{dif-bnd}) and (\ref{dif-bnd+}).
\par
Let $u^1$ and $u^2$ be weak solutions satisfying (\ref{weak-bnd})
and also the inequality $\|u^i(t)\|_{L_{p+1}(\O)}\le R$
for all $t\in [0,T]$ in the supercritical case.
We first note that in the non-supercritical case by the embedding
$H^1(\O)\subset L_{r}(\O)$ for $r=\infty$ in the case $d=1$,
 for arbitrary $1\le r<\infty$ when $d=2$ and for
 $r=2d(d-2)^{-1}$ in the case $d\ge 3$ we have that 
\begin{equation}\label{f-est-h1}
\|f(u^1)-f(u^2)\|_{-1}\le C_R \|\g (u^1-u^2)\|, ~~ u^1,u^2\in H_0^1(\O), 
~\|\g u^i\|\le R,
\end{equation}
which implies  that 
$|(f(u^1)-f(u^2),z)|\le C_R \|\g z\|^2$.
Therefore 
 it follows from Lemma \ref{le:z-mult} and from Lemma \ref{le:f-coerc}
in the supercritical case that 
\begin{align} \label{array-eq1}
 \frac{d}{dt}\left[ (z,z_t)+\frac14
\s_{12}(t) \|\g z\|^2\right] &+  \frac12\phi_{12}(t)\|\g z\|^2
 + c_0\left[
\int_\O|z|^{p+1} dx +
\int_\O(|u^1|^{p-1}+|u^2|^{p-1}) |z|^2 dx \right]
 \nonumber
 \\ 
 & \le   \, \|z_t\|^2+  C_R\left( 1+\|\g u^1_t\|+|\g u^2_t\|\right) \|\g z\|^2,
\end{align} 
where $c_0$ is positive in the supercritical case only.
\par

Now we consider the multiplier  $\cA^{-1} z_t$.
Since $H^{2-\eta}(\O)\subset L_{p+1}(\O)$ for some $\eta>0$
under the condition  $p<p_{**}=(d+4)/(d-4)_+$, 
 we easily obtain that
\begin{equation}\label{zt-Lp+1}
\|\cA^{-1} z_t\|_{L_{p+1}}^2\le C \|\cA^{-\eta/2} z_t\|^2
\le   \eps \|z_t\|^2+ C_\eps\|\cA^{-1/2} z_t\|^2~~\mbox{for every $\eps>0$}.
\end{equation}
Thus we can  multiply equation (\ref{abs-dif}) 
by $\cA^{-1} z_t$ and
 obtain that 
\begin{align}\label{ar:mult1}
 \frac12 \frac{d}{dt} \|\cA^{-1/2}z_t\|^2+
\frac12\phi_{12}(t) (z,z_t)+\hf\s_{12}(t)\| z_t\|^2 +
(G(u^1,u^2; t),\cA^{-1} z_t)=0,
\end{align}
where 
\begin{equation}\label{g-zt-min}
(G(u^1,u^2; t),\cA^{-1} z_t)=G_1(t)+G_2(t)+G_3(t).
\end{equation}
Here 
\[
G_1(t)=  -\frac12 [ \s_{1}(t) -\s_{2}(t)] (\D(u^1_t+ u^2_t),\cA^{-1} z_t), 
\]
\[
G_2(t)=  \widetilde\phi_{12}(t) 
 (\g (u^1+u^2),\g z) (\g (u^1+u^2),\g \cA^{-1} z_t)
\]
with $\widetilde\phi_{12}(t)$ given by (\ref{phi-tild}),
and
$G_3(t)= (f(u^1)-f(u^2),\cA^{-1} z_t)$.
\par 
One can see that 
$|(G_1(t) +G_2(t)|\le C_R \|z_t\|\cdot\|\g z\|$.
In the non-supercritical case by (\ref{f-est-h1}) we have the same estimate
for $|G_3(t)|$.
In the supercritical case 
we obviously have that
\begin{align} \label{f-a-1} 
\lefteqn{
\int_\O |f(u^1)-f(u^2)| |\cA^{-1} z_t| dx }\\
&&\le
 \eps  
\int_\O(1+|u^1|^{p-1}+|u^2|^{p-1}) |z|^2 dx +
C_\eps \int_\O(1+|u^1|^{p-1}+|u^2|^{p-1}) |\cA^{-1} z_t|^2 dx
\nonumber
\\ 
&&\le
 \eps  
\int_\O(1+|u^1|^{p-1}+|u^2|^{p-1}) |z|^2 dx +
C_\eps \left[\int_\O(1+|u^1|^{p+1}+|u^2|^{p+1}) dx\right]^{\frac{p-1}{p+1}}
 \|\cA^{-1} z_t\|_{L_{p+1}}^2.\nonumber 
\end{align} 
Therefore using (\ref{zt-Lp+1}) we have that
\begin{multline*}\label{}
|(G(u^1,u^2; t),\cA^{-1} z_t)| 
\le  C_R  \|z_t\|\cdot\|\g z\|  \\
  +\,
 \eps  \left[
\int_\O(|u^1|^{p-1}+|u^2|^{p-1}) |z|^2 dx + \|z\|^2+ \|z_t\|^2\right] +
C_\eps(R)\|\cA^{-1/2} z_t\|^2  
\end{multline*}
for any $\eps>0$.
Thus from (\ref{ar:mult1}) we obtain 
\begin{multline} \label{G-zt}
 \frac12 \frac{d}{dt}
 \|\cA^{-1/2}z_t\|^2+\hf \s_{12}(t)\| z_t\|^2 
  \le  C_R   \|z_t\|\cdot\|\g z\| 
  \\ 
 +\,
 \eps c_0 \left[
\int_\O(|u^1|^{p-1}+|u^2|^{p-1}) |z|^2 dx + \|z\|^2+ \|z_t\|^2\right] + c_0
C_\eps(R)\|\cA^{-1/2} z_t\|^2 
\end{multline}
for any $\eps>0$, where $c_0=0$ in the non-supercritical case.
Let 
\begin{equation}\label{psi-t-w}
\Psi(t)= \frac12  \|\cA^{-1/2}z_t\|^2+\eta \left[ (z,z_t)+\frac14
\s_{12}(t) \|\g z\|^2\right]
\end{equation}
for $\eta>0$ small enough. It is obvious that
for $\eta\le \eta_0(R)$ we have
\begin{equation}\label{eq-nrms}
 a_R\eta  \left[ \|\cA^{-1/2}z_t\|^2+
 \|\g z\|^2\right]\le \Psi(t)\le b_R  \left[ \|\cA^{-1/2}z_t\|^2+
 \|\g z\|^2\right].
\end{equation}
From  (\ref{array-eq1}) and (\ref{G-zt}) we also have that
\begin{multline*} %\label{G-zt-1}
 \frac{d\Psi}{dt}
 +\left[\hf \s_{12}(t)-\eta-c\eps\right]\| z_t\|^2
 +c_0\eta 
\int_\O|z|^{p+1} dx  \\+ c_0(\eta -\eps) 
\int_\O(|u^1|^{p-1}+|u^2|^{p-1}) |z|^2 dx\nonumber 
  \le C_\eps(R)\left[ \|\g z\|^2
  +
\|\cA^{-1/2} z_t\|^2\right].
\end{multline*}
After selecting appropriate $\eta$ and $\eps$
this implies the desired conclusion in (\ref{dif-bnd}) and (\ref{dif-bnd+}).
\par
We can use   (\ref{dif-bnd}) and (\ref{dif-bnd+}) to prove the existence 
 of weak solutions for initial data
 $(u_0; u_1) \in \cH$  by limit transition from  smoother solutions.
Indeed, we can choose a sequence $(u_0^n;u_1^n)$ elements 
from $(H^2\cap H^1_0)(\O)\times L_2(\O)$
such that $(u_0^n;u_1^n)\to (u_0;u_1)$ in $\cH$.
Due to (\ref{dif-bnd}) and (\ref{dif-bnd+}) 
the corresponding solutions $u^n(t)$ 
converge to a function $u(t)$ in the sense that
\[
\max_{t\in [0,T]}\left\{
\|u^n_t(t)-u_t(t)\|^2_{-1}+ \| u^n(t)-u(t)\|_1^2\right\} +\int_0^T\|u^n(\t)-u(\t)\|^{p+1}_{L_{p+1}(\O)} d\t \to 0.
\]
From the boundedness provided by the energy relation in (\ref{basic-bnd})
for $u^n$
 we also have $*$-weak convergence of $(u^n;u_t^n)$  to  $(u;u_t)$
in the space  
\[
 L_\infty(0,T; H^1(\O))\cap L_{p+1}(\O))\times \left[L_\infty  (0,T; L_2(\O))\cap 
L_2(0,T; H^1_0(\O))\right].
\] This implies that $u(t)$ is a weak solution.
By (\ref{dif-bnd}) this solution is unique.
Moreover, this solution satisfies the corresponding energy 
{\em inequality}.
\medskip\par
Now we prove smoothness properties of weak solutions
stated in (\ref{smoth-prop}) using the same method as 
\cite{KaZe09} (see also \cite{BabinVishik}).
\par
As usual the argument below can be justified by considering 
Galerkin approximations.
\par 
Let $u(t)$ be a solution such that $\|(u(t);u_t(t))\|_\cH\le R$
for $t\in [0,T]$.
Formal differentiation gives that $v=u_t(t)$
solves the equation
\begin{equation}\label{abs-dif-d}
    v_{tt} -\s(\|\g u\|^2)\D v_t -  \phi(\|\g u\|^2)\D v
  +f'(u)v + G_*(u,u_t; t)=0,
\end{equation}
where 
\[
G_*(u,u_t; t)=  -2\left[\s'(\|\g u\|^2) \D u_t +
  \phi'(\|\g u\|^2)\D u  \right] (\g u,\g u_t).
\]
Thus, multiplying  equation (\ref{abs-dif-d})  by $v$ we have that 
\begin{eqnarray*}
 \frac{d}{dt}\left[ (v,v_t)+\frac12
\s(\|\g u\|^2) \|\g v\|^2\right]+  \phi(\|\g u\|^2)\|\g v\|^2+(f'(u)v,v) 
  \\
 \le \, \|v_t\|^2+  C_R\left[ | (\g u,\g v)|^2  + | (\g u,\g v)| 
\|\g v\|^2\right].
\end{eqnarray*}
This implies that 
\[
 \frac{d}{dt}\left[ (v,v_t)+\frac12
\s(\|\g u\|^2) \|\g v\|^2\right]+  
c_0\int_\O |u|^{p-1}v^2 dx
 \le \, \|v_t\|^2+  C_R\left[ 1  + \|\g u_t\| \right]
\|\g v\|^2,
\]
where $c_0>0$ in the supercritical case only.
Using the multiplier $\cA^{-1}v_t$ in (\ref{abs-dif-d}) we obtain that
\begin{eqnarray*}
 \frac12 \frac{d}{dt} \|\cA^{-1/2}v_t\|^2+\s(\|\g u\|^2)\| v_t\|^2
 \le 
C_R\|\g v\|\|v_t\|+
\left|\int_\O f'(u)v\cA^{-1} v_t dx \right|.
\end{eqnarray*}
As above (cf.\ \eqref{f-a-1}) in the supercritical case
  we have that 
\[
\left|\int_\O f'(u)v\cA^{-1} v_t dx \right| \le
 \eps  
\int_\O(1+|u|^{p-1}) |v|^2 dx +
C_{R,\eps}
 \|\cA^{-1} v_t\|_{L_{p+1}}^2.
\]
for any $\eps>0$. Thus
\begin{eqnarray*}
\lefteqn{ \frac12 \frac{d}{dt} \|\cA^{-1/2}v_t\|^2+ \s(\|\g u\|^2)\| v_t\|^2} \\
& & \le \, \eps\left( \| v_t\|^2+c_0\int_\O |u|^{p-1}v^2 dx
\right) +
C_{R,\eps}\left[\|\g v\|^2 + 
 \|\cA^{-1} v_t\|_{L_{p+1}}^2\right].
\end{eqnarray*}
We introduce now the functional 
\[
\Psi_*(t)= \frac12  \|\cA^{-1/2}v_t\|^2+\eta \left[ (v,v_t)+\frac12
\s(\|\g u\|^2) \|\g v\|^2\right]
\]
for $\eta>0$ small enough. It is obvious that
for $\eta\le \eta_0(R)$ we have
\[
 a_R\eta  \left[ \|\cA^{-1/2}v_t\|^2+
 \|\g v\|^2\right]\le \Psi_*(t)\le b_R  \left[ \|\cA^{-1/2}v_t\|^2+
 \|\g v\|^2\right].
\]
Using (\ref{zt-Lp+1})
we also have that
\begin{align*} %\label{G-zt-12}
 \frac{d\Psi_*}{dt}
 +\left[\s(\|\g u\|^2)-\eta-\eps\right]\| v_t\|^2
 +c_0[\eta-\eps] \int_\O|u|^{p-1} v^2 dx 
\nonumber  
\\   \le C_{R,\eps}\left( 1+\|\g u_t\|^2\right)\left[  \|\cA^{-1/2}v_t\|^2+
 \|\g v\|^2\right].
\end{align*}
In particular for $\eta>0$ small enough, there exists $\a_R>0$ such  that 
\begin{equation}\label{g-vt} 
 \frac{d\Psi_*}{dt}+\alpha_R\left( \| v_t\|^2
 +c_0 \int_\O|u|^{p-1} v^2 dx\right)
  \le  C_R\left( 1+\|\g u_t\|^2\right) \Psi_*(t),
\end{equation}
where $c_0>0$ in the supercritical case only.
This implies that
\begin{align*}% \label{smoth-preserv}
\| u_{tt}(t)\|_{-1}^2+ \| \g u_{t}(t)\|^2 
+\! \int_0^{t}\!\!\left[\|u_{tt}(\t)\|^2+
c_0\int_\O|u(x,\t)|^{p-1}|u_t(x,\t)|^2dx\right]  d\t\nonumber
\\
\le C_{R,T} \left(\| u_{tt}(0)\|_{-1}^2+ \| \g u_t(0)\|^2\right)
\end{align*}
for $t\in [0,T]$,
where  $c_0=0$ in the non-supercritical case. 
This formula demonstrates preservation of some smoothness.
To obtain (\ref{smoth-prop}) we multiply (\ref{g-vt}) by $t^\alpha$.
This gives us the relation
\begin{equation}\label{g-vt-1} 
 \frac{d}{dt}
(t^\a\Psi_*)+\alpha_R t^\a \| v_t\|^2
  \le C_R\left( 1+\|\g u_t\|^2\right)[ t^\a \Psi_*]
+\a t^{\a-1} b_R\left[  \|\cA^{-1/2}v_t\|^2+
 \|\g v\|^2\right].
\end{equation}
One can see that 
\[
t^{\a-1}\|\g v\|^2  \le 1+ t^{2(\a-1)} \|\g u_t\|^2\|\g v\|^2\le 
C_T[1+\|\g u_t\|^2(t^\a\Psi_*)],~~ t\in [0,T],
\]
provided $\alpha\ge 2$. We also have that
$\|\cA^{-1/2} v_t\|^2  \le C \|v_t\|^\delta 
\|\cA^{-m} u_{tt}\|^{2-\delta}$ for any  $m\ge1 $ 
with $\delta=\delta(m)\in [1,2)$.
Since 
\[
\cA^{-m}u_{tt} =\s(\|\g u\|^2)\cA^{-m+1}u_t +
 \phi(\|\g u\|^2) \cA^{-m+1} u - \cA^{-m}( f(u)-h),
\]
one can see that
$\|\cA^{-m} u_{tt}\|\le C_R + \int_\O |f(u)|dx\le \tilde C_R$ 
for $m\ge \max\{1,d/2\}$.
Therefore
\[
t^{\a-1}\|\cA^{-1/2} v_t\|^2  \le C_{\delta} t^{(\a-1)} \|v_t\|^\delta 
\le \eps  t^{\a} \|v_t\|^2 +  C_{R,T,\delta,\eps},~~ t\in [0,T],
\]
provided $2(\alpha-1)/\delta\ge \alpha$.
Thus from (\ref{g-vt-1}) we have that 
 \[
 \frac{d}{dt}
(t^\a\Psi_*)   
  \le C_{R,T}+ C_{R,T}\left( 1+\|\g u_t\|^2\right)[ t^\a \Psi_*].
\]
This implies (\ref{smoth-prop}) with some $\beta>0$.
\par 
Now we prove that the function $t\mapsto (u(t);u_t(t))$ 
is (strongly) continuous  in $\cH=[H_0^1(\O)\cap L_{p+1}(\O)]\times L_2(\O)$
and establish energy relation (\ref{8.1.4}).
We concentrate on the supercritical case only (other cases are much simpler).
\par 
We first note that
the function $t\mapsto (u(t);u_t(t))$ is weakly continuous in $\cH$
for every $t\ge 0$ and $t\mapsto u(t)$ is strongly continuous in $H^1_0(\O)$,
$t\ge 0$. Moreover,
  (\ref{smoth-prop}) implies that 
$t\mapsto (u(t);u_t(t))$ is continuous in 
$H_0^1(\O)\times L_2(\O)$ at every point $t_0>0$. 
\par 
Let us prove that $t\mapsto \|u(t)\|^{p+1}_{L_{p+1}(\O)}$ is continuous 
at $t_0>0$.
From (\ref{smoth-prop}) and from the energy inequality 
for weak solutions we have that 
\begin{equation}\label{fin-int}
\int_a^b\int_\O |u|^{p-1}(|u|^{2}+ |u_t|^2) dx dt \le C_{a,b},~~~ \mbox{for all}~~ 
0<a<b\le T.
\end{equation}
On  smooth functions we also have that
\[
\left|\frac{d}{dt}\|u(t)\|^{p+1}_{L_{p+1}(\O)}\right|=(p+1)
\left|\int_\O |u|^p u_t dx\right| 
\le\frac{p+1}2\int_\O |u|^{p-1}(|u|^{2}+ |u_t|^2) dx. 
\]
Therefore by (\ref{fin-int}) for $t_2>t_1>a$ we have that 
 \[
\left|\|u(t_2)\|^{p+1}_{L_{p+1}(\O)} -\|u(t_1)\|^{p+1}_{L_{p+1}(\O)} \right| 
\le\frac{p+1}2\int_{t_1}^{t_2}\int_\O |u|^{p-1}(|u|^{2}+ |u_t|^2) dxdt 
\to 0~~\mbox{as}~~t_2-t_1\to 0.
\]
Thus the function $t\mapsto \|u(t)\|^{p+1}_{L_{p+1}(\O)}$ is continuous
for $t>0$. Since $u(t)$ is weakly continuous in $L_{p+1}(\O)$ for $t>0$
and $L_{p+1}(\O)$ is uniformly convex, we conclude 
that $u(t)$ is norm-continuous in  $L_{p+1}(\O)$ at every point $t_0>0$.
\par 
In the next step we establish energy relation (\ref{8.1.4}) for every
 $t>s>0$. For this we note that by (\ref{smoth-prop})
equation (\ref{main_eq}) is satisfied on any interval 
$[a,b]$, $0<a<b\le T$,
as an equality in  space $\left[H^{-1}+L_{1+1/p}\right](\O)$. Moreover one can see that 
$f(u)u_t\in L_1([a,b]\times\O)$. This allows us  
to multiply equation (\ref{main_eq}) by $u_t$ and prove (\ref{8.1.4}) 
for $t\ge s>0$.
\par 
To prove energy relation  (\ref{8.1.4}) for $s=0$ we note that
it follows from  (\ref{8.1.4}) 
for $t\ge s>0$ that the limit $\cE(u(s),u_t(s))$
 as $s\to 0$
exists    and 
\[
\cE_*\equiv \lim_{s\to 0}\cE(u(s),u_t(s))
=\cE(u(t), u_t(t))+\int_0^t \s(\|\g u(\tau)\|^2) \|\g u_t(\tau)\|^2 d\tau.
\]
Since  $u(t)$
is continuous in $H^1_0(\O)$ on $[0,+\infty)$,
we conclude that there is a sequence $\{s_n\}$, $s_n\to 0$,
such that $u(x,s_n)\to u_0(x)$ almost surely. Since 
$F(u)\ge -c$
for all $u\in \R$, from Fatou's lemma we have that
\[
\int_\O  F(u_0(x))dx\le \liminf_{s\to 0}\int_\O  F(u(x,s))dx. 
\]
The property of weak continuity of $u_t(t)$ at zero implies that
$\|u_1\|^2\le  \liminf_{s\to 0}\|u_t(s)\|^2$.
Thus we arrive to the relation $\cE(u_0,u_1)\le \cE_*$.
 Therefore from the energy 
{\it inequality}  for weak solutions we obtain 
 (\ref{8.1.4})  for all $t\ge s\ge 0$.
\par
No we conclude the proof of strong continuity of 
$t\mapsto (u(t);u_t(t))$ in $\cH$ at $t=0$.
From the continuity of $t\mapsto\cE(u(t), \.u(t))$ and property
that $u(t)\to u_0$ in $H^1_0(\O)$ as $t\to 0$ one can see by contradiction
 that 
\[ \lim_{t\to 0}\|u_t(t)\|^2= \|u_1\|^2,~~~
\lim_{t\to 0}\int_\O  F(u(x,t))dx =
\int_\O  F(u_0(x))dx.
\]
The first relation implies that  $u(t)$ is continuous in $L_2(\O)$
at $t=0$. It follows from Assumption~\ref{A1}  that
\[
|u(x,t)|^{p+1}\le C_1  F(u(x,t)) +C_2~~\mbox{for almost all} ~~ x\in \O,~ t>0.
\]
We also have that $|u(x,t)|^{p+1}\to |u_0(x)|^{p+1}$ almost everywhere
along some sequence as $t\to 0$. Therefore from the Lebesgue 
dominated convergence theorem we conclude that
\[
 \|u(t)\|^{p+1}_{L_{p+1}(\O)} \to \|u_0\|^{p+1}_{L_{p+1}(\O)} 
~~\mbox{as}~~t\to 0
\]
along a subsequence. Using again  uniform convexity 
of the  space $L_{p+1}(\O)$ we conclude that 
$u(t)$ is strongly continuous in $L_{p+1}(\O)$.
The proof of Theorem~\ref{pr:wp1} is complete.
\end{proof}

\begin{remark}\label{re-mult}
{\rm
We do not know how to avoid the assumption
 $p<p_{**}=(d+4)/(d-4)_+$ (which arises in dimension $d$
greater than 4) in the proof of well-posedness. 
The point is that we cannot use smother 
multipliers like $\cA^{-2l}z_t$ and  $\cA^{-2l}z$
to achieve the goal
because the term $\|\g z\|^2$ goes into picture in the estimate for $G$.
If we will use the multipliers  $\cA^{-2l}z_t$ and  $z$
in the proof of uniqueness of solutions,
then we get a problem with the corresponding two-sided 
estimate for the corresponding analog of the
 function $\Psi(t)$ given by (\ref{psi-t-w}).
\par 
As for the existence of weak solutions  without the requirement 
$p\ge p_{**}$ in the case 
$d\ge 4$   we note that  
the  standard a priori   estimates for $u^N(t)$
(see (\ref{1st-apri}), (\ref{2st-apri}) and (\ref{3rd-apri})) 
can be also
easily obtained in this case. 
The main difficulty in 
this situation is the limit transition in the nonlocal terms 
$\phi(\|u^N(t)\|^2)$ and  $\sigma(\|u^N(t)\|^2)$. To do this we 
can apply the same procedure as in \cite{CCFS98} with
$\sigma=const$, $f(u)\equiv 0$. We do not provide details because
we do not know how establish uniqueness for this case. 
}
\end{remark}
\begin{remark}\label{re:e+}
{\rm In addition to Assumption~\ref{A1}
assume that either  
\begin{equation}\label{Phi-coer}
\Phi(s) \equiv \int_0^s \phi(\xi) d\xi \to+\infty~~\mbox{as}~~s\to+\infty
~~~\mbox{and}~~~ \mu_f>0, 
\end{equation}
or else
\begin{equation}\label{44a}
\hat\mu_\phi:= \liminf_{s\to+\infty} \phi(s)>0~~~\mbox{and}~~
 ~\hat\mu_\phi\la_1 +\mu_f>0,
\end{equation}
where $\mu_f$ is defined by (\ref{f-coercive}) and
$\la_1$ is the first eigenvalue of  the minus Laplace operator
 in $\O$ with the Dirichlet boundary conditions
(if  $\hat\mu_\phi=+\infty$, then $\mu_f>-\infty$ can be arbitrary). 
In this case it
is easy to see that  (\ref{e+w-eta}) holds with $\eta=\nu=0$.
Therefore  the energy relation in  (\ref{8.1.4})
yields
\begin{equation}\label{R-bnd-1}
\sup_{t\in\R_+}\cE^0_+(u(t), u_t(t))\le C_R~~
\mbox{provided}~~ \cE^0_+(u_0, u_1)\le R,
\end{equation}
where $R>0$ is  arbitrary
and  $\cE^0_+$ is defined by (\ref{e+eta}) with $\eta=0$. 
Now using either (\ref{Phi-coer}) of (\ref{44a})
 we can conclude from (\ref{R-bnd-1}) that 
\begin{equation}\label{nabla-sigma}
\sup_{t\in\R_+}\|\g u(t)\|\le C_R~~~\mbox{and}~~~ 
\inf_{t\in\R_+}\s(\|\g u(t)\|^2)\ge \s_R>0.
\end{equation}
 Therefore under the conditions above
the energy relation in (\ref{8.1.4}) along with (\ref{R-bnd-1})  implies that 
\begin{equation}\label{R-bnd}
\sup_{t\in\R_+}\cE^0_+(u(t), u_t(t))+\int_0^\infty  \|\g u_t(\tau)\|^2 d\tau
\le C_R
\end{equation}
for any initial data such that  $\cE^0_+(u_0, u_1)\le R$. 
We note  that in the case considered  
the energy type function $\cE^0_+$ is topologically equivalent
to the norm on $\cH$ in the sense that 
$\cE^0_+(u_0, u_1)\le R$ for some $R>0$ if and only if 
$\|(u_0; u_1)\|_\cH\le R_*$ for some $R_*>0$.
}
\end{remark}

\section{Long-time dynamics}
\label{Sect3}

\subsection{Generation of an evolution semigroup}
By Theorem \ref{pr:wp1} problem \eqref{main_eq} generates an
evolution semigroup   $S(t)$ in
the space $\cH$   by  the formula
\begin{equation}\label{evol-sgr}
S(t)y=(u(t);\.u(t)),~ 
\mbox{where $y=(u_0;u_1)\in\cH$ and $u(t)$ solves (\ref{main_eq})}
\end{equation}
To describe continuity properties of  $S(t)$ 
it is convenient to introduce the following notion.
\begin{definition}[Partially strong topology]\label{de:ss-top}
{\rm
A sequence  $\{(u^n_0;u^n_1)\}\subset \cH$ is said to be 
{\em partially strongly 
convergent} to $(u_0;u_1)\in \cH$ if 
$u^n_0\to u_0$ strongly in $H^1_0(\O)$, $u^n_0\to u_0$ 
weakly in $L_{p+1}(\O)$ and $u^n_1\to u_1$ strongly in $L_{2}(\O)$
as $n\to\infty$ (in the case when $d\le 2$ we take $1<p<\infty$ arbirtary). 
}
\end{definition}
\par 
It is obvious that the partially strong convergence becomes strong
in the non-supercritical case ($H_0^1(\O)\subset L_{p+1}(\O)$).
\begin{proposition}\label{pr:gener}
Let Assumption~\ref{A1} be in force. Then the evolution semigroup 
 $S(t)$ given   by  (\ref{evol-sgr}) is a continuous mapping
in $\cH$ \wrt the strong topology. Moreover,
\begin{enumerate}
    \item[{\bf (A)}] {\bf General case: } For every $t>0$ $S(t)$ maps
$\cH$ into itself continuously in the partially strong topology.
 \item[{\bf (B)}]  {\bf Non-supercritical case ((\ref{f-supercrit}) fails):}
For any $R>0$ and $T>0$ there exists $a_{R,T}>0$ such that 
\begin{equation*}%\label{Lip-sub}
\| S(t)y_1-S(t)y_2\|_\cH \le a_{R,T}\| y_1-y_2\|_\cH,\quad t\in [0,T],
\end{equation*}
for all $y_1,y_2\in\cH=H^1_0(\O)\times L_2(\O)$ such that $\| y_i\|\le R$.
Thus, in this case $S(t)$ is a  locally Lipschitz continuous mapping in $\cH$ 
with respect to the strong topology.
\end{enumerate}
\end{proposition}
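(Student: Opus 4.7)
Plan of proof. I would derive everything from the difference estimate \eqref{dif-bnd}--\eqref{dif-bnd+} of Theorem~\ref{pr:wp1}, combined in part (A) with the smoothing bound \eqref{smoth-prop}, and in the supercritical case of the main strong-topology claim additionally with the energy identity \eqref{8.1.4}.

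For part (B), Remark~\ref{re:relax}(2) makes the Nemytski map $f:H_0^1(\O)\to H^{-1}(\O)$ locally Lipschitz, so $|(f(u^1)-f(u^2),z_t)|\le C_R\|\g z\|\,\|\g z_t\|$, and the remaining summands of the source $G$ in Lemma~\ref{le:z-mult} satisfy the structurally similar bound $C_R(1+\|\g u^1_t\|+\|\g u^2_t\|)\|\g z\|\,\|\g z_t\|$. I would multiply the difference equation \eqref{abs-dif} by $z_t$ on the Galerkin level and pass to the limit. The continuity and strict positivity of $\s$ on $\R_+$ together with the uniform bound $\|\g u^i(t)\|\le R$ from \eqref{basic-bnd} supply a constant $\sigma_R>0$ with $\s_{12}(t)\ge 2\sigma_R$. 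Setting
\[
\Phi(t)=\tfrac12\|z_t(t)\|^2+\bigl[\tfrac14\phi_{12}(t)+M\bigr]\|\g z(t)\|^2
\]
with $M=M(R)$ large enough that $\Phi(t)\ge c_R(\|z_t\|^2+\|\g z\|^2)$, the cancellation of the mixed $\phi_{12}(\g z,\g z_t)$ contributions together with Young's inequality gives
\[
\Phi'(t)+\tfrac14\sigma_R\|\g z_t(t)\|^2\le c_R\bigl(1+\|\g u^1_t\|^2+\|\g u^2_t\|^2\bigr)\Phi(t).
\]
Gronwall combined with the time-integral bound on $\|\g u^i_t\|^2$ coming from \eqref{basic-bnd} produces the Lipschitz estimate. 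This also settles the strong-topology continuity in the non-supercritical case.

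For part (A), let $(u^n_0;u^n_1)\to(u_0;u_1)$ partially strongly in $\cH$. Boundedness of the sequence together with \eqref{basic-bnd} keeps the orbits uniformly bounded on $[0,T]$, and \eqref{dif-bnd} yields $u^n(t)\to u(t)$ in $H_0^1(\O)$ and $u^n_t(t)\to u_t(t)$ in $H^{-1}(\O)$. Since $\{u^n(t)\}$ is bounded in $L_{p+1}(\O)$ by \eqref{basic-bnd}, any weak $L_{p+1}$-limit of a subsequence must coincide with the strong $H_0^1$-limit $u(t)$, so the whole sequence satisfies $u^n(t)\rightharpoonup u(t)$ in $L_{p+1}(\O)$. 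The strong $L_2$-convergence of the velocity for $t>0$ then follows from the $H^{-1}$-convergence, the smoothing bound $\|\g u^n_t(t)\|+\|\g u_t(t)\|\le c_{R,T}t^{-\beta/2}$ supplied by \eqref{smoth-prop}, and the interpolation $\|v\|^2\le\|v\|_{-1}\|v\|_1$.

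For the initial strong-topology claim in the supercritical case, I would upgrade weak to strong $L_{p+1}$-convergence of $u^n(t)$ via \eqref{8.1.4}. Strong convergence of the initial data forces $\cE(u_0^n,u_1^n)\to\cE(u_0,u_1)$; the convergences already established give $\|u^n_t(t)\|\to\|u_t(t)\|$, $\Phi(\|\g u^n(t)\|^2)\to\Phi(\|\g u(t)\|^2)$ and $(h,u^n(t))\to(h,u(t))$; Fatou's lemma (with $F$ bounded below by the coercivity derived from \eqref{f-supercrit}) and weak lower semicontinuity of the dissipation integral furnish $\liminf$-inequalities for the remaining two terms in the identity. Since the total must converge exactly, both inequalities have to saturate, so $\int_\O F(u^n(t))\,dx\to\int_\O F(u(t))\,dx$; combined with the lower bound $F(u)\ge c_0|u|^{p+1}-c_1$, the weak $L_{p+1}$-convergence, and uniform convexity of $L_{p+1}(\O)$, this upgrades to strong $L_{p+1}$-convergence. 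The principal obstacle is precisely this last step: arranging the energy identity so that all $\liminf$-inequalities saturate simultaneously; the other steps are rearrangements of tools already developed in the proof of Theorem~\ref{pr:wp1}.
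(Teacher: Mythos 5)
Your proposal is correct and follows essentially the same route as the paper: part (B) by testing the difference equation with $z_t$ and closing a Gronwall estimate using the dissipation integral from (\ref{basic-bnd}), part (A) from (\ref{dif-bnd}), weak $L_{p+1}$-compactness and the smoothing bound (\ref{smoth-prop}), and the strong-topology continuity by forcing saturation of the lower-semicontinuity inequalities in the energy identity and invoking uniform convexity of $L_{p+1}(\O)$, exactly as in the paper's reduction to the time-continuity argument of Theorem~\ref{pr:wp1}. The only (harmless) deviation is your auxiliary functional in (B), $\tfrac12\|z_t\|^2+[\tfrac14\phi_{12}(t)+M]\|\g z\|^2$ with the cancellation of the mixed term, in place of the paper's perturbed functional $\tfrac12\|z_t\|^2+\eta[(z,z_t)+\tfrac14\s_{12}(t)\|\g z\|^2]$; both are equivalent to $\|z_t\|^2+\|\g z\|^2$ on bounded sets (note only that your $M$ must be allowed to depend on $T$ as well as $R$).
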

\begin{proof}
Let $(u^n_0;u^n_1)\to (u_0;u_1)$ in $\cH$ as $n\to\infty$.
From the energy relation we have that 
\begin{multline}\label{en-lin}
\lim_{n\to\infty}\left[\cE(u^n(t), u^n_t(t))+
\int_0^t \s(\|\g u^n(\tau)\|^2) \|\g u^n_t(\tau)\|^2 d\tau\right]=
\lim_{n\to\infty}\cE(u^n_0, u^n_1) \\
= \cE(u_0, u_1)= \cE(u(t), u_t(t))+
\int_0^t \s(\|\g u(\tau)\|^2) \|\g u_t(\tau)\|^2 d\tau,
\end{multline}
where  $u^n(t)$ and $u(t)$ are weak solutions 
with initial data  $(u^n_0;u^n_1)$ and $(u_0;u_1)$. 
Using (\ref{dif-bnd}) and  the low continuity 
property of  weak convergence one can see from (\ref{en-lin})
that $u^n(t)\to u(t)$ in $H^1_0(\O)$ and also
\[
\lim_{n\to\infty}\left[
\frac12 \|u^n_t(t)\|^2 + \int_\O F(u^n(x,t))dx\right]=
\frac12 \|u_t(t)\|^2 + \int_\O F(u(x,t))dx.
\]
As in the proof of the strong time continuity of weak solutions 
in Theorem~\ref{pr:wp1} this allows us to obtain the 
strong continuity \wrt initial data. 
\par 
Now we establish additional continuity properties  stated in {\bf (A)} and {\bf (B)}.
\par 
{\bf (A)} This easily follows from uniform boundedness of  
$\|u^n_t(t)\|$ and $\|u^n(t)\|_{L_{p+1}(\O)}$ on each interval $[0,T]$ 
(which implies the corresponding weak compactness) and from Lipschitz 
type estimate in (\ref{dif-bnd}) for the difference of two solutions.
We also use the fact that by (\ref{smoth-prop}) $\|\g u^n_t(t)\|$
is uniformly bounded for each $t>0$.
\par 
{\bf (B)} Let $S(t)y_i=(u^i(t);u^i_t(t))$, $i=1,2$. Then in the 
non-supercritical 
case we have (\ref{f-est-h1}). Therefore using (\ref{basic-bnd}) and 
Lemma~\ref{le:z-mult} we obtain that
\begin{equation*}%\label{array-eq1a}
 \frac{d}{dt}\left[ (z,z_t)+\frac14
\s_{12}(t) \|\g z\|^2\right]
\le   \|z_t\|^2+  C_{R,T}\left( 1+\|\g u^1_t\|+|\g u^2_t\|\right) \|\g z\|^2,
\end{equation*}
where $z=u^1-u^2$ and $\s_{12}(t)$ is defined in Lemma~\ref{le:z-mult}.
\par 
In the case considered we can
 multiply equation (\ref{abs-dif}) 
by $z_t$
and obtain that 
\begin{equation}\label{ener-dif}
 \frac12 \frac{d}{dt} \| z_t\|^2+\hf\s_{12}(t)\|\g z_t\|^2 +
G(t) = -\hf\phi_{12}(t)(\g z, \g z_t) 
 \le  C_{R,T} \|\g z_t\| \|\g z\|
\end{equation}
Here above
\begin{equation}\label{G-zt-new}
G(t)\equiv (G(u^1,u^2; t), z_t)=H_1(t)+H_2(t)+H_3(t),
\end{equation}
where 
\[
H_1(t)=  \frac12 [ \s_{1}(t) -\s_{2}(t)] (\g (u^1_t+ u^2_t),\g z_t), 
\]
\[
H_2(t)=  \widetilde\phi_{12}(t) 
 (\g (u^1+u^2),\g z) (\g (u^1+u^2),\g  z_t)
\]
with $\widetilde\phi_{12}(t)$ given by (\ref{phi-tild}),
and $H_3(t)= (f(u^1)-f(u^2), z_t)$.
Using these representations one can see that
\begin{eqnarray*}
|(G(u^1,u^2; t), z_t)| &\le &
  C_{R,T} (1+ \|\g u^1_t\|+\| \g u^2_t\|)\|\g z_t\| \|\g z\| \\
 &\le &
  \eps\|\g z_t\|^2+ C_{R,T,\eps} (1+ \|\g u^1_t\|^2+\| \g u^2_t\|^2)\|\g z\|^2. 
\end{eqnarray*}
for any $\eps>0$. Therefore the function 
\[
V(t)= \frac12  \| z_t\|^2+
\eta \left[ (z,z_t)+\frac14
\s_{12}(t) \|\g z\|^2\right]
\]
for $\eta>0$ small enough satisfies  the relations
\[
 a_{R,T} \left[ \|z_t\|^2+
 \|\g z\|^2\right]\le V(t)\le b_{R,T}  \left[ \|z_t\|^2+
 \|\g z\|^2\right]
\]
and
\[
\frac{d}{dt}V(t)\le c_{R,T} (1+ \|\g u^1_t\|^2+\| \g u^2_t\|^2) V(t) 
\]
with positive  constants $a_{R,T}$, $b_{R,T}$ and $c_{R,T}$.
Thus Gronwall's lemma and the finiteness of the dissipation integral in
(\ref{basic-bnd}) imply the  desired conclusion.
\end{proof}

\begin{remark}\label{re:gradient} {\rm 
One can see from the energy relation in \eqref{8.1.4}
that the dynamical  system  generated by semigroup 
$S(t)$ is gradient  on $\cH$ (with respect to the strong topology), i.e.,  
 there exists a continuous functional $\Psi(y)$ on $\cH$ 
(called a \textit{strict Lyapunov function})  possessing  the properties  
(i) $\Psi\big(S(t)y\big) \leq \Psi(y)$  for all $t\geq 0$ and $y\in \cH$; 
(ii) equality $\Psi(y)=\Psi(S(t)y)$ may take place for all $t>0$ if only $y$
is a stationary point of $S(t)$.
In our case  the full energy $\cE(u_0;u_1)$ is a strict Lyapunov function.
}
\end{remark}
\subsection{Dissipativity}
Now we establish some dissipativity properties
of the semigroup $S(t)$.
Fro this we need the following hypothesis.
\begin{assumption}\label{A:dis}
{\rm 
We assume\footnote{
Under these additional
conditions the properties in (\ref{phi-si-coer}) and (\ref{phi-s-si})
holds automatically with $\eta_0=c_1=0$.
} that either (\ref{44a}) holds or else
\begin{equation}\label{phi-s}
 \phi(s)s\to +\infty~~ as ~~ s
\to +\infty~~~\mbox{and}~~~\mu_f=\liminf_{|s|\to\infty}\left\{ s^{-1}f(s)\right\}>0. 
\end{equation}
}
\end{assumption}
\begin{proposition}\label{pr:dis}
Let Assumptions~\ref{A1} and \ref{A:dis} be in force. 
Then 
there exists $R_*>0$ such that for any $R>0$ we can find
$t_R\ge 0$ such that 
\begin{equation*}%\label{diss-est}
 \|(u(t); u_t(t))\|_\cH\le R_*~~\mbox{for all}~~ t\ge t_R,  
\end{equation*}
where $u(t)$ is a solution to (\ref{main_eq})
with  initial data $(u_0;u_1)\in\cH$
such that  $\|(u_0;u_1)\|_\cH\le R$.
In particular, the evolution semigroup $S(t)$ is 
dissipative in $\cH$ and
\begin{equation}\label{bs-abs}
\mathscr{B}_*=\left\{ (u_0;u_1)\in\cH\, :\; \|(u_0;u_1)\|_\cH\le R_*\right\}
~~~is~ an~ absorbing~ set.  
\end{equation}
\end{proposition}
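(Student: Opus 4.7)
The plan is to combine Remark~\ref{re:e+} with a Lyapunov functional that yields exponential absorption into a ball of radius independent of the initial data. Assumption~\ref{A:dis} implies the hypotheses of Remark~\ref{re:e+}: condition (\ref{phi-s}) forces $\Phi(s)\to+\infty$, so (\ref{Phi-coer}) holds, and (\ref{44a}) is used directly. Consequently, the bound (\ref{R-bnd-1}) already gives $\sup_{t\ge 0}\|(u(t);u_t(t))\|_\cH \le C_R$ and hence $\s(\|\g u(t)\|^2) \ge \s_R > 0$ uniformly in time. What remains is to replace the $R$-dependent bound by an absorbing one.

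The natural candidate is the perturbed energy already used in the proof of Theorem~\ref{pr:wp1}:
\[
W(t) = \cE(u(t),u_t(t)) + \eta\bigl[(u(t),u_t(t)) + \tfrac12\Sigma(\|\g u(t)\|^2)\bigr],\qquad \eta>0\ \text{small}.
\]
Testing (\ref{main_eq}) with $u$ and using $\tfrac{d}{dt}\tfrac12\Sigma(\|\g u\|^2) = \s(\|\g u\|^2)(\g u,\g u_t)$, together with the energy identity (\ref{8.1.4}), yields
\[
\dot W + \s(\|\g u\|^2)\|\g u_t\|^2 = \eta\bigl[\|u_t\|^2 - \phi(\|\g u\|^2)\|\g u\|^2 - (f(u),u) + (h,u)\bigr].
\]
Since $\s(\|\g u\|^2) \ge \s_R$, Poincar\'e absorbs $\eta\|u_t\|^2$ into the dissipative term once $\eta$ is small enough, and, as in Remark~\ref{re:e+}, the functional $W$ is two-sidedly equivalent to $\|(u;u_t)\|_\cH^2$ up to additive constants.

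The core of the argument is then to show that the right-hand side is bounded above by $-\gamma W + K$ with $\gamma,K>0$ depending only on the model parameters (not on $R$). Under (\ref{phi-s}), $\phi(s)s$ is eventually larger than any prescribed constant and $\mu_f>0$ gives $(f(u),u)\ge \tfrac12\mu_f\|u\|^2 - C$; in the supercritical case (\ref{f-supercrit}) additionally provides an Ambrosetti--Rabinowitz type bound $(f(u),u)\ge \alpha\int_\O F(u)\,dx - C$, which is needed to dominate the $L_{p+1}$-piece of $\cE$. The delicate case is (\ref{44a}): here $\phi$ need only be asymptotically positive and $\mu_f$ may be negative, so one must not treat the $\phi$- and $f$-terms separately but pair them using $\|\g u\|^2 \ge \la_1 \|u\|^2$, so that the effective coefficient in front of $\|u\|^2$ is $\hat\mu_\phi\la_1 + \mu_f$, which is exactly what (\ref{44a}) assumes to be positive. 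This coupled estimate, handled for $\|\g u\|$ large via the uniform bound from Remark~\ref{re:e+} and for the bounded-$\|\g u\|$ regime by elementary continuity of $\phi$, is the main technical point. Once $\dot W + \gamma W \le K$ is secured, Gronwall gives $W(t) \le W(0)e^{-\gamma t} + K/\gamma$, and the equivalence with $\|(u;u_t)\|_\cH^2$ produces the absorbing ball $\mathscr{B}_*$ in (\ref{bs-abs}) with entry time $t_R = O(\gamma^{-1}\log(1+R))$.
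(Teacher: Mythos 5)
Your setup coincides with the paper's: the multiplier $u$, the functional $W=\cW^{\eta,0}$ from (\ref{w+eta}), the uniform bounds of Remark~\ref{re:e+} giving $\s(\|\g u(t)\|^2)\ge\s_R>0$, and the Poincar\'e pairing that makes $\hat\mu_\phi\la_1+\mu_f>0$ effective in the case (\ref{44a}) are all exactly what the paper does. The gap is in the concluding step. You propose to secure $\frac{d}{dt}W+\gamma W\le K$ with $\gamma,K$ independent of $R$ and then apply Gronwall. Under the paper's hypotheses this inequality is not available: the dissipation produced by the multiplier is $D=\|u_t\|^2+\phi(\|\g u\|^2)\|\g u\|^2+d_0\|u\|^{p+1}_{L_{p+1}}+\|u\|^2$, while $W$ contains $\Phi(\|\g u\|^2)$, $\Sigma(\|\g u\|^2)$ and $\int_\O F(u)\,dx$, and none of these is dominated by $D$ with $R$-independent constants. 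For instance, Assumption~\ref{A:dis} admits $\phi$ with $\phi(s)s=\log(1+s)$, so $\phi(s)s\to\infty$ but $\Phi(s)\sim s\gg \phi(s)s$; similarly the upper bound in (\ref{e+w-eta}) involves an arbitrary monotone function $M(\|\g u_0\|^2)$, so $W$ is \emph{not} two-sidedly equivalent to $\|(u;u_t)\|^2_\cH$ up to additive constants, contrary to what you assert. If instead you estimate these terms along the trajectory via $\sup_t\|\g u(t)\|\le C_R$, you only get $K=K_R$, and Gronwall then yields a limiting radius depending on $R$ --- which is exactly what the proposition must rule out.

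The paper's way around this is the barrier (ultimate boundedness) argument: from $\frac{d}{dt}W+\eta a_0 D\le \eta a_1$ one only needs that $D$ is large whenever $W$ is large (which does follow from $\phi(s)s\to\infty$, respectively $\hat\mu_\phi>0$, together with the coercivity of $\cE^\eta_+$), so that $\frac{d}{dt}W\le -\eta$ outside a fixed sublevel set of $W$; hence $W$ enters and remains in that sublevel set in finite time, and the lower bound in (\ref{e+w-eta}) converts this into a radius $R_*$ independent of $R$. This produces an absorbing set but, consistently with the weakness of the hypotheses, no exponential rate; your claimed entry time $t_R=O(\gamma^{-1}\log(1+R))$ is likewise not justified in this generality.
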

\begin{proof}
Let
$u(t)$ be a solution to (\ref{main_eq})
with  initial data possessing the property  $\|(u_0;u_1)\|_\cH\le R$.
Multiplying   equation (\ref{main_eq}) by $u$ we obtain that  
\begin{eqnarray*}
 \frac{d}{dt}\left[ (u,u_t)+\frac12
\Sigma(\|\g u\|^2)\right] -\| u_t\|^2+  \phi(\|\g u\|^2)\|\g u\|^2+
(f(u), u)-(h,u) =0, 
\end{eqnarray*}
where
$\Sigma(s)=\int_0^s\s(\xi)d\xi$.
Therefore using the energy relation in (\ref{8.1.4}) for
the function 
$W(t)=\cW^{\eta,0}(u(t),u_t(t))$ with $\cW^{\eta,\nu}$ given by (\ref{w+eta})
we obtain that 
\begin{eqnarray*}
 \frac{d}{dt}W(t)+
\sigma(\|\g u\|^2) \|\g u_t\|^2 -\eta \| u_t\|^2 +
\eta  \phi(\|\g u\|^2)\|\g u\|^2+
\eta (f(u), u)-\eta (h,u) =0. 
\end{eqnarray*}
Since (\ref{phi-s}) implies (\ref{Phi-coer}), we 
have (\ref{nabla-sigma}).
By (\ref{f-coercive}) and (\ref{f-supercrit}) we have that
\[
(u,f(u))\ge d_0\|u\|^{p+1}_{L_{p+1}(\O)}
+d_1(\mu_f-\delta)\|u\|^2 -d_2(\delta),~~ 
\forall\, \delta>0,
\]
where $d_0>0$, $d_1=0$ in the supercritical case
and  $d_0=0$, $d_1=1$ in other cases.
In both cases (either (\ref{44a}) or (\ref{phi-s})) this yields 
\begin{eqnarray*}
 \frac{d}{dt}W(t)+
(\sigma_R -\eta) \| u_t\|^2 +
\eta c_0 \phi(\|\g u\|^2)\|\g u\|^2+
\frac{\eta d_0}{2} \|u\|^{p+1}_{L_{p+1}(\O)} +\eta c_1\|u\|^2\le \eta c_2 
\end{eqnarray*}
with positive $c_i$ independent of $R$ and $d_0>0$ 
in the supercritical case only.
Thus  
there exist constants $a_0, a_1>0$ independent of $R$ and
also $0<\eta_R\le 1$ such that
\begin{equation*}%\label{W-eta}
\frac{d}{dt}\cW^{\eta,0}(u(t),u_t(t))+
\eta a_0 \left[ \| u_t\|^2 +
 \phi(\|\g u\|^2)\|\g u\|^2+
 d_0 \|u\|^{p+1}_{L_{p+1}(\O)} + \|u\|^2\right] \le \eta a_1, 
\end{equation*}
for all initial data $(u_0;u_1)\in\cH$
such that   $\|(u_0;u_1)\|_\cH\le R$ and for each $0<\eta\le \eta_R$.
Moreover, for this choice of $\eta$ we  have relation (\ref{e+w-eta})
with $\nu=0$ and  $a(\eta)\ge a(0)$. 
Therefore using the "barier" method (see, e.g.,
 \cite[Theorem 1.4.1]{Chueshov}  and \cite[Theorem 2.1]{Kolbasin})
 we can  conclude the proof.
\end{proof}
\begin{remark}\label{re:abs-inv}
{\rm Let  
$\sB_0=\left[\bigcup_{t\ge 1+ t_*}S(t)\sB_* \right]_{ps}$, where 
$\sB_*$ is given by (\ref{bs-abs}), $t_*\ge0$ is chosen such that  
$S(t)\sB_*\subset \sB_*$ for $t\ge t_*$ and $[\cdot]_{ps}$ denotes 
the closure in the partially strong topology. By the standard argument 
(see, e.g., \cite{Temam}) one can see that $\sB_0$ is a 
closed forward invariant bounded absorbing set which lies in $\sB_*$.
Moreover, by (\ref{smoth-prop}) the set $\sB_0$ is bounded
in $H_0^1(\O)\times H^1_0(\O)$.
}
\end{remark}
For  a strictly positive stiffness coefficient we can also prove 
a dissipativity property in the space 
$\cH_*= (H^2\cap H^1_0)(\O)\times L_2(\O)$\footnote{We note
that $\cH_*\subset\cH$ because $H^2(\O)\subset L_{p+1}(\O)$ 
for $p<p_{**}$.}.
Indeed, we have the following assertion.
\begin{proposition}\label{pr:dis-h2}
In addition to  the hypotheses of Proposition~\ref{pr:dis}
we assume that $\phi(s)$ is strictly positive
(i.e., $\phi(s)\ge\phi_0>0$  for all $s\in\R_+$) 
and $f'(s)\ge -c$ for all $s\in\R$ in the case when (\ref{f-crit})
holds with $p=p_*$.
Let $u(t)$ be a solution to (\ref{main_eq})
with  initial data $(u_0;u_1)\in\cH$
such that $u_0\in  H^2(\O)$ and
 $\|(u_0;u_1)\|_\cH\le R$ for some $R$.
Then 
there exist $B>0$ and $\gamma>0$ independent of $R$
and $C_R>0$  such that  
\begin{equation}\label{dis-h2}
 \|\Delta u(t)\|^2\le C_R  (1+\|\Delta u_0\|^2)
e^{-\gamma t} +B ~~\mbox{for all}~~ t\ge 0.
\end{equation}
\end{proposition}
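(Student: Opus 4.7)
The plan is to combine the dissipativity of Proposition~\ref{pr:dis} in $\cH$ with a multiplier argument based on $-\D u$ to build a Lyapunov functional on $\cH_*$ that decays exponentially modulo an additive constant. First, Proposition~\ref{pr:dis} yields $t_R\ge 0$ with $\|(u(t);u_t(t))\|_\cH\le R_*$ for all $t\ge t_R$. Together with $\phi(s)\ge\phi_0>0$, the continuity and positivity of $\s$, and the energy identity~\eqref{8.1.4}, this provides \emph{absolute} bounds
\[
\s_0\le\s(\|\g u(t)\|^2)\le\s_1,\quad \phi_0\le\phi(\|\g u(t)\|^2)\le\phi_1,\quad \int_{t_R}^\infty\|\g u_t(\t)\|^2\,d\t\le C_*
\]
for $t\ge t_R$. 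Since $\|\D u(t)\|^2\le C_R(1+\|\D u_0\|^2)$ on $[0,t_R]$ by~\eqref{sm-bnd}, it suffices to prove exponential decay on $[t_R,\infty)$ with rate $\ga$ and additive constant $B$ independent of $R$.

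I would then test~\eqref{main_eq} against $-\D u$ (justified on Galerkin approximations and passed to the limit via part~4 of Theorem~\ref{pr:wp1}) to obtain the analogue of~(\ref{20a}):
\[
\frac{d\Lambda}{dt}+\phi_0\|\D u\|^2+(f'(u),|\g u|^2)\le\|\g u_t\|^2+\s'(\|\g u\|^2)(\g u,\g u_t)\|\D u\|^2+\|h\|\|\D u\|,
\]
with $\Lambda(t)=-(u_t,\D u)+\tfrac12\s(\|\g u\|^2)\|\D u\|^2$. The term $(f'(u),|\g u|^2)$ is controlled case-by-case: in the supercritical case by the lower bound in~\eqref{f-supercrit}; in the critical case $p=p_*$ by the extra hypothesis $f'(s)\ge -c$ giving $(f'(u),|\g u|^2)\ge -cR_*^2$; and in the remaining (sub)critical cases by rewriting $(f'(u),|\g u|^2)=-(f(u),\D u)$ and using Sobolev plus Gagliardo--Nirenberg interpolation to estimate $|(f(u),\D u)|\le\delta\|\D u\|^2+C_{R_*,\delta}$ for any $\delta>0$. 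The Kelvin--Voigt correction is bounded by $C_{R_*}\|\g u_t\|\|\D u\|^2$, while $\|h\|\|\D u\|$ is absorbed into $\tfrac{\phi_0}{4}\|\D u\|^2+C$. Since $|(u_t,\D u)|\le\alpha\|u_t\|^2+\tfrac{1}{4\alpha}\|\D u\|^2$ and $\|u_t\|^2\le R_*^2$, there exist absolute $A,B_1>0$ with $A\|\D u\|^2-B_1\le\Lambda(t)\le A^{-1}\|\D u\|^2+B_1$, and substitution yields
\[
\frac{d\Lambda}{dt}+\ga_0\Lambda\le h(t)\Lambda+g(t),\qquad t\ge t_R,
\]
with $\ga_0>0$ absolute, $h(t)=C'_{R_*}\|\g u_t(t)\|$, and $g(t)=\|\g u_t(t)\|^2+C$.

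The final step is a Gronwall argument with variable coefficient. By Cauchy--Schwarz and the dissipation integral, $\int_{t_R}^t h(s)\,ds\le C\sqrt{(t-t_R)C_*}\le\tfrac{\ga_0}{2}(t-t_R)+K$ for an absolute $K$, so
\[
\Lambda(t)\le\Lambda(t_R)e^{-\ga_0(t-t_R)/2+K}+\int_{t_R}^t e^{-\ga_0(t-s)/2+K}g(s)\,ds.
\]
The residual integral is uniformly bounded in $t$ because the constant part of $g$ convolved with the exponential kernel gives $O(1/\ga_0)$ and the $\|\g u_t\|^2$-part is bounded by $e^K C_*$. Combining with $\Lambda(t_R)\le C_R(1+\|\D u_0\|^2)$ and the $[0,t_R]$-bound from~\eqref{sm-bnd} yields~\eqref{dis-h2} after adjusting constants. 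The principal obstacle is the multiplicative perturbation $h(t)\Lambda$ from the state-dependent damping $\s(\|\g u\|^2)$: since $\|\g u_t\|$ is only $L^2$ in time, it cannot be pointwise absorbed, and the argument depends crucially on the sub-linear growth $\int h\le C\sqrt{t}$ which keeps the perturbation dominated by the linear decay rate $\ga_0 t$.
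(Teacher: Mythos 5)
Your proposal is correct and follows essentially the same route as the paper: reduce to the absorbing ball from Proposition~\ref{pr:dis}, form the functional $\chi(t)=-(u_t,\D u)+\tfrac12\s(\|\g u\|^2)\|\D u\|^2$ from the $-\D u$ multiplier identity (\ref{20a}), establish its two-sided equivalence with $\|\D u\|^2$, and run a perturbed Gronwall argument controlled by the finiteness of $\int\|\g u_t\|^2\,dt$. The only (harmless) deviations are that you handle the state-dependent damping term via $\int\|\g u_t\|\le C\sqrt{t}$ where the paper first applies Young's inequality to make the perturbation coefficient $\|\g u_t\|^2\in L^1$, and that you spell out the interpolation estimate for $(f'(u),|\g u|^2)$ in the subcritical case, which the paper leaves implicit.
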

\begin{proof}
By Proposition~\ref{pr:dis} we have that  $\|(u(t);u_t(t)\|\le R_*$
fo all $t\ge t_R$. Therefore it follows from (\ref{20a}) that
\[
\frac{d}{dt}\chi(t)
+
\frac{\phi_0}2 \|\D u(t)\|^2\le 
\|\nabla  u_t(t)\|^2+    C_{R_*}\|\g u_t(t)\|^2\|\D u(t)\|^2 
 +C_{R_*} ~~\mbox{for all}~~ t\ge t_R,  
\]
where 
$\chi(t)=-(u_t(t),\D u(t))+ \s(\|\g u(t)\|^2)\|\D u(t)\|^2/2$.
One can see that
\begin{equation}\label{hi-est}
a_1 \|\D u(t)\|^2 -a_2 \le \chi(t)
\le  a_3 \|\D u(t)\|^2 +a_4 ~~\mbox{for all}~~ t\ge t_R,  
\end{equation}
where $a_i=a_i(R_*)$ are positive constants.
Therefore using the finiteness of the dissipation integral
$\int_{t_R}^\infty\|\nabla  u_t(t)\|^2dt<C_{R_*}$ we can conclude that
\[
\chi(t) \le C_R  |\chi(t_R)|
e^{-\gamma (t-t_R)} +C_{R_*} ~~\mbox{for all}~~ t\ge t_R.
\]
Thus (\ref{hi-est}) and (\ref{sm-bnd}) yield (\ref{dis-h2}).
\end{proof}
\begin{remark}\label{re:w-attr}
{\rm
Using (\ref{sm-bnd}) one can show that the evolution operator
$S(t)$ generated by (\ref{main_eq}) maps the space
$\cH_*= (H^2\cap H^1_0)(\O)\times L^2(\O)$ 
into itself  and weakly continuous
\wrt  $t$ and initial data.
Therefore under the hypotheses of Proposition~\ref{pr:dis-h2} 
by \cite[Theorem~1, Sect.II.2]{BabinVishik}  $S(t)$
possesses a weak global attractor in $\cH_*$. Unfortunately we cannot derive from 
Proposition~\ref{pr:dis}  a similar result in the space $\cH$
because we cannot prove that $S(t)$
is a weakly closed mapping in $\cH$ 
(a mapping $S :\cH\mapsto\cH$ is said to be weakly
closed if  weak convergences $u_n\to u$ and $Su_n\to v$
imply $Su=v$). Below we prove the existence of a global attractor
in $\cH$ under additional hypotheses concerning the 
stiffness coefficient $\phi$.
}
\end{remark}

\subsection{Asymptotic compactness}
In this section we prove several properties of asymptotic 
compactness of the semigroup $S(t)$. 
\par
We start with the following 
theorem.

\begin{theorem}\label{th:ak}
Let Assumptions~\ref{A1} and \ref{A:dis} be in force.
Assume also that $\phi(s)$ is strictly positive
(i.e., $\phi(s) >0$  for all $s\in\R_+$) 
and $f'(s)\ge -c$ for all $s\in\R$ in 
the non-supercritical case (the bounds in (\ref{f-supercrit})
are not valid). 
Then there exists a bounded set $\sK$ 
in the space $\cH_1=(H^2\cap H^1_0)(\O)\times H^1_0(\O)$ 
and  the constants $C,\ga>0$ such that 
\begin{equation}\label{exp-atr}
\sup\left\{ {\rm dist}_{ H^1_0(\O)\times H^1_0(\O)}(S(t)y, \sK)\, :\; 
y\in B\right\}\le C e^{-\gamma (t-t_B)},~~~ t\ge t_B,
\end{equation}
for any bounded set $B$ from $\cH$. Moreover, we have
that $\sK\subset \sB_0$, where $\sB_0$ is the 
positively invariant set  constructed in Remark~\ref{re:abs-inv}.
\end{theorem}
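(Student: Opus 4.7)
The plan is to decompose $S(t)$, restricted to the forward-invariant absorbing set $\sB_0$, into a linear part with exponential decay in $\cH_1$ and a nonlinear part uniformly bounded in $\cH_1$. First, by Proposition~\ref{pr:dis} and Remark~\ref{re:abs-inv}, any bounded $B\subset\cH$ is absorbed (in some time $t_B$) into the positively invariant $\sB_0$, which is bounded in $H^1_0(\O)\times H^1_0(\O)$. Along trajectories emanating from $\sB_0$ the Kirchhoff coefficients remain pinched between positive constants: $0<\sigma_0\le\sigma(\|\g u\|^2)\le\sigma_1$ and $0<\phi_0\le\phi(\|\g u\|^2)\le\phi_1$.

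For $y\in\sB_0$ I would split $S(t)y = L(t)y + K(t)y$, where $(v,v_t) = L(t)y$ solves the constant-coefficient linear strongly damped problem
\[
v_{tt} - \sigma_0\D v_t - \phi_0\D v + \mu v = 0, \qquad (v,v_t)(0) = y,
\]
with $\mu>0$ chosen large enough for exponential stability, and $(w,w_t) = K(t)y$ solves the corresponding non-homogeneous equation with zero initial data and forcing
\[
G(t) = -f(u) + h + \mu v + [\sigma(\|\g u\|^2)-\sigma_0]\D u_t + [\phi(\|\g u\|^2)-\phi_0]\D u.
\]
The first-order realization of the linear left-hand operator generates an exponentially stable analytic semigroup on $\cH$; combined with analytic smoothing this yields $\|L(t)y\|_{\cH_1}\le Ce^{-\gamma t}\|y\|_{\cH}$ for $t\ge 1$, so $L(t)$ decays exponentially even in the $H^1\times H^1$ norm.

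For $K(t)y$ the Duhamel representation together with the analytic smoothing estimates gives an $\cH_1$-bound of the form $\|K(t)y\|_{\cH_1}\le \int_0^t C(t-s)^{-\alpha}e^{-\gamma(t-s)}\|G(s)\|_{L^2}\,ds$ for some $\alpha<1$. The forcing $G$ is controlled uniformly for $y\in\sB_0$ using (i) the non-supercriticality hypothesis together with $f'(u)\ge -c$, which yields $\|f(u)\|\le C_{R_*}$; (ii) the pointwise bound $\|\g u_t(t)\|\le C_{R_*}$ furnished by (\ref{smoth-prop}) propagated via forward invariance of $\sB_0$; and (iii) Proposition~\ref{pr:dis-h2} applied along Galerkin approximations to control the troublesome term $[\phi-\phi_0]\D u$ uniformly in the approximation parameter. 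This gives $\|K(t)y\|_{\cH_1}\le C$ uniformly in $t\ge 0$ and $y\in\sB_0$. I then define $\sK$ as the $H^1\times H^1$-closure of $\{K(t)y : t\ge T,\ y\in\sB_0\}$ for some large $T$. By construction $\sK$ is bounded in $\cH_1$, and the exponential decay of $L$ combined with the absorption time $t_B$ into $\sB_0$ yields (\ref{exp-atr}); forward invariance of $\sB_0$ and the identity $S(t)y = L(t)y + K(t)y$ give $\sK\subset\sB_0$.

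The main obstacle is the uniform $\cH_1$-bound on $K(t)y$, specifically the correction term $[\phi-\phi_0]\D u$ appearing in $G$. Since generic $y\in\sB_0$ does not produce $u(t)\in H^2$ at any positive time, $\D u$ exists only in $H^{-1}$ pointwise, so the $L^2$-estimate of $G$ cannot be obtained directly. The resolution is a Galerkin approximation in which Proposition~\ref{pr:dis-h2} furnishes a uniform-in-$N$ $H^2$-estimate, followed by a careful passage to the limit in the Duhamel convolution that exploits the integrable singularity $(t-s)^{-\alpha}$ and the exponential weight $e^{-\gamma(t-s)}$ of the analytic smoothing.
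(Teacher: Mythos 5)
Your decomposition $S(t)y=L(t)y+K(t)y$ with a \emph{constant-coefficient} strongly damped linear group and a Duhamel correction has two gaps that I do not see how to repair. First, the key estimate $\|L(t)y\|_{\cH_1}\le Ce^{-\gamma t}\|y\|_{\cH}$ is false: the semigroup generated by $v_{tt}-\s_0\D v_t-\phi_0\D v+\mu v=0$ is analytic and exponentially stable on $\cH$, but it does \emph{not} smooth the displacement component. On the $k$-th eigenmode the characteristic roots are $\approx-\s_0\la_k$ and $\approx-\phi_0/\s_0$; the slow branch carries the datum $\widehat{u}_{0,k}$ essentially unchanged, so $\|v(t)\|_{H^2}\sim e^{-\gamma t}\|u_0\|_{H^2}$, which is infinite for $u_0\in H^1_0(\O)\setminus H^2(\O)$ (membership in $D(A^k)$ controls only combinations such as $\s_0\D v_t+\phi_0\D v$, never $\D v$ alone). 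Second, the forcing $G$ is not uniformly in $L^2$: besides $[\phi(\|\g u\|^2)-\phi_0]\D u$, which you flag, the term $[\s(\|\g u\|^2)-\s_0]\D u_t$ is equally only $H^{-1}$-valued (weak solutions have $u_t(t)\in H^1_0(\O)$, not $H^2(\O)$), these coefficient differences are not small, and $\|f(u)\|\le C$ fails in the critical and supercritical cases, which the theorem covers. Your proposed rescue via Proposition~\ref{pr:dis-h2} along Galerkin approximations does not give uniformity: the bound (\ref{dis-h2}) reads $C_R(1+\|\D u^N(0)\|^2)e^{-\gamma t}+B$, and $\|\D u^N(0)\|\to\infty$ when $u_0\notin H^2(\O)$, so no uniform-in-$N$ $L^2$ bound on $[\phi-\phi_0]\D u^N$ results on any fixed time interval; indeed none can exist, since the equation provides no $H^2$-smoothing of the displacement and $u(t)\notin H^2(\O)$ for such data at any finite time.

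The paper's proof avoids both obstacles by splitting differently. It keeps the actual time-dependent scalar coefficients $\s(t)=\s(\|\g u(t)\|^2)$ and $\phi(t)=\phi(\|\g u(t)\|^2)$ (pinched between positive constants on $\sB_0$) in \emph{both} pieces, drops the second time derivative from the split equations, and moves $u_{tt}$ to the right-hand side: $w$ solves $-\s(t)\D w_t-\phi(t)\D w+\nu w+f(w)=h_u(t)$ with $h_u=-u_{tt}+\nu u+h$ and $w(0)=0$, while the difference $v$ solves the corresponding homogeneous parabolic problem with datum $u_0$. The forcing $h_u$ \emph{is} uniformly square integrable on unit time intervals thanks to the smoothing estimate (\ref{smoth-prop}) combined with (\ref{R-bnd}); the multiplier $-\D w$ then gives a uniform $H^2$ bound for $w$, the multiplier $v$ gives exponential decay of $\|\g v(t)\|$, and the velocity component is handled by the uniform bound on $\|\g u_t(t)\|$ from (\ref{reg0-new}). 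If you wish to retain a semigroup/Duhamel formulation, the only term that can legitimately be placed in the forcing with uniform $L^2$ control is $u_{tt}$, not $\D u$, $\D u_t$ or $f(u)$; as it stands, your argument is circular at exactly the point where uniform $H^2$ information on $u$ is needed.
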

\begin{proof}
We use a splitting method relying on the idea presented
in \cite{PataZelik06} (see also \cite{KaZe09}).
\par  
We first note that  it is sufficient to prove (\ref{exp-atr})
for $B=\sB_0$, where $\sB_0\subset \cH\cap ( H^1_0\times H^1_0)(\O)$
is the invariant absorbing set constructed in Remark~\ref{re:abs-inv}.
\par 
 From  (\ref{smoth-prop}) and (\ref{R-bnd}) we obviously have that
\begin{equation}\label{reg0-new}
\|\g u (t)\|^2+ \|\g u_{t}(t)\|^2+
\int_t^{t+1}\|u_{tt}(\t)\|^2d \t+\int_0^{\infty}\|\g u_{t}(\t)\|^2d \t \le C_{\sB_0},~~~ t\ge 0,
\end{equation} 
for
any solution $u(t)$ with initial data  $(u_0;u_1)$
from  $\sB_0$.
 Thus  we need only to show that 
there exists a ball $B=\{u\in  (H^2\cap H^1_0)(\O):  \|\D \|\le \rho\}$
which attracts in $H^1_0(\O)$ 
 any solution $u(t)$  satisfying (\ref{reg0-new}) with uniform
 exponential rate. 
\par 
We denote $\s(t)=\s(\|\g u(t)\|^2)$
and $\phi(t)= \phi(\|\g u(t)\|^2)$.
Since  both $\s$ and $\phi$ are strictly positive,
we  have that
\[
0<c_1\le \s(s), \phi(s)\le c_2,~~~ t\ge 0,
\]
where the constants $c_1$  and $c_2$ depend  only 
on the size of the absorbing set $\sB_0$.
Let $\nu>0$ be a parameter (which we choose large enough).
Assume that $w(t)$ solves the  problems  
\begin{equation}\label{split1}
    \left\{ \begin{array}{l}
         -\s(t)\D w_t - \phi(t)\D w+\nu w+ f(w)=h_u(t)\equiv -u_{tt}+\nu u+ h(x),~~ x\in\O,\; t>0,
\\ [2mm]
w|_{\d\O}=0,~~~ w(0)=0.
    \end{array} \right.
\end{equation}
Then  one can see that  $v(t)=w(t)-u(t)$
satisfies the equation
\begin{equation}\label{split2}
    \left\{ \begin{array}{l}
         -\s(t)\D v_t - \phi(t)\D v+\nu v+ f(w+v)-f(w)=0,~~ x\in\O,\;
 t>0,\\ [2mm]
v|_{\d\O}=0,~~~ v(0)=u_0. 
    \end{array} \right.
\end{equation}
As in the proof of Proposition~\ref{pr:dis-h2}
using the multiplier $-\D w$ in (\ref{split1}) one can see that
\[
\hf \frac{d}{dt}\left[ \s(t) \|\D w(t)\|^2 \right]
+
\phi(t) \|\D w(t)\|^2\le 
  \left[ \eps+ C_\eps \|\g u_t(t)\|^2\right]\|\D w(t)\|^2 
 +C_\eps \|h_u(t)\|^2   
\]
for all $t> 0$. 
Therefore using Gronwall's type argument
and the bounds in (\ref{reg0-new}) we obtain that
\begin{equation}\label{split3}
\|\D w(t)\|^2\le C\int_0^t e^{-\gamma (t-\t)} \|h_u(\t)\|^2 d\t
 \le C_{\sB_0}, ~~~ \forall\, t\ge 0.   
\end{equation}
where $C_{\sB_0}>0$ does not depends on $t$.
\par 
Multiplying (\ref{split2}) by $v$ in a similar way we obtain 
\[
\hf \frac{d}{dt}\left[ \s(t) \|\g v(t)\|^2 \right]
+
\phi(t) \|\g v(t)\|^2\le 
  \left[ \eps+ C_\eps \|\g u_t(t)\|^2\right]\|\g v(t)\|^2,~~~ t\ge 0, 
\]
which implies that
\begin{equation}\label{split4}
\|\g v(t)\|^2\le C \|\g u(0)\|^2 e^{-2\gamma t},~~~ t\ge 0.   
\end{equation}
Let $\cB= \{u\in H^2(\O)\cap H^1_0(\O) :  \|\D u\|^2\le C_{\sB_0}\}$,
where $C_{\sB_0}$ is the constant from (\ref{split3}).
It follows from  (\ref{split3}) and (\ref{split4}) that
\begin{equation}\label{split5}
{\rm dist}_{H_0^1(\O)}(u(t), \cB) =\inf_{b\in \cB} \|w(t)+v(t)- b\|_1
\le  \|v(t)\|_1\le C  e^{-\gamma t}, ~~~ t\ge 0.
\end{equation}
This implies the existence of the   set $\sK$
desired in the statement of the Theorem~\ref{th:ak}. 
\end{proof}
Now we consider the set $\sB_0$ defined in Remark~\ref{re:abs-inv}
 as a topological space equipped with  the partially strong 
topology (see Definition~\ref{de:ss-top}). Since 
$\sB_0$ bounded in $\cH\cap (H^1_0\times H^1_0)(\O)$, this topology
can be defined by the metric
\begin{equation}\label{metric}
\cR(y,y^*)=\|u_0-u_0^*\|_1 + \|u_1-u_1^*\|+\sum_{n=1}^\infty 
2^{-n}\frac{|(u_0-u_0^*, g_n)|}{1+|(u_0-u_0^*, g_n)|}
\end{equation}
for $y=(u_0;u_1)$  and $y^*=(u^*_0;u^*_1)$ from $\sB_0$,
where $\{ g_n\}$ is  a sequence in $L_{(p+1)/p}(\O)\cap H^{-1}(\O)$ 
such that
$\|g_n\|_{-1}=1$ and Span$\{ g_n : n\in\N \}$ is dense 
 in $L_{(p+1)/p}(\O)$.
\begin{corollary}\label{co:ak}
Let the hypotheses of Theorem~\ref{th:ak} be in force and
 $\sK$ and $\sB_0$ be the same sets as in Theorem~\ref{th:ak}. Then
there exist $C,\gamma>0$ such that 
\begin{equation}\label{exp-atr2}
\sup\left\{
\inf_{z\in\sK} \cR(S(t)y, z)\, :\; y\in \sB_0
\right\} \le C e^{-\gamma t} ~~~for ~all~~ t\ge 0.
\end{equation}
\end{corollary}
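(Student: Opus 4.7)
The plan is to deduce the partially strong exponential attraction directly from Theorem~\ref{th:ak}, by observing that the $H^1_0(\O) \times H^1_0(\O)$-convergence furnished there dominates each of the three contributions to the metric $\cR$ on the invariant absorbing set $\sB_0$. Since $\sK \subset \sB_0$ and $S(t)\sB_0 \subset \sB_0$, no further dissipativity or regularity estimate is needed; the argument is essentially a comparison of metrics.

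First I would fix $y \in \sB_0$ and, for each $t$ beyond the transient time supplied by Theorem~\ref{th:ak} (applied with $B = \sB_0$), select some $\tilde z = (\tilde z_0; \tilde z_1) \in \sK$ satisfying $\|u(t) - \tilde z_0\|_1 + \|u_t(t) - \tilde z_1\|_1 \le C_0 e^{-\ga t}$, where $S(t)y = (u(t); u_t(t))$. I would then bound the three pieces of $\cR(S(t)y, \tilde z)$ in turn: the term $\|u(t) - \tilde z_0\|_1$ is immediate; the term $\|u_t(t) - \tilde z_1\|$ is majorised by $\|u_t(t) - \tilde z_1\|_1$; and each summand in the series from (\ref{metric}) satisfies
\[
 \frac{|(u(t) - \tilde z_0, g_n)|}{1 + |(u(t) - \tilde z_0, g_n)|} \le |(u(t) - \tilde z_0, g_n)| \le \|u(t) - \tilde z_0\|_1 \, \|g_n\|_{-1} = \|u(t) - \tilde z_0\|_1,
\]
thanks to the normalisation $\|g_n\|_{-1} = 1$. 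Since $\sum_n 2^{-n} = 1$, the whole series is dominated by $\|u(t) - \tilde z_0\|_1$, and collecting the three estimates gives $\cR(S(t)y, \tilde z) \le C_1 e^{-\ga t}$ for $t$ past the transient, with $C_1$ independent of $y \in \sB_0$.

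To upgrade this to a bound uniform on $[0, +\infty)$, I would handle the initial interval by a crude diameter estimate: since $\sB_0$ is bounded in $\cH$ and $\sK \subset \sB_0$, the $H^1_0$- and $L_2$-norms of $S(t)y - \tilde z$ are uniformly bounded, and the series contribution is at most $\sum_n 2^{-n} = 1$, so $\cR(S(t)y, \tilde z)$ is bounded by a finite constant on the short interval; enlarging the prefactor then absorbs this into the exponential bound. Taking $\inf_{z \in \sK}$ and then $\sup_{y \in \sB_0}$ yields (\ref{exp-atr2}). I do not foresee a substantial obstacle: the only delicate point is verifying that the pseudo-metric produced by the infinite series is controlled by the $H^1_0$-norm of the first component on $\sB_0$, which is precisely what the normalisation of the $g_n$ in (\ref{metric}) guarantees.
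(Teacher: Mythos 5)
Your proof is correct and follows essentially the same route as the paper: both reduce the claim to the exponential $H^1_0(\O)\times H^1_0(\O)$-attraction of Theorem~\ref{th:ak} applied to $B=\sB_0$, and then dominate the metric $\cR$ on $\sB_0$ by that distance, the key point being exactly your bound $|(v,g_n)|\le \|v\|_1\|g_n\|_{-1}=\|v\|_1$ for the series together with a crude diameter bound on the transient interval. The only (cosmetic) difference is that the paper truncates the series at $N=[t]$ and estimates the tail by $2^{-N+1}$, whereas you bound the full series in one step using $\sum_n 2^{-n}=1$; your version is, if anything, slightly more direct.
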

\begin{proof}
As in the proof of Theorem~\ref{th:ak} using the splitting
given by (\ref{split1}) and (\ref{split2}) we have that
\begin{align*}
\inf_{z\in\sK} \cR(S(t)y, z) &\le \|v(t)\|_1
+\sum_{n=1}^\infty 
2^{-n}\frac{|(v(t), g_n)|}{1+|(v(t), g_n)|}
\le   \|v(t)\|_1+  2^{-N+1}+\sum_{n=1}^N 
2^{-n}\frac{|(v(t), g_n)|}{1+|(v(t), g_n)|}
\\
&\le  \|v(t)\|_1\left[ 1+\sum_{n=1}^N 
2^{-n} \|g_n\|_{-1}\right]+  2^{-N+1}\le  2 \|v(t)\|_1+  2^{-N+1}
\end{align*}
for every $N\in\N$,
where $S(t)y=(u(t);u_t(t))$ with $y=(u_0;u_1)\in\sB_0$,
and $v$ solves  (\ref{split2}). 
We can choose $N= [t]$, where $[t]$ denotes integer part of $t$. 
Thus (\ref{exp-atr2}) follows from
 (\ref{split5}). 
\end{proof}   
\subsection{Global attractor in partially strong  topology}
We recall 
the  notion of  a global attractor
and some dynamical characteristics  for the  
semigroup $S(t)$ which depend  on a choice of the topology in
 the phase space 
(see, e.g., \cite{BabinVishik, Chueshov,Hale,Temam} for the general theory).
\par 
A bounded  set $\Ac\subset \cH$   is said to be 
a {\em global partially strong attractor} for $S(t)$ if 
(i) $\Ac$ is closed with respect to the partially strong 
(see Definition~\ref{de:ss-top}) topology,
(ii) $\Ac$ is strictly invariant ($S(t)\Ac=\Ac$ for all $t>0$),
and (iii) $\Ac$  uniformly  attracts in the partially strong topology
all other bounded  sets: for any (partially strong) 
vicinity $\cO$ of $\Ac$ and for any bounded set 
$B$ in $\cH$ there exists $t_*=t_*(\cO,B)$ 
such that $S(t) B\subset \cO$ for all $t\ge t_*$.
\par 
{\em Fractal dimension} $\dim^X_f M$ of a compact set $M$ in a
 complete
metric space $X$ is defined as
\[
\dim^X_fM=\limsup_{\eps\to 0}\, \frac{\ln N(M,\eps)}{\ln (1/\eps)}\;,
\]
where $N(M,\eps)$ is the minimal number of closed sets in $X$ of
diameter $2\eps$ which cover~$M$.
\par
We also  recall (see, e.g., \cite{BabinVishik})
 that the \textit{unstable set} $\mathbb{M}_+(\cN)$ emanating from 
some set $\cN\subset\cH$ is a subset
 of $\cH$ such that for each $z\in\mathbb{M}_+(\cN)$
there exists a full trajectory $\{y(t): t\in\R\}$ satisfying
$u(0) = z$ and ${\rm dist}_\cH(y(t),\cN) \to  0$ as $t\to -\8$.
\par 
Our first main result in this section is the following theorem.
\begin{theorem}\label{th1:attractor}
Let Assumptions~\ref{A1} and \ref{A:dis} be in force.
Assume also that (i) $\phi(s)$ is strictly positive
(i.e., $\phi(s)>0$  for all $s\in\R_+$) 
and (ii) $f'(s)\ge -c$ for all $s\in\R$ in 
the non-super critical case (when  (\ref{f-supercrit}) does not
hold). 
Then the semigroup $S(t)$ given by (\ref{evol-sgr}) possesses
a  global partially strong attractor $\mathfrak{A}$ in the space 
$\cH$. Moreover,  $\mathfrak{A}\subset \cH_1= 
[H^2\cap H^1_0](\O)\times H^{1}_0(\O)$ and 
\begin{equation}\label{reg-new}
\sup_{t\in\R}\left( \|\D u(t)\|^2 
+\|\g u_t(t)\|^2+ \|u_{tt}(t)\|_{-1}^2+
\int_t^{t+1}\|u_{tt}(\t)\|^2d \t \right)\le C_{\Ac}
\end{equation} 
for
any  full trajectory $\gamma=\{ (u(t); u_t(t)): t\in \R\}$ 
from the attractor
$\Ac$.
We also have that
\begin{equation}\label{str-reg}
\Ac=\mathbb{M}_+(\cN),~~~ where ~~
 \cN=\{ (u;0)\in\cH : \phi(\|\cA^{1/2}u\|^2)\cA u +f(u)=h\}.
\end{equation}
\end{theorem}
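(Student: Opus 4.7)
The plan is to assemble the theorem from three building blocks already in place: continuity of $S(t)$ in the partially strong topology (Proposition~\ref{pr:gener}(A)), dissipativity yielding the positively invariant absorbing set $\sB_0$ (Proposition~\ref{pr:dis} and Remark~\ref{re:abs-inv}), and asymptotic compactness on $\sB_0$ with respect to the partially strong metric $\cR$ defined in (\ref{metric}) (Corollary~\ref{co:ak}). Since $\sB_0$ is bounded in $\cH\cap(H^1_0\times H^1_0)(\O)$, restriction to $\sB_0$ turns the partially strong topology into a complete metric topology via $\cR$. Combining these three ingredients in the standard abstract scheme (see, e.g., \cite[Chap.~I]{Temam} or \cite[Chap.~7]{cl-book}) produces the global partially strong attractor
\[
\Ac=\bigcap_{t\ge 0}\overline{\bigcup_{s\ge t}S(s)\sB_0}^{ps},
\]
which is strictly invariant, closed in the partially strong topology, and uniformly attracts bounded sets of $\cH$ in that topology.

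For the regularity in (\ref{reg-new}) I would exploit that $\Ac=S(t)\Ac$ for every $t\ge 0$, together with Theorem~\ref{th:ak}: every $y\in\Ac$ is of the form $S(n)y_n$ with $y_n\in\Ac\subset\sB_0$, hence ${\rm dist}_{H^1_0\times H^1_0}(y,\sK)\le Ce^{-\gamma n}\to 0$. Since $\sK$ is bounded in $\cH_1=(H^2\cap H^1_0)(\O)\times H^1_0(\O)$ and bounded balls of $\cH_1$ are closed in $H^1_0\times H^1_0$ (by weak lower semicontinuity of the $\cH_1$-norm), this forces $\Ac$ to be bounded in $\cH_1$. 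The bounds on $u_{tt}$ in (\ref{reg-new}) then follow from (\ref{smoth-prop}) applied along a full trajectory on $\Ac$: by invariance we can shift time, so replacing $0$ by $t-1$ and evaluating at $t$ makes the factor $1/t^\beta$ harmless and produces a uniform bound.

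For the characterization (\ref{str-reg}) I would use that $S(t)$ is a gradient system with strict Lyapunov function $\cE$ (Remark~\ref{re:gradient}). The set of equilibria $\cN$ is bounded in $\cH$: for $u\in\cN$, pairing $\phi(\|\nabla u\|^2)\cA u+f(u)=h$ with $u$ gives
\[
\phi(\|\g u\|^2)\|\g u\|^2+(f(u),u)=(h,u),
\]
and under Assumption~\ref{A:dis} (either $\phi(s)s\to+\infty$ with $\mu_f>0$, or $\hat\mu_\phi\la_1+\mu_f>0$) a standard coercivity argument yields an a priori bound on $\|\g u\|$, hence on $\|u\|_\cH$. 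The abstract characterization theorem for gradient systems possessing a compact global attractor (see, e.g., \cite[Sec.~I.4]{BabinVishik} or \cite[Chap.~7]{cl-book}) then gives $\Ac=\mathbb{M}_+(\cN)$.

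The most delicate point is to carry through the abstract attractor construction with the partially strong topology in the supercritical case, since the semigroup is \emph{not} known to be strongly continuous on $L_{p+1}$-bounded balls. Proposition~\ref{pr:gener}(A) is tailor-made to handle exactly this, and Corollary~\ref{co:ak} provides the compact attracting set in the metric $\cR$; once these are in hand, the rest (invariance upgrade to $\cH_1$, time-shift in the smoothing estimate, and the gradient characterization) is an application of general machinery and should proceed without additional difficulties.
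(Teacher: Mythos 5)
Your proposal is correct and follows essentially the same route as the paper: restrict $S(t)$ to the absorbing set $\sB_0$ metrized by $\cR$, invoke the asymptotic compactness of Corollary~\ref{co:ak} (built on Theorem~\ref{th:ak}) to get the compact global attractor in the partially strong topology, deduce the $\cH_1$-regularity \eqref{reg-new} from the attracting set $\sK$ together with the smoothing estimate \eqref{smoth-prop} shifted along full trajectories, and obtain \eqref{str-reg} from the gradient structure of Remark~\ref{re:gradient} via the standard theory. You merely spell out details (closedness of $\cH_1$-balls in $H^1_0\times H^1_0$, boundedness of $\cN$) that the paper leaves implicit.
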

\begin{proof}
Since  $\sB_0$ is an absorbing positively invariant set (see Remark~\ref{re:abs-inv}), to prove the theorem it is sufficient
to consider the restriction of $S(t)$ on the metric space 
$\sB_0$ endowed with the metric $\cR$ given by (\ref{metric}).
By Corollary~\ref{co:ak} the dynamical system 
$(\sB_0,S(t))$ is asymptotically compact.
 Thus (see, e.g., \cite{BabinVishik,Chu98,Temam}) 
this system possesses a compact (\wrt the metric $\cR$) 
global attractor $\Ac$ which belongs to $\sK$. 
It is clear that $\Ac$ is a global partially strong attractor
for $(\cH,S(t))$ with the regularity properties stated in 
(\ref{reg-new}). 
\par
The attractor $\Ac$ is a strictly invariant compact set
in $\cH$.
By  Remark~\ref{re:gradient}  the semigroup $S(t)$ is
 gradient on  $\Ac$. Therefore 
the standard results on gradient systems with compact attractors
 (see, e.g., \cite{BabinVishik,Chueshov,Temam}) yields (\ref{str-reg}).
Thus the proof of Theorem~\ref{th1:attractor} is complete.
\end{proof}
To obtain the result on dimension for the attractor $\Ac$ we need
 the following 
amplification  of the requirements listed in the first 
part of Assumption~\ref{A1}. 
\begin{assumption}\label{A-super} 
{\rm
The functions  $\s$ and   $\phi$  belong to  $C^1(\R_+)$
and possess the properties:
\begin{itemize}
\item
[{\bf (i)}] $\s(s)>0$ and $\phi(s)>0$ for all $s\in\R_+$;
 \item
[{\bf (ii)}] 
 $\la_1\hat\mu_\phi+\mu_f>0$, where  
$\hat\mu_\phi$ is defined in \eqref{44a}, $\mu_f$
is given by (\ref{f-coercive}) and
 $\la_1$ is the first eigenvalue of  the minus Laplace operator
 in $\O$ with the Dirichlet boundary conditions 
(in the supercritical case this requirement holds automatically).  
\end{itemize} 
}
\end{assumption}

\begin{theorem}\label{th:dim}
Let Assumptions~\ref{A1}(ii), \ref{A:dis} and 
\ref{A-super}  be in force and $\inf_{s\in\R}f'(s)> -\infty$ in the non-supercritical case. 
Then the   global partially 
strong attractor $\mathfrak{A}$ 
given by Theorem~\ref{th1:attractor}
 has a finite fractal
dimension as a compact set in $\cH_r :=
[H^{1+r}\cap H^1_0](\O)\times H^{r}(\O)$ 
for every $r<1$.
\end{theorem}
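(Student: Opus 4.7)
The plan is to verify the quasi-stability condition of \cite[Chapter~7]{cl-book} for $S(t)$ restricted to $\Ac$, to deduce the finiteness of the fractal dimension in an \emph{extended phase space}, and then to upgrade this to the whole scale $\cH_r$, $r<1$, by an interpolation argument exploiting the $\cH_1$-regularity \eqref{reg-new} of the attractor. The extended-space idea originates from \cite{KaZe09} and is tailored to cope with the supercritical source nonlinearity.

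I would work in $\widetilde\cH=H_0^1(\O)\times H^{-1}(\O)$. Since $\phi$ is continuous with $\phi>0$ and $\sup_\Ac\|\g u\|<\infty$ by Theorem~\ref{th1:attractor}, both $\s(\|\g u\|^2)$ and $\phi(\|\g u\|^2)$ are uniformly positive on $\Ac$. Revisiting the derivation of \eqref{dif-bnd}, I would restart from Lemma~\ref{le:z-mult} and combine the multipliers $z$ and $\cA^{-1}z_t$ into a Lyapunov functional analogous to \eqref{psi-t-w}, this time retaining the positive contributions $\frac12\phi_{12}(t)\|\g z\|^2$ in \eqref{array-eq1} and the coercive term $c_1\int_\O(|u^1|^{p-1}+|u^2|^{p-1})|z|^2\,dx$ from Lemma~\ref{le:f-coerc}. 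Using the uniform positivity of $\s,\phi$ on $\Ac$ and the integrability $\int_0^\infty\|\g u_t^i\|^2\,dt<\infty$ available from \eqref{reg0-new}, a standard Lyapunov/Gronwall argument should produce a functional $\Psi(t)$ equivalent to $\|z_t\|_{-1}^2+\|\g z\|^2$ satisfying
\begin{equation*}
\Psi(t) \le C\,e^{-\gamma t}\Psi(0) + C\sup_{s\in[0,t]}\|z(s)\|^2,\qquad t\ge 0,
\end{equation*}
uniformly for $(u^i(0);u_t^i(0))\in\Ac$. Since the embedding $H_0^1(\O)\hookrightarrow L^2(\O)$ is compact, the seminorm $\|z\|_{L^2(\O)}$ is compact relative to $\widetilde\cH$, so this is precisely the quasi-stability estimate required, and the abstract dimension theorem of \cite[Chapter~7]{cl-book} then yields $\dim_f^{\widetilde\cH}\Ac<\infty$.

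To pass from $\widetilde\cH$ to $\cH_r$ I would invoke interpolation. The set $\Ac$ is bounded in $\cH_1=(H^2\cap H_0^1)\times H_0^1$ by \eqref{reg-new}, and for every $r\in(0,1)$ the space $\cH_r$ lies between $\widetilde\cH$ and $\cH_1$ in the sense of real interpolation. The Lions--Peetre inequality
\begin{equation*}
\|y\|_{\cH_r}\le C\,\|y\|_{\widetilde\cH}^{1-\th}\|y\|_{\cH_1}^{\th},\qquad \th=\th(r)\in (0,1),
\end{equation*}
applied componentwise, combined with the $\cH_1$-boundedness of $\Ac$, converts any $\eps$-covering of $\Ac$ in $\widetilde\cH$ into a $(C\eps^{1-\th})$-covering in $\cH_r$. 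Hence $\dim_f^{\cH_r}\Ac\le(1-\th)^{-1}\dim_f^{\widetilde\cH}\Ac<\infty$.

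The main obstacle is the derivation of the quasi-stability estimate in the supercritical regime. The delicate step is the treatment of $G_3(t)=(f(u^1)-f(u^2),\cA^{-1}z_t)$: in the extended norm the $\|\g z_t\|^2$-dissipation term is not available, so one must use \eqref{f-a-1} together with the coercive contribution of Lemma~\ref{le:f-coerc} to absorb the $\eps$-residue, while \eqref{zt-Lp+1} handles $\|\cA^{-1}z_t\|_{L_{p+1}}^2$ at the cost of a small $\|z_t\|^2$ term that is in turn absorbable via the dissipation $\int_0^t\|z_t\|^2\,d\tau$ already built into \eqref{dif-bnd}. Calibrating these small parameters in the right order is the only genuinely subtle point; the remaining bookkeeping parallels the uniqueness argument in Theorem~\ref{pr:wp1}.
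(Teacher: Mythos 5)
Your overall architecture (first prove finite dimension in the weaker space $\widetilde\cH=H^1_0(\O)\times H^{-1}(\O)$, then transfer to $\cH_r$ using the $\cH_1$-boundedness \eqref{reg-new} of $\Ac$) is the same as the paper's, and your final interpolation step is correct — in fact slightly more direct than the paper's, which passes through H\"older continuity of $S(t)\big|_\Ac$ from $\widetilde\cH$ into $\cH_r$ rather than interpolating the identity on $\Ac$. The gap is in the core step: the quasi-stability inequality you assert,
\[
\Psi(t)\le C e^{-\gamma t}\Psi(0)+C\sup_{s\in[0,t]}\|z(s)\|^2 ,
\]
with a compact seminorm acting on the \emph{displacement component only}, is not justified in the supercritical case and is not what the multipliers $z$ and $\cA^{-1}z_t$ actually yield. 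In the treatment of $G_2$ and $G_3$ (cf. \eqref{f-a-1} and \eqref{zt-Lp+1}) the quantity $\|\cA^{-1/2}z_t\|^2$ appears with a constant $C_\eps(R)$ that cannot be made small; the only way to handle it is the splitting $\|\cA^{-1/2}z_t\|^2\le \eta\|z_t\|^2+C_\eta\|\cA^{-l}z_t\|^2$, absorbing $\eta\|z_t\|^2$ into the dissipation $\s_{12}(t)\|z_t\|^2$ (not, as you suggest, into the a priori bound \eqref{dif-bnd}, which has the wrong role in a differential inequality). The compact remainder $\|\cA^{-l}z_t\|^2$ in the \emph{velocity} component therefore survives, and this is exactly the form of the paper's weak quasi-stability estimate \eqref{w-stab}. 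You give no mechanism for removing it, and none is apparent: $\int_0^t e^{-\gamma(t-\tau)}\|\cA^{-l}z_t(\tau)\|^2d\tau$ is not controlled by $\sup_{[0,t]}\|z\|^2$.

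Once the velocity term is present, the quasi-stability theorem you invoke (seminorm on the first component only, Definition 7.9.2 and its dimension consequence in \cite{cl-book}) no longer applies directly, because the compact terms are time-integrated quantities involving both $z$ and $z_t$ along the trajectory. This is precisely why the paper switches to the M\'alek--Ne\v{c}as/Pra\v{z}ak ``short trajectory'' device: it works in the trajectory space $W_T$, where $n_T^2(U)=\int_0^T\bigl[\|u\|^2+\|\cA^{-l}u_t\|^2\bigr]dt$ \emph{is} a compact seminorm — the compactness requiring the additional estimate on $z_{tt}$ of Lemma~\ref{le:w-q-st}, which your proposal never produces — and then applies the abstract dimension result (Theorem 2.15 of \cite{ChuLas}) to the shift map $V$ on $\Ac_T$, obtaining ${\rm dim}_f^{\widetilde\cH}\Ac<\infty$ before the interpolation step. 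So either you must supply a genuinely new derivation of a displacement-only quasi-stability estimate in the supercritical regime (which the paper's analysis suggests is out of reach with these multipliers), or you need to add the trajectory-space layer (including the $z_{tt}$ bound) that your proposal currently omits.
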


Our main ingredient of the proof is the following 
weak quasi-stability estimate.

\begin{proposition}[Weak quasi-stability]\label{pr:w-q-st}
Assume that the hypotheses of Theorem~\ref{th:dim}
are in force. 
Let $u^1(t)$ and $u^2(t)$ be two weak solutions
such that $\|u^i(t)\|_2^2+ \|u^i_t(t))\|^2_1\le R^2$, 
for all $t\ge 0$,
$i=1,2$. Then
their difference $z(t)=u^1(t)-u^2(t)$ satisfies the relation
\begin{align}
\label{w-stab}
\|z_t(t)\|^2_{-1}+ \|\g z(t)\|^2  \leq &\; a_R\left(
 \|z_t(0)\|^2_{-1}+ \|\g z(0)\|^2\right) e^{-\ga_Rt}
\\ &
\; +b_R\int_0^t e^{-\ga_R(t-\t)}\left[ \| z(\t)\|^2+ 
\|\cA^{-l} z_t(\t)\|^2 
\right]d\t,\nonumber 
\end{align}
where $a_R, b_R,\ga_R$ are positive constants and $l\ge 1/2$ 
can be taken arbitrary.
\end{proposition}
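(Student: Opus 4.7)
The goal is to establish an exponentially decaying bound in the weak norm $\|z_t\|^2_{-1} + \|\g z\|^2$ with a compact lower-order perturbation --- the classical shape of a quasi-stability estimate. The strategy is a two-multiplier Lyapunov construction, tuned to the extra information now available: under the hypotheses of Theorem~\ref{th:dim} the orbits $u^i$ are bounded in $[H^2\cap H^1_0](\O)\times H^1_0(\O)$ \emph{pointwise} in $t$ (not merely in the dissipation integral), and the coefficients $\s(\|\g u^i\|^2)$, $\phi(\|\g u^i\|^2)$ are bounded from below by positive constants depending only on $R$.

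First I would invoke Lemma~\ref{le:z-mult}. Since $\|\g u^i_t(t)\|\le R$ pointwise, and the term $\widetilde\phi_{12}|(\g(u^1+u^2),\g z)|^2$ can be absorbed into $C_R\|\g z\|^2$, it yields
\[
\frac{d}{dt}\Bigl[(z,z_t) + \tfrac14\s_{12}(t)\|\g z\|^2\Bigr] + \tfrac12\phi_{12}(t)\|\g z\|^2 + (f(u^1)-f(u^2),z) \le \|z_t\|^2 + C_R\|\g z\|^2,
\]
and the source pairing is bounded from below by $-c\|z\|^2$ by Lemma~\ref{le:f-coerc} in the supercritical case and directly by $f'(s)\ge -c$ in the non-supercritical one. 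Second, I would test the difference equation (\ref{abs-dif}) against $\cA^{-2l}z_t$, mimicking the multiplier computation (\ref{ar:mult1})--(\ref{G-zt}), to obtain
\[
\tfrac12 \frac{d}{dt}\|\cA^{-l}z_t\|^2 + \tfrac12\s_{12}(t)\|\cA^{1/2-l}z_t\|^2 + \tfrac12\phi_{12}(t)(\cA^{-2l}z_t,z) + (G(u^1,u^2;t),\cA^{-2l}z_t) = 0.
\]
The $G_1$, $G_2$ contributions are bounded by $C_R\|\g z\|\cdot\|\cA^{-l}z_t\|$ using the pointwise bound on $\|\g u^i_t\|$, while the $f$-contribution is treated as in (\ref{f-a-1}): via Sobolev's embedding $H^{2-\eta}(\O)\subset L^{p+1}(\O)$ (valid for $p<p_{**}$) and interpolation one gets $\|\cA^{-2l}z_t\|_{L^{p+1}}^2 \le \eps\|z_t\|^2 + C_\eps\|\cA^{-l}z_t\|^2$, while the uniform $H^2$-bound on $u^i$ controls $\int(1+|u^i|^{p-1})|z|^2$ by $\eps\|\g z\|^2 + C_{R,\eps}\|z\|^2$.

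With both differential inequalities in hand I would form the Lyapunov functional
\[
V(t) = \tfrac12\|\cA^{-l}z_t\|^2 + \eta\Bigl[(z,z_t) + \tfrac14\s_{12}(t)\|\g z\|^2\Bigr] + \kappa\|\g z\|^2,
\]
choose $\eta$ small and then $\kappa=\kappa(R)$ large, and verify the two-sided equivalence
\[
\alpha_R\bigl(\|\cA^{-l}z_t\|^2 + \|\g z\|^2\bigr) \le V(t) \le \beta_R\bigl(\|\cA^{-l}z_t\|^2 + \|\g z\|^2\bigr).
\]
Adding the two identities with appropriate weights and using the strict positivity $\s_{12}(t),\phi_{12}(t)\ge c_R>0$ to dominate the remaining $\|z_t\|^2$ and cross terms, one arrives at
\[
\frac{d}{dt}V(t) + \gamma_R V(t) \le b_R\bigl(\|z(t)\|^2 + \|\cA^{-l}z_t(t)\|^2\bigr),
\]
and an application of Gronwall's lemma yields (\ref{w-stab}).

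The delicate step, and the one I expect to be the main obstacle, is the nonlinear pairing $(f(u^1)-f(u^2),\cA^{-2l}z_t)$ in the supercritical regime $p_*<p<p_{**}$: the constraint $p<p_{**}$ is exactly what makes $H^{2-\eta}(\O)\subset L^{p+1}(\O)$ and the interpolation estimate above available, and it is also what allows the uniform $H^2$-bound on $u^i$ (inherited from the attractor regularity of Theorems~\ref{th:ak} and \ref{th1:attractor}) to convert $\int|u^i|^{p-1}|z|^2$ into $\eps\|\g z\|^2 + C_{R,\eps}\|z\|^2$. Without this higher regularity of the orbit the $\cA^{-2l}z_t$ multiplier would fail precisely as warned in Remark~\ref{re-mult}, so it is essential to work on the attractor level where such bounds are available.
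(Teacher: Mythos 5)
The central deviation in your argument --- replacing the paper's multiplier $\cA^{-1}z_t$ by $\cA^{-2l}z_t$ --- is exactly what breaks the proof for $l>1/2$, and it is the pitfall the paper itself flags in Remark~\ref{re-mult}. With $\cA^{-2l}z_t$ the only velocity dissipation you generate is $\s_{12}(t)\|\cA^{1/2-l}z_t\|^2$, which does not dominate the term $\|z_t\|^2$ produced by Lemma~\ref{le:z-mult} (nor the $\eps\|z_t\|^2$ created by your own interpolation of the $f$-pairing), so these terms have nothing to absorb them; moreover the claimed two-sided equivalence of $V(t)$ with $\|\cA^{-l}z_t\|^2+\|\g z\|^2$ fails, because the cross term $(z,z_t)$ involves $\|z_t\|$, which is not controlled by $\|\cA^{-l}z_t\|$ when $l>1/2$, and adding $\kappa\|\g z\|^2$ does not repair this (differentiating it produces the further uncontrolled term $(\g z,\g z_t)$). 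The paper keeps the multiplier $\cA^{-1}z_t$, so the damping is $\hf\s_{12}(t)\|z_t\|^2$ and the functional is exactly $\Psi$ of (\ref{psi-t-w}); the arbitrary $l\ge 1/2$ enters only through the elementary bound $\|\cA^{-1/2}z_t\|^2\le \eta\|z_t\|^2+C_\eta\|\cA^{-l}z_t\|^2$ applied to the lower-order terms on the right-hand side. As written, your scheme is sound only at $l=1/2$, where it coincides with the paper's.

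Two further gaps. First, you cannot reach a constant decay rate $\ga_R$ by absorbing $C_R\left(\|\g u^1_t\|+\|\g u^2_t\|\right)\|\g z\|^2$ pointwise: the pointwise bound only gives a coefficient of order $C_R R$, which need not be dominated by $\hf\phi_{12}(t)$, and the choice of $\eta$ cannot help since both terms enter with the same weight. The paper instead keeps the variable rate $\a_{12}(t)=\frac{\eta}{2}\phi_{12}(t)-c_R d_{12}(t)$, with $d_{12}(t)=\|\g u^1_t\|^2+\|\g u^2_t\|^2$, and uses the finiteness of the dissipation integral (\ref{R-bnd}) (available by Assumption~\ref{A-super} via Remark~\ref{re:e+}) to get $\int_\t^t\a_{12}(\xi)\,d\xi\ge \eta\phi_R (t-\t)-C_R$; this integrated Gronwall step is the actual mechanism behind (\ref{w-stab}) and is absent from your argument. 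Second, in the supercritical case you discard the coercive term $c_1\int_\O(|u^1|^{p-1}+|u^2|^{p-1})|z|^2dx$ provided by Lemma~\ref{le:f-coerc} and then claim $\int_\O(1+|u^i|^{p-1})|z|^2dx\le\eps\|\g z\|^2+C_{R,\eps}\|z\|^2$ from the $H^2$-bound on $u^i$; by H\"older and Sobolev this requires $H^2(\O)\subset L_{(p-1)d/2}(\O)$, i.e. $p\le d/(d-4)$ for $d\ge 5$, and so fails in part of the admissible range $p<p_{**}=(d+4)/(d-4)_+$. The paper keeps that coercive term on the left precisely so that the $\eps$-portion of the estimate (\ref{f-a-1}) can be absorbed into it rather than into $\|\g z\|^2$.
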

\begin{proof}
Our additional hypothesis on  $\phi$ and also
the bounds for solutions $u^i$ imposed allow us to improve
the argument which led to (\ref{dif-bnd}). 
\par
Since 
\begin{equation}\label{phi-tild1}
|\widetilde\phi_{12}(t)| |(\g (u^1+u^2),\g z)|^2
\le C_R\|z\|^2,~~~ t\ge 0,
\end{equation}
for our case, it follows from Lemmas~\ref{le:z-mult} and 
\ref{le:f-coerc} that 
\begin{align}\label{key-for-qst}
\frac{d}{dt}\left[ (z,z_t)+\frac14 \s_{12}(t)
\cdot \|\g z\|^2\right]+ \frac12 \phi_{12}(t)\cdot \|\g z\|^2  +\,
c_0\left[
\int_\O(|u^1|^{p-1}+|u^2|^{p-1}) |z|^2 dx \right]  \\
\le  \; \|z_t\|^2+  C_R\left(\|\g u^1_t\|+|\g u^2_t\|\right) \|\g z\|^2 +
C\|z\|^2, \nonumber
\end{align}
where $c_0=0$ in the non-supercritical case.
Now
as in the proof of Theorem~\ref{pr:wp1} we use the multiplier 
$\cA^{-1}z_t$. However now our  considerations 
of the term $|(G(u^1,u^2; t),\cA^{-1} z_t)|$
of the form (\ref{g-zt-min})
involves the additional positivity type requirement imposed on $\phi$. 
\par
Using the inequality 
$\|\cA^{-1/2} z_t\|^2\le \eta \| z_t\|^2 + C_\eta\|\cA^{-l} z_t\|^2$
for any $\eta>0$ and $l\ge 1/2$,
one can  see that 
\begin{eqnarray*}
|G_1(t)|\le \eps \|z_t\|^2+  C_{R,\eps}
\left(\|\g u^1_t\|^2+\|\g u^2_t\|^2\right) \|\g z\|^2
\end{eqnarray*}
and also, involving (\ref{phi-tild1}),
\begin{eqnarray*}
|G_2(t)|\le \eps \|z_t\|^2+
C_{R,\eps} \left[\|\cA^{-l} z_t\|^2 +\|z\|^2\right] 
\end{eqnarray*}
for any $\eps>0$ and for every $l\ge 1/2$.
Therefore from (\ref{f-a-1}) we obtain that  
\begin{eqnarray*}
|(G(u^1,u^2; t),\cA^{-1} z_t)| &
\le & C_{R,\eps}\left[\left(\|\g u^1_t\|^2+\|\g u^2_t\|^2\right) \|\g z\|^2
+ \|z\|^2+  \|\cA^{-l} z_t\|^2\right]  \\
 & & +\,
 \eps  \left[ \|z_t\|^2+
c_0\int_\O(|u^1|^{p-1}+|u^2|^{p-1}) |z|^2 dx \right] 
\end{eqnarray*}
for any $\eps>0$,
where $c_0=0$ in the non-supercritical case.
 Consequently by (\ref{ar:mult1}) and (\ref{key-for-qst}) the function $\Psi(t)$ given by
(\ref{psi-t-w}) satisfies the relation
\begin{eqnarray*}
\lefteqn{
 \frac{d\Psi}{dt}
+ \frac{\eta}2 \phi_{12}(t)\cdot \|\g z\|^2 
  +\left[\hf \s_{12}(t)-\eta-\eps\right]\| z_t\|^2
} \\ &&
+\, c_0(\eta -\eps) 
\int_\O(|u^1|^{p-1}+|u^2|^{p-1}) |z|^2 dx 
  \le C_\eps(R)\left[
d_{12}(t)
 \|\g z\|^2
  +
\|\cA^{-l} z_t\|^2+\|z\|^2\right], 
\end{eqnarray*}
where 
$d_{12}(t)=\|\g u^1_t(t)\|^2+\|\g u^2_t(t)\|^2$.
Therefore after an appropriate choice of $\eta$ and $\eps$ we have that
\[
\frac{d\Psi}{dt}+ \alpha_{12}(t) \Psi 
\le c_R \left[
\|\cA^{-l} z_t\|^2+\|z\|^2\right]~~~\mbox{with} ~~
\a_{12}(t)=\frac{\eta}2\phi_{12}(t)- c_R d_{12}(t),
\]
This implies  that
\begin{equation}\label{psi-est}
\Psi(t)\le c_R\exp\left\{ \! -\!\! \int_0^t\!\! \alpha_{12}(\t)d\t \right\} \Psi(0)+
c_R\int_0^t\exp\left\{\! - \!\! \int_\t^t\!\! \alpha_{12}(\xi)d\xi \right\}\left[
\|\cA^{-l} z_t(\t)\|^2+\|z(\t)\|^2\right]d\t. 
\end{equation}
Under Assumption~\ref{A-super} by Remark~\ref{re:e+}
we have    estimate  (\ref{R-bnd}) which yields
 that
\[
\int_\t^t \alpha_{12}(\xi)d\xi\ge \eta \phi_R\cdot (t-\t)-c_R
\int_\t^t d_{12}(\xi)d\xi\ge \eta \phi_R\cdot (t-\t)- C_{R}~~
\]
for all $t>\t\ge 0$. with positive $\phi_R$ and $C_R$. Thus
from (\ref{psi-est}) and (\ref{eq-nrms}) we obtain (\ref{w-stab}).
\end{proof}
\begin{lemma}\label{le:w-q-st}
Let the hypotheses of Proposition~\ref{pr:w-q-st} be in force. Then
the difference $z(t)=u^1(t)-u^2(t)$ of two  weak solutions satisfies the relation
\begin{equation}
\label{ztt-eeq}
\int_0^T\|\cA^{-l}z_{tt}(\t)\|^2  d\t   \leq  C_R\left(
 \|z_t(0)\|^2_{-1}+ \|\g z(0)\|^2\right) 
 +C_R T \int_0^T\left[ \| z(\t)\|^2+ 
\|\cA^{-l} z_t(\t)\|^2 
\right]d\t
\end{equation}
for every $T\ge1$,
where $C_R>0$ is a constant and $l\ge 3/2$ is  arbitrary such that 
$L_1(\O)\subset H^{-2l}(\O)$, i.e. $l>d/4$.
\end{lemma}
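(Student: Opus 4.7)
The plan is to apply $\cA^{-l}$ to the equation for $z$ derived in Lemma~\ref{le:z-mult}, i.e.\ equation (\ref{abs-dif}), which rewrites, using $-\Delta=\cA$ on $H^1_0(\O)$, as
\begin{equation*}
\cA^{-l}z_{tt}=-\tfrac12\s_{12}(\t)\,\cA^{1-l}z_t-\tfrac12\phi_{12}(\t)\,\cA^{1-l}z-\cA^{-l}G(u^1,u^2;\t).
\end{equation*}
I then estimate each term pointwise in $L_2(\O)$, square and integrate on $[0,T]$, and close the estimate by invoking the weak quasi-stability bound from Proposition~\ref{pr:w-q-st}.

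For the first two summands, the resolvent bounds $\|\cA^{-a}v\|\le \la_1^{-a+1/2}\|\cA^{-1/2}v\|$ (valid for $a\ge 1/2$) together with $l\ge 3/2$ will yield $\|\cA^{1-l}z_t\|\le C\|z_t\|_{-1}$ and $\|\cA^{1-l}z\|\le C\|z\|$, while $|\s_{12}|,|\phi_{12}|\le C_R$ since $\|\g u^i\|\le R$. Splitting $G=G_1+G_2+G_3$ as in Lemma~\ref{le:z-mult}, the $C^1$ regularity of $\s,\phi$ gives $|\s_1(\t)-\s_2(\t)|+|\phi_1(\t)-\phi_2(\t)|\le C_R\|\g z(\t)\|$, which combined with the trivial $L_2$-bounds $\|\cA^{1-l}(u^i+u^j)\|$, $\|\cA^{1-l}(u^i_t+u^j_t)\|\le C_R$ produces $\|\cA^{-l}(G_1+G_2)\|\le C_R\|\g z\|$. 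The supercritical term $G_3=f(u^1)-f(u^2)$ is treated by means of the assumption $l>d/4$: the Sobolev embedding $H^{2l}(\O)\subset L_\infty(\O)$ gives, by duality, $\|\cA^{-l}v\|\le C\|v\|_{L_1(\O)}$, and the growth bound in Assumption~\ref{A1} combined with H\"older's inequality yields $\|f(u^1)-f(u^2)\|_{L_1(\O)}\le C_R\|z\|_{L_{(p+1)/2}(\O)}$, where we use the uniform bound $\|u^i\|_{L_{p+1}(\O)}\le C_R$ coming from $u^i\in H^2$ and $p<p_{**}$.

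Integrating the resulting pointwise inequality on $[0,T]$ splits the right-hand side into two parts. The contribution of $\|z_t(\t)\|_{-1}^2+\|\g z(\t)\|^2$ is bounded by Proposition~\ref{pr:w-q-st} after integration in $\t$: the factor $e^{-\ga_R\t}$ makes the initial-data piece contribute only $C_R(\|z_t(0)\|_{-1}^2+\|\g z(0)\|^2)$, and Fubini's theorem applied to the convolution produces $C_R\int_0^T[\|z\|^2+\|\cA^{-l}z_t\|^2]\,d\t$ without any $T$ factor. For $\int_0^T\|z\|^2_{L_{(p+1)/2}(\O)}\,d\t$, interpolation between $L_2(\O)$ and the uniform $L_{p+1}(\O)$-bound gives $\|z\|_{L_{(p+1)/2}}^2\le C_R\|z\|^{4/(p-1)}$ (for $p>3$; the case $p\le 3$ is even simpler via a direct Sobolev/H\"older estimate); then H\"older in $\t$ exploiting $T\ge 1$ together with Young's inequality brings out the multiplicative factor $C_R T$ in front of $\int_0^T[\|z\|^2+\cdots]\,d\t$ in (\ref{ztt-eeq}).

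The main obstacle is precisely this last step: in the supercritical regime the simpler estimate $\|f(u^1)-f(u^2)\|_{-1}\le C_R\|\g z\|$ available in the non-supercritical case fails, and the $L_1\hookrightarrow H^{-2l}$ duality (which is exactly why one must take $l>d/4$) is the device that compensates, at the cost of the factor $T$ appearing on the right-hand side of (\ref{ztt-eeq}).
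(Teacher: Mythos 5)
Your handling of the linear terms, of $G_1+G_2$, and your use of Proposition~\ref{pr:w-q-st} combined with Fubini are fine; the genuine gap is in the supercritical estimate of $G_3=f(u^1)-f(u^2)$. After the correct duality bound $\|\cA^{-l}(f(u^1)-f(u^2))\|\le C\|f(u^1)-f(u^2)\|_{L_1(\O)}\le C_R\|z\|_{L_{(p+1)/2}(\O)}$, you interpolate against the crude bound $\|z\|_{L_{p+1}(\O)}\le C_R$, obtaining $\|z\|^2_{L_{(p+1)/2}(\O)}\le C_R\|z\|^{4/(p-1)}$, which has homogeneity $4/(p-1)<2$ in $z$ whenever $p>3$ (and $p>3$ covers the principal supercritical range, e.g.\ $5<p<7$ for $d=3$). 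No combination of H\"older in $\t$ and Young's inequality can then recover the right-hand side of (\ref{ztt-eeq}): that right-hand side is quadratic in the difference (it vanishes when $u^1\equiv u^2$), while Young applied to $T^{1-a}\bigl(\int_0^T\|z\|^2d\t\bigr)^{a}$ with $a=2/(p-1)$ necessarily leaves an additive term of order $T$ not multiplied by any quantity that is quadratically small in $z$; testing with $z$ of size $\delta\to 0$ shows your intermediate bound is strictly weaker than (\ref{ztt-eeq}). This loss is not cosmetic, since the homogeneous (seminorm) form of (\ref{ztt-eeq}) is exactly what the quasi-stability inequality (\ref{qst-w-est}) and the dimension argument in Theorem~\ref{th:dim} require.

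The paper closes this step by keeping the weighted quantity instead of interpolating it away: Cauchy--Schwarz in $x$, together with $\int_\O(1+|u^1|^{p-1}+|u^2|^{p-1})dx\le C_R$, gives $\|\cA^{-l}(f(u^1)-f(u^2))\|^2\le C_R\bigl(\|z\|^2+\int_\O(|u^1|^{p-1}+|u^2|^{p-1})|z|^2dx\bigr)$, and the time integral of the weighted term is taken only over unit intervals $[a,b]$, $b-a\le 1$, where the Lipschitz-type estimates (\ref{dif-bnd}) and, crucially, the supercritical estimate (\ref{dif-bnd+}) yield the quadratic bound $C_R\bigl(\|z_t(a)\|^2_{-1}+\|\g z(a)\|^2\bigr)$ with a $T$-independent constant. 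Summing over $k=0,\dots,[T]$ and controlling $\|z_t(k)\|^2_{-1}+\|\g z(k)\|^2$ via the stabilizability estimate (\ref{w-stab}) is what legitimately produces the factor $C_R T$ in (\ref{ztt-eeq}). To repair your proof, replace the interpolation step by this use of (\ref{dif-bnd+}) on unit subintervals; the rest of your argument can be kept.
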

\begin{proof}
It follows from (\ref{abs-dif}) that
$\|\cA^{-l}z_{tt}\|\le C_R(  \|\cA^{-l+1}z\|+ \|\cA^{-l+1}z_{t}\|)
+\|\cA^{-l}G(u^1,u^2;t)\|$.
By the embedding $L_1(\O)\subset  H^{-2l}(\O)$ we obviously have that 
\begin{eqnarray*}
\|\cA^{-l}G(u^1,u^2;t)\| &\le & C_R  \|\cA^{1/2}z\|+
C \int_\O |f(u^1)-f(u^2)| dx  \\
& \le &  C_R  \|\cA^{1/2}z\|+
C \int_\O \left(1+ |u^1|^{p-1}+ |u^2|^{p-1}\right) |z| dx.  
\end{eqnarray*}
Therefore using (\ref{dif-bnd}) (and also (\ref{dif-bnd+})
 in the supercritical case) we obtain that 
\[
\int_a^b\|\cA^{-l}z_{tt}(\t)\|^2  d\t   \leq  C_R\left(
 \|z_t(a)\|^2_{-1}+ \|\g z(a)\|^2\right) 
\]
for every $a<b$ such that $b-a\le 1$.
Therefore 
\begin{eqnarray*}
\int_0^T\|\cA^{-l}z_{tt}(\t)\|^2  d\t  &  \leq &
 \sum_{k=0}^{[T]-1}\int_k^{k+1}\|\cA^{-l}z_{tt}(\t)\|^2  d\t +\int_{[T]}^T\|\cA^{-l}z_{tt}(\t)\|^2  d\t  \\ 
 & \le &  
 C_R
\sum_{k=0}^{[T]}\left(
 \|z_t(k)\|^2_{-1}+ \|\g z(k)\|^2\right), 
\end{eqnarray*}
where $[T]$ denotes the integer part of $T$.  Now we can apply 
the stabilizability estimate in (\ref{w-stab})
with $t=k$ for each $k$ and obtain  (\ref{ztt-eeq}).
\end{proof}
\subsubsection*{Proof of Theorem~\ref{th1:attractor}} 
We use the idea due to  
M\'alek--Ne\v{c}as~\cite{malek-ne} (see also  \cite{malek} and \cite{ChuLas}).
\par For some $T\ge 1$ which we specify latter and for some 
$l>\max\{d,6\}/4$ 
we consider  the space
\[
W_T=\left\{u\in  C(0,T;  H^{1}_0(\O))\, :\; u_t\in  C(0,T;  H^{-1}(\O)), \;
 u_{tt}\in  L_2(0,T;  H^{-2l}(\O))   \right\}
\]
with the norm
\[
|u|_{W_T}^2=\max_{t\in [0,T]}\left[ \|\g u(t)\|^2+ \|u_t(t)\|_{-1}^2\right] +
\int_0^T \|u_{tt}(t)\|_{-2l}^2 dt.
\]
Let
${\Ac}_T$ be  the set of weak solutions
to (\ref{main_eq}) on the interval $[0,T]$
with initial data  $(u(0);u_t(0))$ from the attractor  $\Ac$. It is clear that   
${\Ac}_T$ is a closed bounded set in $W_T$.
Indeed, if the sequence of solutions $u^n(t)$ with initial data in
${\Ac}_T$
is fundamental in $W_T$, then we have that $u^n(0)\to u_0$ strongly 
in $H^1(\O)$,  $u^n(0)\to u_0$  weakly in $L_{p+1}(\O)$
 and $u^n_t(0)\to u_1$ weakly in $L_{2}(\O)$ for some $(u_0;u_1)\in \Ac$.
By (\ref{dif-bnd}) and (\ref{ztt-eeq}) this implies that $u^n(t)$ converges 
in $W_T$ to the solution with initial data   $(u_0;u_1)$.
This yields  the closeness of ${\Ac}_T$ in $W_T$.
The boundedness of ${\Ac}_T$ is obvious.
\par 
On ${\Ac}_T$ we define the shift operator $V$
by the formula
\[
V\, :\; {\Ac}_T\mapsto {\Ac}_T, ~~~ [Vu](t)=u(T+t),~~  t\in [0,T].
\]
It is  clear that ${\Ac}_T$ is strictly invariant \wrt $V$, i.e.
$V{\Ac}_T={\Ac}_T$.
It follows from (\ref{dif-bnd}) and (\ref{ztt-eeq}) that 
\begin{equation*}%\label{w-est-0}
|V U_1-VU_2|_{W_T}\le C_T | U_1- U_2|_{W_T},~~~ U_1,U_2\in\widetilde{\sB}_T.
\end{equation*}
By Proposition~\ref{pr:w-q-st} we  have that
\begin{eqnarray*}
\max_{s\in [0,T]}\left\{
\|z_t(T+s)\|^2_{-1}+ \|\g z(T+s)\|^2\right\} & \leq & a
 e^{-\ga T}\max_{s\in [0,T]}
\left\{
 \|z_t(s)\|^2_{-1}+ \|\g z(s)\|^2\right\}
\\ & &
\; +\; b \int_0^{2T} \left[ \| z(\t)\|^2+ 
\|\cA^{-l} z_t(\t)\|^2 
\right]d\t,
\end{eqnarray*}
where $a,b,\ga>0$ depends on the size of the set $\Ac$ in $\cH_1= 
[H^2\cap H^1_0](\O)\times H^{1}_0(\O)$.
Lemma~\ref{le:w-q-st} and Proposition~\ref{pr:w-q-st}  also
yield that
\[
\int_T^{2T}\|\cA^{-l}z_{tt}\|^2  d\t   \leq  C  e^{-\ga T}\left(
\|z_t(0)\|^2_{-1}+ \|\g z(0)\|^2\right) 
 +C (1+ T) \int_0^{2T}\left[ \| z\|^2+ 
\|\cA^{-l} z_t\|^2 
\right]d\t.
\]
Therefore we obtain that 
\begin{equation}\label{qst-w-est}
|V U_1-VU_2|^2_{W_T}\le q_{T} | U_1- U_2|^2_{W_T} +C_T
\left[ n_T^2(U_1-U_2) +n_T^2(VU_1-VU_2)\right]
\end{equation}
for every $ U_1,U_2\in {\Ac}_T$, where
$q_T= C e^{-\gamma T}$ and 
 the seminorm $n_T(U)$ has the form
\[
n^2_T(U) \equiv \int_0^{T}\left[ \| u\|^2+ 
\|\cA^{-l} u_t\|^2 
\right]d\t~~\mbox{for}~~ U=\{u(t)\}\in  W_T. 
\]
One can see that this seminorm is compact on $W_T$.
Therefore  we can choose $T\ge 1$ such that $q_T<1$ in (\ref{qst-w-est})
 and apply Theorem 2.15\cite{ChuLas}
to conclude that   $\Ac_T$   has a finite fractal dimension in $W_T$.
One can also see that
$\Ac=\{ (u(t);u_t(t))_{t=s}\, :\, u(\cdot)\in \Ac_T\} $
does not depend on $s$. Therefore the fractal    
dimension of  ${\Ac}$ is finite in the space 
$\tilde\cH= H^{1}_0(\O)\times  H^{-1}(\O)$. 
By interpolation argument it follows from (\ref{reg-new}) and 
(\ref{dif-bnd}) that $S(t)\big|_\Ac$
is a H\"{o}lder continuous mapping from $\tilde \cH$ into $\cH_r$ 
for each $t>0$.
Since  ${\rm dim}_f^{\tilde \cH}\Ac<\infty$,
this implies that ${\rm dim}_f^{\cH_r}\Ac$ is finite.
\par

\subsection{Attractor in the energy space. Non-supercritical case}
In this section we deal with the attractor in the 
strong topology of the energy space which we understand 
in the standard sense (see, e.g., \cite{BabinVishik, Chueshov,Hale,Temam}).
Namely,
the \textit{global attractor}  of the evolution semigroup  $S(t)$
is defined as a bounded closed  set $\Ac\subset \cH$
which is strictly invariant ($S(t)\Ac=\Ac$ for all $t>0$) and  uniformly  attracts
all other bounded  sets:
$$
\lim_{t\to\8} \sup\{{\rm dist}_\cH(S(t)y,\Ac):\ y\in B\} = 0
\quad\mbox{for any bounded  set $B$ in $\cH$.}
$$
\par
Since $\cH=H^1_0(\O)\times L_2(\O)$ in the 
non-supercritical case, Theorem \ref{th:ak} implies the existence
of a compact set in $\cH$ which attracts bounded sets
 in the strong topology.
This leads to the following  assertion.

\begin{theorem}\label{th:attractor}
Let Assumptions~\ref{A1} and \ref{A:dis} be in force.
Assume also that $\phi(s)$ is strictly positive
(i.e., $\phi(s)>0$  for all $s\in\R_+$) 
and $f'(s)\ge -c$ for all $s\in\R$ in 
the non-supercritical case (when the bounds in (\ref{f-supercrit})
are not valid). 
Then the evolution semigroup $S(t)$ possesses 
a compact global attractor $\Ac$ in $\cH$.
This attractor $\Ac$ coincides with the partially strong attractor 
given by Theorem~\ref{th1:attractor} and thus 
(i)~ $\mathfrak{A}\subset \cH_1= 
[H^2\cap H^1_0](\O)\times H^{1}_0(\O)$; (ii)~the 
relation in \eqref{reg-new}
hold;
(iii)~$\Ac=\mathbb{M}_+(\cN)$, where  $\cN$  is the set of equilibria 
(see (\ref{str-reg})). Moreover,
 we have that 
\begin{equation}\label{grad-cnv}
{\rm dist}_\cH(y,\cN)\to 0 ~~ as~ t\to\infty~ for ~any ~y\in\cH.
\end{equation}
If in addition we assume  Assumption~\ref{A-super}(ii), then 
~$\Ac$
 has a finite  fractal dimension in
the space   
 $\cH_r =
[H^{1+r}\cap H^1_0](\O)\times H^{r}(\O)$
 for every $r<1$.
\end{theorem}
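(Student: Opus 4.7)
The plan is to assemble the compact global attractor in the strong topology of $\cH$ from the three ingredients already available: strong continuity and dissipativity of $S(t)$ on $\cH$, plus the asymptotic compactness furnished by Theorem~\ref{th:ak}. Since in the non-supercritical regime $\cH=H^1_0(\O)\times L_2(\O)$, and the extra assumption $f'(s)\ge -c$ is automatic whenever~\eqref{f-supercrit} holds, the hypotheses effectively match those of Theorem~\ref{th:ak}, while the partially strong and strong topologies on $\cH$ coincide because $H^1_0(\O)\subset L_{p+1}(\O)$ in this regime.

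First I would record that by Theorem~\ref{th:ak} the set $\sK\subset \cH_1=[H^2\cap H^1_0](\O)\times H^1_0(\O)$ exponentially attracts each bounded $B\subset \cH$ in the $H^1_0(\O)\times H^1_0(\O)$-norm. Because the embedding $H^1_0(\O)\times H^1_0(\O)\hookrightarrow H^1_0(\O)\times L_2(\O)=\cH$ is continuous, the same attraction holds in the $\cH$-norm, and moreover $\sK$ is compact in $\cH$ thanks to the compact embeddings $H^2\subset\subset H^1_0$ and $H^1_0\subset\subset L_2$. Combined with the bounded absorbing set from Proposition~\ref{pr:dis} and the local Lipschitz continuity of $S(t)$ on $\cH$ provided by Proposition~\ref{pr:gener}(B), the standard abstract existence theorem for dissipative dynamical systems (see, e.g., \cite{BabinVishik,Temam}) produces a compact global attractor $\Ac\subset \cH$ in the strong topology.

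Next I would identify this $\Ac$ with the partially strong attractor from Theorem~\ref{th1:attractor}. Since $H^1_0(\O)\hookrightarrow L_{p+1}(\O)$ continuously in the non-supercritical case, strong convergence in $H^1_0(\O)$ already implies strong convergence in $L_{p+1}(\O)$; Definition~\ref{de:ss-top} therefore reduces to strong convergence in $\cH$, so that the partially strong attractor is closed, strictly invariant and strongly attracting. By uniqueness of the strong global attractor the two coincide, and properties~(i)--(iii) together with the bound~\eqref{reg-new} transfer verbatim from Theorem~\ref{th1:attractor}; under the additional Assumption~\ref{A-super}(ii) the finite fractal dimension of $\Ac$ in $\cH_r$ follows directly from Theorem~\ref{th:dim}.

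Finally, the convergence $\mathrm{dist}_\cH(S(t)y,\cN)\to 0$ for every $y\in\cH$ is the classical LaSalle-type consequence of $(S(t),\cH)$ being a gradient system (Remark~\ref{re:gradient}) with a compact global attractor: the $\omega$-limit set $\omega(y)$ is a nonempty compact strictly invariant subset of $\Ac$ on which the strict Lyapunov function $\cE$ must be constant, forcing $\omega(y)\subset\cN$, after which compactness of $\omega(y)$ and the absorbing property give the distance convergence. The only delicate step is really Step~2, namely checking that the attraction furnished by Theorem~\ref{th:ak} produces a genuine strong compact attractor in $\cH$ when combined with the Lipschitz continuity of Proposition~\ref{pr:gener}(B); the identification with the partially strong attractor and the gradient argument for convergence to $\cN$ are then routine applications of the general theory.
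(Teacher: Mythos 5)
Your proposal is correct and follows essentially the same route as the paper: the paper likewise obtains the strong attractor by observing that in the non-supercritical case the partially strong topology coincides with the strong one (so Theorem~\ref{th:ak} gives a compact attracting set in $\cH$ and Theorems~\ref{th1:attractor} and \ref{th:dim} supply the attractor, its regularity, and its finite dimension), and then derives \eqref{grad-cnv} from the gradient structure of Remark~\ref{re:gradient} via standard gradient-system theory. Your extra step of re-deriving existence from dissipativity, continuity, and the compact attracting set and then invoking uniqueness to identify it with the partially strong attractor is just a more explicit version of the paper's one-line application of the earlier theorems.
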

\begin{proof}
We apply Theorems~\ref{th1:attractor} and \ref{th:dim}.
To obtain (\ref{grad-cnv}) we only note that by  Remark~\ref{re:gradient}  the semigroup $S(t)$ is
 gradient  on the {\it whole} space $\cH$. Thus 
the standard results on gradient systems 
 (see, e.g., \cite{BabinVishik,Chueshov,Temam}) 
lead to the conclusion in (\ref{grad-cnv}).
\end{proof}
Under additional hypotheses we can establish other dynamical properties 
of the system under the consideration. We impose now the following
set of requirements.
\begin{assumption}\label{A3-crit}
{\rm We assume that $\phi\in C^2(\R_+)$ is a  nondecreasing 
function ($\phi'(s)\ge 0$ for $s\ge 0$),  $f'(s)\ge -c$
for some $c\ge 0$, 
and one of the following requirements
fulfills:
 \begin{itemize}
    \item[{\bf (a)}]
  either $f$ is subcritical:
  either $d\le2$ or (\ref{f-crit}) holds with $p< p_*\equiv (d+2)(d-2)^{-1}$, 
 $d\ge 3$;
 \item[{\bf (b)}]
or else  $3\le d\le 6$, 
 $f\in C^2(\R)$ is critical, i.e.,  
\begin{equation*}%\label{f-crit0-2}
|f''(u)|\le C\left(1+|u|^{p_*-2}\right),~~~u\in\R,~~~ p_*= (d+2)(d-2)^{-1}.  
\end{equation*}
\end{itemize} 
}
\end{assumption}
Our second main result  in this section is the following theorem.
\begin{theorem}\label{th:exp-det}
Let Assumptions~\ref{A1}(ii), \ref{A-super}, and \ref{A3-crit} 
 be in force.
Then 
\begin{enumerate}
\item[{\bf (1)}] Any trajectory $\gamma=\{ (u(t); u_t(t)): t\in \R\}$ from the attractor $\Ac$ given by Theorem~\ref{th:attractor}
possesses the properties 
\begin{equation}\label{u-smth}
(u; u_t;u_{tt})\in L_\infty (\R; [H^2\cap H^1_0](\O)\times H^1_0(\O)\times L_2(\O))
\end{equation}
and there is $R>0$ such that 
\begin{equation}\label{u-smth2}
\sup_{\ga \subset \Ac}\sup_{t\in\R}\left( \|\D u(t)\|^2
+\|\g u_t(t)\|^2+ \|u_{tt}(t)\|^2\right)\le R^2.
\end{equation} 
\item[{\bf (2)}] There exists a 
fractal exponential attractor $\Ac_{exp}$  in $\cH$.
\item[{\bf (3)}]
 Let   ${\cal L} =  \{ l_j : j= 1,...,N\}$ be a finite set of
functionals on $H^1_0(\O)$ and  
\[
\epsilon_{\cal L}=\epsilon_{\cal L}(H^1_0(\O), L_2(\O))\equiv
\sup\left\{ \|u\|\,:\, u\in H^1_0(\O), ~~l_j(u)=0,
j=1,...,N,
\|u\|_1\le1\right\}
\]
be the corresponding completeness
defect.; 
Then  there exists $\eps_0>0$ such that
under the condition $\epsilon_\cL\le \eps_0$
 the set $\cL$ is (asymptotically)
determining in the sense   that the property
\[
\lim_{t\to\infty}\max_{j} \int_t^{t+1}|l_j(u^1(s)-u^2(s))|^2ds=0
\]
implies that $\lim_{t\to\infty} \|S(t)y_1-S(t)y_2\|_\cH=0$.
Here above $S(t)y_i=(u^i(t); \.u^i(t))$, $i=1,2$.
\end{enumerate}
\end{theorem}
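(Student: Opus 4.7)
The plan is to reduce all three conclusions to a single strong quasi-stability estimate in the energy topology, valid on the absorbing ball, and then invoke the abstract theorems on quasi-stable systems in \cite[Sections~7.9, 8.3]{cl-book}. Since Assumption~\ref{A3-crit} rules out the supercritical regime, the partially strong topology on $\cH$ coincides with the strong energy topology, and Theorem~\ref{th:attractor} has already produced a compact global attractor $\Ac$; we only need to upgrade the weak quasi-stability of Proposition~\ref{pr:w-q-st} to a strong one with the compact remainder $\|z\|^2$ instead of $\|\cA^{-l}\.z\|^2+\|z\|^2$.

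The central step is to establish that for each $R>0$ there exist $a_R,b_R,\ga_R>0$ such that any two weak solutions $u^1,u^2$ with initial data in the absorbing set satisfy
\begin{equation*}
\|\.z(t)\|^2 + \|\g z(t)\|^2 \le a_R e^{-\ga_R t}\bigl(\|\.z(0)\|^2 + \|\g z(0)\|^2\bigr) + b_R \sup_{s\in[0,t]} \|z(s)\|^2, \qquad z=u^1-u^2.
\end{equation*}
I would derive this by multiplying (\ref{abs-dif}) by $\.z$ directly, rather than by $\cA^{-1}\.z$ as in Proposition~\ref{pr:w-q-st}. This produces
\begin{equation*}
\frac{d}{dt}\Bigl[\tfrac12\|\.z\|^2 + \tfrac14\phi_{12}(t)\|\g z\|^2\Bigr] + \tfrac12\sigma_{12}(t)\|\g \.z\|^2 + (G(u^1,u^2;t),\.z) = \tfrac14\phi_{12}'(t)\|\g z\|^2;
\end{equation*}
the $\sigma$- and $\phi$-difference components of $G$ are absorbed into $\eps\|\g \.z\|^2+C_\eps\|z\|^2$ as in Lemma~\ref{le:z-mult}, using the finiteness of $\int_0^\infty \|\g \.u^i\|^2\,d\tau$ from Remark~\ref{re:e+}. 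The decisive term $(f(u^1)-f(u^2),\.z)$ is then treated by regime: in the subcritical case \textbf{(a)} the embedding $H^{1-\delta}\subset L_{p+1}$ for some $\delta>0$ yields $\|f(u^1)-f(u^2)\|_{-\delta}\le C_R\|\g z\|$ and Young plus interpolation gives the compact bound; in the critical case \textbf{(b)} I integrate by parts in time,
\begin{equation*}
\int_s^t (f(u^1)-f(u^2),\.z)\,d\tau = \bigl[(f(u^1)-f(u^2),z)\bigr]_s^t - \int_s^t \bigl(f'(u^1)\.z + (f'(u^1)-f'(u^2))\.u^2,\; z\bigr)\,d\tau,
\end{equation*}
and bound the last integrand via the $C^2$ bound $|f''(u)|\le C(1+|u|^{p_*-2})$, the Sobolev exponents that cooperate precisely when $d\le 6$, and the extra regularity $\.u^i\in L_\infty(a,T;H^1_0)$ from (\ref{smoth-prop}). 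The monotonicity $\phi'\ge 0$ makes $\phi_{12}'(t)$ manageable through $\|\g \.u^i\|^2\|\g z\|^2$, and uniform positivity of $\sigma$ and $\phi$ on the absorbing ball together with a Gronwall argument exploiting $L^2$-in-time dissipation yields the claimed exponential decay modulo $\sup_{s\in[0,t]}\|z(s)\|^2$.

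With the stabilizability estimate in hand, conclusions \textbf{(1)}--\textbf{(3)} follow by the general machinery. For \textbf{(1)}, I would apply the estimate to $z(\cdot)=u(\cdot+h)-u(\cdot)$ along a full trajectory in $\Ac$, divide by $h^2$ and pass to $h\to 0$, obtaining $\.u,\:u\in L_\infty(\R;L_2(\O))$ and $\g \.u\in L_\infty(\R;L_2(\O))$; the $\D u$ bound then comes from (\ref{main_eq}) itself. For \textbf{(2)}, the stabilizability estimate combined with the Hölder continuity from Proposition~\ref{pr:gener}\textbf{(B)} verifies the hypotheses of the abstract exponential-attractor construction of \cite[Section~7.9]{cl-book}. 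For \textbf{(3)}, substituting the completeness-defect inequality $\|z\|^2\le c_\cL \max_j |l_j(z)|^2 + \eps_\cL^2\|\g z\|^2$ into the stabilizability estimate and choosing $\eps_\cL$ small enough so that the $\eps_\cL^2\|\g z\|^2$ term is absorbed, a Barbalat-type argument converts the hypothesis $\max_j\int_t^{t+1}|l_j(u^1-u^2)|^2\,ds\to 0$ into $\|S(t)y_1-S(t)y_2\|_\cH\to 0$.

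The main obstacle is the critical-case estimate for $(f(u^1)-f(u^2),\.z)$: one must exploit the $C^2$ bound on $f$ and the regularity (\ref{smoth-prop}) of $\.u^i$ without losing the exponential decay of $\|\.z\|^2+\|\g z\|^2$, and the dimension restriction $d\le 6$ is what makes the relevant Sobolev exponents admissible. Everything else is a largely mechanical application of the quasi-stability framework of \cite{cl-book}.
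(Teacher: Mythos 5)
Your overall strategy coincides with the paper's: the heart of the matter is a strong quasi-stability estimate obtained by multiplying the difference equation (\ref{abs-dif}) by $z_t$, with the critical source term handled exactly as you propose (rewriting $(f(u^1)-f(u^2),z_t)$ as a total time derivative of the quadratic form $\int_0^1\!\int_\O f'(u^2+\la z)|z|^2$ plus a remainder controlled through $|f''(u)|\le C(1+|u|^{p_*-2})$, $H^1\subset L_{p_*+1}$ for $d\le 6$, and the square-integrable coefficients $\|\g u^i_t\|$), after which (1)--(3) follow from the quasi-stability machinery of \cite[Sections 7.9, 8.9]{cl-book} just as in the paper. However, there is a genuine gap in your treatment of the nonlocal stiffness-difference term.

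The term in question is $H_2(t)=\widetilde\phi_{12}(t)\,(\g (u^1+u^2),\g z)\,(\g (u^1+u^2),\g z_t)$. You claim it is absorbed into $\eps\|\g z_t\|^2+C_\eps\|z\|^2$, but to trade $(\g(u^1+u^2),\g z)$ for a multiple of $\|z\|$ one needs a uniform bound on $\|\D(u^1+u^2)\|$, which is not available where the estimate must hold: the quasi-stability estimate is needed on the forward-invariant absorbing set $\sB_0$ (for the exponential attractor) and for arbitrary energy-bounded solutions (for determining functionals), and $\sB_0$ is bounded only in $H^1_0(\O)\times H^1_0(\O)$ --- the equation does not smooth $u$ itself into $H^2$. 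The only bound available from energy estimates is $|H_2|\le C_R\|\g z\|\,\|\g z_t\|$, which after Young leaves $C_\eps\|\g z\|^2$ with a coefficient that is neither small nor integrable in time, so it cannot be absorbed or handled by Gronwall and the exponential decay of $\|z_t\|^2+\|\g z\|^2$ is lost. The remedy --- and the place where Assumption~\ref{A3-crit} on $\phi$ ($\phi\in C^2$, $\phi'\ge 0$) is really used --- is the same device you already apply to the source term: write $H_2=\frac12\frac{d}{dt}\bigl[\widetilde\phi_{12}(t)|(\g(u^1+u^2),\g z)|^2\bigr]+\hat H_2$ with $|\hat H_2|\le C_R\bigl(\|\g u^1_t\|+\|\g u^2_t\|\bigr)\|\g z\|^2$, and include the nonnegative (by $\phi'\ge 0$) quadratic form in the modified energy functional; your proposal instead attributes the role of monotonicity to controlling $\phi_{12}'(t)$, which only requires boundedness of $\phi'$. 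With this correction the rest of your argument goes through as in the paper; a minor further point is that for conclusion (2) the abstract exponential-attractor theorem also needs H\"older continuity of $t\mapsto S(t)y$ in $\cH$, which comes from the bound on $\int_t^{t+1}\|u_{tt}\|^2$ in (\ref{smoth-prop}) rather than from Proposition~\ref{pr:gener}(B).
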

We recall (see, e.g., \cite{EFNT94} and
 also \cite{Chueshov,ChuLas,cl-book})
that a
 compact set $\Ac_{\rm exp}\subset \cH$
is said to be a  fractal exponential
attractor
for  the dynamical system $(\cH, S(t))$  iff  
$\Ac_{\rm exp}$ is a positively invariant set
of finite fractal dimension in $\cH$ and
for every bounded set $D\subset \cH$ there exist positive constants
$t_D$, $C_D$ and $\gamma_D$ such that
\begin{equation*}%\label{7.3.4}
d_X\{S(t)D\, |\, \Ac_{\rm exp}\}\equiv
\sup_{x\in D} \mbox{dist}\,_\cH  (S(t)x,\, \Ac_{\rm exp})\le C_D\cdot e^{-\gamma_D(t-t_D)},
\quad t\ge t_D.
\end{equation*}
We also mentioned that the notion of determining functionals goes back
to the papers  by Foias and Prodi~\cite{FP68}
and by Ladyzhenskaya~\cite{Lad75} for the 2D Navier-Stokes equations.
For the further  development of the theory  we refer to
 \cite{cjt97} and to the survey \cite{Chu98}, see also
 the references quoted in these publications.
We note that for the first time determining functionals for 
second order (in time) evolution equations with a nonlinear damping was considered in 
\cite{CHuKal2001}, see also a discussion in \cite[Section 8.9]{cl-book}. 
We also refer  to \cite{Chu98} and \cite[Chap.5]{Chueshov} for a description of sets
of functionals with small completeness defect.
\subsubsection*{Proof of Theorem~\ref{th:attractor}}
The main ingredient of the proof 
is some quasi-stability  property of $S(t)$
in the energy space $\cH$
which is stated in the following assertion.

\begin{proposition}[Strong quasi-stability]\label{pr:s-q-st}
Suppose that
 Assumptions~\ref{A1}(ii), \ref{A-super} and \ref{A3-crit} hold. 
Let $u^1(t)$ and $u^2(t)$ be two weak solutions
such that $\|(u^i(0); u^i_t(0))\|_\cH\le R$, $i=1,2$, then
their difference $z(t)=u^1(t)-u^2(t)$ satisfies the relation
\begin{equation}
\label{s-stab-str}
\|z_t(t)\|^2+ \|\g z(t)\|^2  \leq  a_R\left(
 \|z_t(0)\|^2+ \|\g z(0)\|^2\right) e^{-\ga_Rt}
 +b_R\int_0^t e^{-\ga_R(t-\t)} \| z(\t)\|^2 
d\t, 
\end{equation}
where $a_R, b_R,\ga_R$ are positive constants.
\end{proposition}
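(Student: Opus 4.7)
The proof parallels the weak quasi-stability Proposition~\ref{pr:w-q-st}, but replaces the multiplier $\cA^{-1}z_t$ by the strong multiplier $z_t$. Under the non-supercritical Assumption~\ref{A3-crit} we have the Lipschitz-type bound $\|f(u^1)-f(u^2)\|_{-1}\le C_R\|\g z\|$: in the subcritical case $p<p_*$ this is the standard estimate \eqref{f-est-h1}, whereas in the critical case $p=p_*$ with $d\le 6$ it follows from the Sobolev embedding $H^1(\O)\subset L^{2d/(d-2)}(\O)$ together with the $C^2$-structure of $f$ assumed in Assumption~\ref{A3-crit}(b).

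The starting point is the difference equation \eqref{abs-dif}, multiplied by $z_t$ in $L_2(\O)$. This operation is rigorous since, by \eqref{basic-bnd} and the above Lipschitz bound, $z_t\in L_2(0,T;H^1_0(\O))$ and $G(u^1,u^2;t)$ belongs to $L_2(0,T;H^{-1}(\O))$. Rewriting $\phi_{12}(t)(\g z,\g z_t)$ as a time derivative yields
\[
\frac{d}{dt}\cE_z(t)+\tfrac12\s_{12}(t)\|\g z_t\|^2=\tfrac14\dot\phi_{12}(t)\|\g z\|^2-(G,z_t),\quad
\cE_z(t):=\tfrac12\|z_t\|^2+\tfrac14\phi_{12}(t)\|\g z\|^2.
\]
Using the mean-value identities $|\s_1-\s_2|,|\phi_1-\phi_2|\le C_R\|\g z\|$ and $|\dot\phi_{12}(t)|\le C_R d_{12}^{1/2}(t)$ with $d_{12}(t)=\|\g u^1_t\|^2+\|\g u^2_t\|^2\in L_1(\R_+)$ (by \eqref{R-bnd} from Remark~\ref{re:e+}), together with the Lipschitz bound on $f$, the strict positivity $\s_{12}(t)\ge 2\s_R>0$ and Young's inequality, one arrives at
\[
\frac{d}{dt}\cE_z(t)+\s_R\|\g z_t\|^2\le C_R\bigl(1+d_{12}(t)\bigr)\|\g z\|^2.
\]

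To supply dissipation for $\|\g z\|^2$ itself, introduce the Lyapunov functional $V(t)=\cE_z(t)+\eta\bigl[(z,z_t)+\tfrac14\s_{12}(t)\|\g z\|^2\bigr]$. A direct computation, in which the cross-terms $\s_{12}(t)(\g z,\g z_t)$ cancel, shows that for $\eta>0$ small enough $V$ is equivalent to $\|z_t\|^2+\|\g z\|^2$ and that differentiating $(z,z_t)$ produces the coercive term $-\tfrac{\eta}{2}\phi_{12}(t)\|\g z\|^2\le-\eta\phi_R\|\g z\|^2$ (the lower bound $\phi_R>0$ follows from the combination of $\phi(s)>0$ with $\phi$ nondecreasing under Assumption~\ref{A3-crit}, giving $\phi_{12}(t)\ge 2\phi(0)>0$ uniformly). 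The lower-order nonlinear cross term $(f(u^1)-f(u^2),z)$ is absorbed by Lemma~\ref{le:f-coerc} into a harmless $C_R\|z\|^2$ since $f'\ge -c$. Using Poincar\'e to absorb $\eta\|z_t\|^2$ into $\s_R\|\g z_t\|^2$, we arrive at the differential inequality
\[
\frac{d}{dt}V+\ga_R V\le C_R d_{12}(t)V+C_R\|z\|^2
\]
for some $\ga_R>0$. Since $\int_0^\infty d_{12}(\tau)\,d\tau\le M_R$, a Gronwall-type argument (as in the final step of the proof of Proposition~\ref{pr:w-q-st}) yields the asserted estimate~\eqref{s-stab-str}.

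The principal technical difficulty is the absorption of the constant $C_R\|\g z\|^2$ arising from the Young splitting of the $\|\g z\|\cdot\|\g z_t\|$-products in $(G,z_t)$ into the coercive term $\eta\phi_R\|\g z\|^2$. In Proposition~\ref{pr:w-q-st} the soft multiplier $\cA^{-1}z_t$ circumvents this issue by reducing such products, via integration by parts, to $\|z\|^2$ and $\|\cA^{-l}z_t\|^2$-type expressions, at the price of using an $H^2$-bound for $u^i$. Here no $H^2$ information is presupposed, and the absorption is accomplished only thanks to the uniform strict positivity $\phi_R\ge\phi(0)>0$ ensured by Assumptions~\ref{A1}(i) and \ref{A3-crit}, via a calibrated choice of $\eta$ in terms of $\phi_R$, $\s_R$ and the Poincar\'e constant $\la_1$.
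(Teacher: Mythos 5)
Your overall architecture (multiply \eqref{abs-dif} by $z_t$ to get \eqref{ener-dif}, add $\eta$ times the multiplier-$z$ identity of Lemma~\ref{le:z-mult}, build a Lyapunov functional equivalent to $\|z_t\|^2+\|\g z\|^2$, and close with Gronwall using $\int_0^\infty d_{12}\,d\t<\infty$ from \eqref{R-bnd}) is exactly the paper's. But there is a genuine gap at the step you yourself flag as ``the principal technical difficulty,'' and the fix you propose does not work. When you estimate the terms $H_2(t)=\widetilde\phi_{12}(t)(\g(u^1+u^2),\g z)(\g(u^1+u^2),\g z_t)$ and (in the critical case $p=p_*$) $H_3(t)=(f(u^1)-f(u^2),z_t)$ by Young's inequality, you produce $\eps\|\g z_t\|^2+C_{R,\eps}\|\g z\|^2$ where $C_{R,\eps}\sim C_R^2/\eps$ is a \emph{fixed large} constant, not multiplied by $d_{12}(t)$ and not small. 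To absorb it you need $\eta\,\phi_R>C_{R,\eps}$, but $\eta$ is forced to be small (both for the equivalence of $V$ with $\|z_t\|^2+\|\g z\|^2$ and to absorb $-\eta\|z_t\|^2$ into $\s_R\|\g z_t\|^2$ via Poincar\'e). No ``calibrated choice of $\eta$'' reconciles these two constraints; with admissible $\eta$ the net coefficient of $\|\g z\|^2$ in $\frac{d}{dt}V$ is positive and no exponential decay results. (In the strictly subcritical case $H_3$ is harmless because $f$ gains $\delta>0$ derivatives, so interpolation puts only an $\eps\|\g z\|^2$ on the right; but $H_2$ has this defect in every case, and $H_3$ has it when $p=p_*$.)

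The paper's resolution, which your argument is missing, is to move $H_2$ and $H_3$ \emph{under the time derivative}: one writes $H_2=\frac12\frac{d}{dt}\bigl[\widetilde\phi_{12}(t)|(\g(u^1+u^2),\g z)|^2\bigr]+\hat H_2$ and, in the critical case, $H_3=\frac12\frac{d}{dt}\bigl[\int_0^1\!\!\int_\O f'(u^2+\la z)|z|^2\,d\la\,dx\bigr]+\hat H_3$, where the remainders satisfy $|\hat H_2|+|\hat H_3|\le C_R\sqrt{d_{12}(t)}\,\|\g z\|^2\le(\eps+C_\eps d_{12}(t))\|\g z\|^2$ --- so the part of the $\|\g z\|^2$-coefficient that is not integrable in time is an \emph{arbitrarily small} $\eps$, which the coercive term $\frac{\eta}{2}\phi_{12}(t)\|\g z\|^2$ can absorb. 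The exact-derivative pieces are then incorporated into an augmented energy $E_*$, which remains equivalent to $\|z_t\|^2+\|\g z\|^2$ (up to $C\|z\|^2$) precisely because $\phi'\ge0$ makes $\widetilde\phi_{12}\ge0$ and $f'\ge-c$ controls the $f'$-term from below. This is why Assumption~\ref{A3-crit} demands $\phi\in C^2$ nondecreasing and, in the critical case, $f\in C^2$ with the stated growth of $f''$ (needed to bound $\hat H_3$ via H\"older and $H^1\subset L_{p_*+1}$); your proof never uses these hypotheses, which is a symptom of the gap. You should rework the estimate of $(G,z_t)$ along these lines.
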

\begin{proof}
As a starting point we consider the energy type relation (\ref{ener-dif})
for the difference $z$ (which we already
use in the proof of the second part of Proposition~\ref{pr:gener}) 
and estimate
the term
\[
 G(t)\equiv(G(u^1,u^2; t), z_t)=H_1(t)+ H_2(t)+H_3(t)
\]
given by  (\ref{G-zt-new}) using the additional hypotheses imposed.
One can see that 
\[
|H_1(t)|\le  \eps \|\g z_t\|^2+  C_{R,\eps} (\|\g u^1_t\|^2+\| \g u^2_t\|^2) \|\g z\|^2.
\]
Here and below we  use the fact that 
$\|u^i_t(t)\|^2+ \|\g u^i(t)\|^2  \leq  C_R$ for all $t\ge 0$ 
(see (\ref{R-bnd})).
\par
We also have that
\begin{eqnarray*}
H_2(t)&= & \hf \frac{d}{dt}\left[ \widetilde\phi_{12}(t) 
 |(\g (u^1+u^2),\g z)|^2 \right] +\hat H_2(t),
\end{eqnarray*}
where 
$|\hat H_2(t))|\le   C_{R} (\|\g u^1_t\|+\| \g u^2_t\|) \|\g z\|^2$.
\par
If  $f$ is subcritical, i.e., Assumption~\ref{A3-crit}(a) holds,
 then the estimate for
$H_3(t)$ is direct:
\[
|H_3(t)|\le C_R\|\g z_t\|\|z\|_{1-\delta}\le \eps\left( \|\g z_t\|^2+ \|\g z\|^2\right) +
  C_{R,\eps} \| z\|^2
\]
for some $\delta>0$ and for any $\eps>0$. Therefore in 
the argument below we concentrate on the critical case described
in Assumption~\ref{A3-crit}(b).  In this case
 we have that 
\begin{eqnarray*}
H_3(t)&= & \hf \frac{d}{dt}\left[\int_0^1\int_\O f'(u^2+\la (u^1-u^2))
|z|^2 d\la dx \right] +\hat H_3(t),
\end{eqnarray*}
where
\[
\hat H_3(t)=- \hf \int_0^1\int_\O f''(u^2+\la (u^1-u^2))
(u^2_t+\la (u^1_t-u^2_t))|z|^2 d\la dx. 
\]
By the growth condition of $f''$ we have that 
\[
|\hat H_3(t)|\le C \int_\O \left[ 1+ |u^1|^{p_*-2} +|u^2|^{p_*-2}\right]
(|u^1_t|+|u^2_t|)|z|^2  dx. 
\]
Therefore the  H\"older inequality and the Sobolev embedding  
$H^1(\O)\subset L_{p_*+1}(\O)$ imply that 
\begin{eqnarray*}
|\hat H_3(t)| &\le& C \left[ 1+ \|u^1\|_{L_{p_*+1}(\O)}^{p_*-2} +\|u^2\|_{L_{p_*+1}(\O)}^{p_*-2}\right]
\left[\|u^1_t\|_{L_{p_*+1}(\O)}+\|u^2_t\|_{L_{p_*+1}(\O)}\right]
\|z\|_{L_{p_*+1}(\O)}^2\\
 &\le& C_R \left[\|\g u^1_t\|+\|\g u^2_t\|\right]
\|\g z\|^2.
\end{eqnarray*}
Now we introduce the energy type functional
\begin{eqnarray*}
E_*(t) &= & \frac12 \|z_t\|^2+
\frac14 \phi_{12}(t) \|\g z\|^2 \\
& &+\; \hf \left[\int_0^1\int_\O f'(u^2+\la (u^1-u^2))
|z|^2 d\la dx + \widetilde\phi_{12}(t) 
 |(\g (u^1+u^2),\g z)|^2 \right].
\end{eqnarray*}
From (\ref{ener-dif}) and the calculations above we obviously have that 
\begin{eqnarray*}
\frac{d}{dt}E_*(t) +\left[\hf\s_{12}(t)-\eps\right]\|\g z_t\|^2
\le C_{R,\eps}\left[ d_{12}(t)+\sqrt{d_{12}(t)}\right] \|\g z\|^2,
\end{eqnarray*}
where
$d_{12}(t)=\|\g u^1_t(t)\|^2+\|\g u^2_t(t)\|^2$.
Therefore using Lemma~\ref{le:z-mult}
we obtain that the function
\[
W_*(t)= E_*(t)+\eta \left[ (z,z_t)+\frac14
\s_{12}(t) \|\g z\|^2\right],~~\eta>0,
\]
satisfies the relation
\begin{eqnarray*}
\frac{d}{dt}W_*(t) +\left[\hf\s_{12}(t)-\eps\right]\|\g z_t\|^2   
- \eta  \|z_t\|^2+ \eta \left[
\frac12 \phi_{12}(t) \|\g z\|^2  + \widetilde\phi_{12}(t) 
 |(\g (u^1+u^2),\g z)|^2  \right]\\
+\; \eta  \int_0^1\int_\O f'(u^2+\la (u^1-u^2))
|z|^2 d\la dx  
\le \eps \|\g z\|^2+C_{R,\eps} d_{12}(t) \|\g z\|^2.
\end{eqnarray*}
Therefore, if we introduce $\tilde W(t)=W_*(t)+C\|z(t)\|^2$
 with appropriate $C>0$ and  with $\eta>0$ small enough, then we obtain 
that
\[
a_R \left( \|z_t(t)\|^2 +\|\g z(t)\|^2 \right) \le \tilde W(t)
\le b_R  \left( \|z_t(t)\|^2 +\|\g z(t)\|^2 \right)
\]
 and
\[
\frac{d}{dt}\tilde W(t)+c_R\tilde W(t)\le C_R d_{12}(t)\|\g z\|^2+C\|z(t)\|^2
\]
with positive constants.
Thus  the finiteness of  the integral in (\ref{R-bnd}) and 
the standard Gronwall's argument implies the result in (\ref{s-stab-str})
in the critical case. In the subcritical case we use the same argument
but for the functional $E_*$ without the term containing $f'$. 
\end{proof}
{\bf Completion of the proof of Theorem~\ref{th:attractor}:} 
 Proposition~\ref{pr:s-q-st} means that the semigroup
 $S(t)$ is quasi-stable on the absorbing set $\sB_0$ defined in
Remark~\ref{re:abs-inv}  
in the sense of Definition 7.9.2~\cite{cl-book}.
Therefore
to obtain the result on regularity stated in 
(\ref{u-smth}) and
(\ref{u-smth2}) we first apply  Theorem~7.9.8~\cite{cl-book}
which gives us  that 
\[
\sup_{t\in\R}\left( 
\|\g u_t(t)\|^2+ \|u_{tt}(t)\|^2\right)\le C_\Ac 
~~\mbox{ 
for any
trajectory $\gamma=\{ (u(t); u_t(t)): t\in \R\}\subset\Ac$}. 
\]
Applying (\ref{reg-new}) we obtain (\ref{u-smth}) and
(\ref{u-smth2}).
\par 
By (\ref{smoth-prop}) any weak solution $u(t)$ 
possesses the property
\[
\int_t^{t+1}\|u_{tt}(\tau)\|^2d\t \le C_{R,T}
~~\mbox{for}~~ t\in [0,T],~~\forall\, T >0,
\]
provided $(u_0;u_1)\in S(1)\sB_0$,
where $\sB_0$ is
 the absorbing set  defined in
Remark~\ref{re:abs-inv}. 
This implies that $t\mapsto S(t)y$ is a $1/2$-H\"older continuous function
with values in $\cH$ for every $y\in S(1)\sB_0$.
Therefore the existence of a
 fractal exponential attractor follows from
 Theorem~7.9.9~\cite{cl-book}. 
\par   
To prove the statement concerning determining functionals 
we use the same idea as in the proof of 
 Theorem~8.9.3~\cite{cl-book}.

\end{document}